\newtheorem*{corollary*}{Corollary}
\newtheorem{theorem}{Theorem}[section]
\newtheorem{lemma}[theorem]{Lemma}
\newtheorem{proposition}[theorem]{Proposition}
\newtheorem*{proposition*}{Proposition}
\newtheorem{conjecture}[theorem]{Conjecture}
\newtheorem*{claim*}{Claim}
\theoremstyle{definition}
\newtheorem{definition}[theorem]{Definition}
\newtheorem{remark}[theorem]{Remark}
\newtheorem{example}[theorem]{Example}
\newtheorem{definitiontheorem}[theorem]{Definition-Theorem}
\theoremstyle{remark}
\numberwithin{equation}{theorem}
\renewcommand{\mod}{\operatorname{mod}}
\newcommand{\End}{\operatorname{End}}
\newcommand{\Hom}{\operatorname{Hom}}
\newcommand{\add}{\operatorname{\mathsf{add}}}
\newcommand{\Ext}{\operatorname{Ext}}
\newcommand{\Tau}{\mathsf{T}}
\newcommand{\tilt}{\operatorname{\mathsf{tilt}}}
\newcommand{\tsilt}{\operatorname{\mathsf{2silt}}}
\newcommand{\stilt}{\operatorname{\mathsf{s\text{-}tilt}}}
\newcommand{\sttilt}{\operatorname{\mathsf{s\tau -tilt}}}
\newcommand{\Z}{{\mathbb{Z}}}
\newcommand{\N}{{\mathbb{N}}}
\newcommand{\Ker}{\operatorname{Ker}}
\newcommand{\ind}{\operatorname{ind}}
\newcommand{\dimvec}{\operatorname{\underline{dim}}}
\begin{document}
\title{Remarks on lengths of maximal green sequences for quivers of type $\tilde{A}_{n,1}$  }
\author{Ryoichi Kase}
\address{Department of Mathematics,
Nara Women's University, Kitauoya-Nishimachi, Nara city, Nara 630-8506, Japan}
\email{r-kase@cc.nara-wu.ac.jp}
\begin{abstract}
A maximal green sequence introduced by B. Keller is a certain sequence of quiver mutations at green vertices.  T.~Br$\ddot{\mathrm{u}}$stle, G.~Dupont and M.~P$\acute{\mathrm{e}}$rotin
showed that for an acyclic quiver, maximal green sequences are realized as maximal paths in the Hasse quiver of the poset of support tilting modules. In \cite{BDP}, they considered
possible lengths of maximal green sequences. In this paper, we calculate possible lengths of maximal green sequences for a quiver of type $A$ or of type $\tilde{A}_{n,1}$ by using theory of tilting mutation.        
\end{abstract}
\maketitle
\section*{Introduction}
A green quiver mutation was introduced by B. Keller \cite{Ke}. It was special class of quiver mutations.
Then maximal green sequences are defined to be maximal sequences of green quiver mutations.   
Let $Q$ be a cluster quiver that is a finite connected quiver without loops or oriented cycles.
 In this setting, green quiver mutations induce a quiver\;(or an oriented graph)
$\overrightarrow{E}(Q)$ which has the cluster exchange graph corresponding to $Q$ introduced in \cite{FZ} as the underlying graph.
 Moreover if $Q$ is acyclic, then a green mutation for $Q$ closely related to theory of tilting mutation. 

The notion of tilting modules for finite dimensional algebras were introduced by B. Brenner and M. C. R. Butler \cite{BB}. 
Since tilting modules control derived equivalence, to obtain many tilting modules is an important problem for representation theory of finite dimensional algebras. 
 Tilting mutation introduced by C. Riedtmann and A. Schofield is an approach to this problem \cite{RS}. It is an operation which gives a new tilting modules from given one by
replacing an indecomposable direct summand. They also introduced a tilting quiver corresponding to tilting mutations. D. Happel and L. Unger defined partial order on the set of basic tilting modules and showed that the tilting quiver coincides
with the Hasse quiver of this poset. 
 The poset of support tilting modules of a finite dimensional path algebra $kQ$ was defined by C. Ingalls and H. Thomas as an extension of the poset of tilting modules \cite{IT}. Then its Hasse quiver is isomorphic to an oriented exchange graph $\overrightarrow{E}(Q)$\;\cite{AIR,BDP}.

For lengths of maximal green sequences, T.~Br$\ddot{\mathrm{u}}$stle, G.~Dupont and M.~P$\acute{\mathrm{e}}$rotin conjectured the following.
\begin{conjecture}\cite{BDP}
If $Q$ is a cluster quiver, then possible lengths of maximal green sequences for $Q$ form an interval in $\Z$.
\end{conjecture}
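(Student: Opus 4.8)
The plan is to realize the interval property as a connectivity statement for ``local moves'' on maximal green sequences. If $Q$ admits no finite maximal green sequence then the set $L(Q)\subseteq\Z$ of possible lengths is empty and there is nothing to prove, so assume $L(Q)\neq\varnothing$. By the realization recalled above, maximal green sequences are exactly the maximal directed paths in the oriented exchange graph $\overrightarrow{E}(Q)$; it is known that all of them begin at the all-green initial seed $s$ and terminate at the all-red seed $t$, so they are precisely the maximal directed $s$-$t$ paths, and the length of such a sequence $\gamma$, written $|\gamma|$, is its number of edges. I will put a graph structure on the set of maximal green sequences whose edges are ``moves,'' and prove that this graph is (i) connected while (ii) each move changes $|\gamma|$ by at most one. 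Granting (i) and (ii), a move-chain from a shortest to a longest sequence realizes every intermediate integer, forcing $L(Q)$ to be an interval.

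\textbf{The moves.} The exchange graph of a cluster algebra carries a canonical two-dimensional cell structure whose $2$-cells are \emph{squares}, arising from pairs of vertices with $b_{ij}=0$ so that the two mutations commute, and \emph{pentagons}, arising from pairs with $|b_{ij}|=1$; moreover it is a structural fact that the exchange complex is simply connected, every closed cycle being generated by these square and pentagon relations. Orienting each $2$-cell by the green-mutation order (equivalently by the $g$-vector grading) makes $\overrightarrow{E}(Q)$ acyclic and endows each $2$-cell with a unique source corner and a unique sink corner: in a square the two directed boundary paths between them both have length $2$, and in a pentagon they have lengths $2$ and $3$. A \emph{move} takes a maximal green sequence that traverses the source-to-sink boundary of a single $2$-cell along one side and replaces that stretch by the other side. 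Because the two sides share endpoints and each respects the orientation, the outcome is again a directed $s$-$t$ path, hence again a maximal green sequence; square-moves preserve $|\gamma|$ and pentagon-moves change it by exactly $\pm1$. This gives (ii).

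\textbf{Connectivity --- the main obstacle.} The heart of the argument is (i): any two maximal green sequences must be joined by finitely many square- and pentagon-moves. I would derive this from simple connectivity of the exchange complex, which guarantees that the two $s$-$t$ paths underlying the given sequences are homotopic through elementary homotopies, each crossing a single square or pentagon. The obstruction is that a generic such homotopy passes through intermediate paths that are \emph{not} directed $s$-$t$ paths, so its steps are not legal moves. The genuine content is therefore a \emph{directed} refinement of simple connectivity: using the height function supplied by the $g$-vector grading, one must rechoose the homotopy so that every elementary step is taken across a $2$-cell entered at its source corner and exited at its sink corner, i.e.\ realised as a move between maximal green sequences. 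I expect this directed-homotopy step to absorb essentially all the difficulty, and to be exactly where the global structure of $Q$ must be controlled --- in particular when $Q$ has oriented cycles, so that no poset of support tilting modules is available and one must work with the wall-and-chamber or scattering-diagram incarnation of $\overrightarrow{E}(Q)$, and when $Q$ has rank-$2$ subquivers with $|b_{ij}|\geq2$, whose local exchange graphs are infinite and contribute no bounded $2$-cell.

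\textbf{Conclusion.} Granting (i), choose maximal green sequences $\gamma_{\min}$ and $\gamma_{\max}$ with $|\gamma_{\min}|=\min L(Q)$ and $|\gamma_{\max}|=\max L(Q)$ and a move-chain $\gamma_{\min}=\gamma_0,\gamma_1,\dots,\gamma_N=\gamma_{\max}$. By (ii) the integers $|\gamma_0|,|\gamma_1|,\dots,|\gamma_N|$ change by at most one at each step, start at $\min L(Q)$ and end at $\max L(Q)$, so they attain every value in between; as every element of $L(Q)$ lies in $[\min L(Q),\max L(Q)]$, we conclude $L(Q)=[\min L(Q),\max L(Q)]\cap\Z$, an interval.
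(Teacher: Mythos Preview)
The statement you are trying to prove is presented in the paper as an open \emph{conjecture} of Br\"ustle--Dupont--P\'erotin; the paper does not prove it in general. What the paper does is verify the conjecture in two special cases, type $A_n$ and type $\tilde{A}_{n,1}$, by explicit computations with the poset $\stilt(\Lambda)$: for each type it determines the minimum and maximum possible lengths and then exhibits, by hand, paths of every intermediate length using reduction to smaller rank (Theorem~\ref{stilting reduction}), slice tilting modules, and induction. There is no general argument in the paper to compare your proposal against.

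As for your proposal itself, it is a strategy sketch rather than a proof, and you identify the gap yourself: step~(i), the connectivity of maximal green sequences under square and pentagon moves, is never established. You write ``Granting (i)'' and ``I expect this directed-homotopy step to absorb essentially all the difficulty''; that is precisely the content of the conjecture, so the argument is circular at that point. There are further technical obstructions you mention but do not resolve: for pairs of vertices with $|b_{ij}|\geq 2$ the rank-two exchange subgraph is infinite and contributes no bounded $2$-cell, so your move set may simply be too small to connect all maximal green sequences; simple connectivity of the exchange complex in the generality you need is not a theorem you can cite; and your conclusion implicitly assumes $\max L(Q)$ exists, which is itself a nontrivial finiteness statement. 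In short, the proposal articulates a plausible programme but does not close any of its essential steps, while the paper makes no claim to a general proof and instead settles two concrete families by direct construction.
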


In this article, we check that above conjecture holds true if $Q$ is either a quiver of type $A_{n}$ or a quiver of type $\tilde{A}_{n,1}$. We also
give  possible lengths of maximal green sequences for a quiver of type $\tilde{A}_{n,1}$.

\subsection*{Notation} Throughout this paper, let $\Lambda$ be a finite dimensional algebra over an algebraically
closed field $k$.
\begin{itemize}
\item[1.] We denote by $\mod \Lambda$ the category of finite dimensional right $\Lambda$-modules.
\item[2.] We denote by $\tau$ the Auslander-Reiten translation of $\mod \Lambda$ (refer to \cite{ASS, ARS} for definition and properties).
\item[3.] For two integers $a\leq b$, we denote by $[a,b]$ the interval $\{a,a+1,\dots, b\}$. 
\end{itemize} 
 
\section{Preliminary}
\subsection{Green mutations and maximal green sequences}
In this subsection, we recall fundamentals of green quiver mutations and maximal green sequences.
Definitions and results in this subsection are referred to \cite{BDP}. 
\begin{definition}Let $Q$ be a finite connected quiver.
\begin{itemize}
\item[(1)] $Q$ is a cluster quiver if $Q$ does not admit loops or 2-oriented cycles.
\item[(2)] An ice quiver is a pair $(Q,F)$ where $Q$ is a cluster quiver and $F$ is a subset of $Q_{0}$
such that there is no arrow between two vertices of $F$. For an ice quiver $(Q,F)$, we call a vertex in $F$
a frozen vertex. 
\end{itemize}
\end{definition}
\begin{definition}
Let $(Q,F)$ be an ice quiver and $k\in Q_{0}\setminus F$. We define a new ice quiver $(\mu_{k}Q,F)$ from $(Q,F)$ by applying
following $4$-steps.
\begin{itemize}
\item[$(\mathrm{Step}\;1)$] For any pair of arrows $i\stackrel{\alpha}{\to} k\stackrel{\beta}{\to} j$, add an arrow $i\stackrel{[\alpha\beta]}{\to}j$.
\item[$(\mathrm{Step}\;2)$] Replace any arrow $i\stackrel{\alpha}{\to} k$ by an arrow $i\stackrel{\alpha^{\ast}}{\leftarrow} k$. 
\item[$(\mathrm{Step}\;3)$] Replace any arrow $k\stackrel{\beta}{\to} j$ by an arrow $k\stackrel{\beta^{\ast}}{\leftarrow} j$.
\item[$(\mathrm{Step}\;4)$] Remove a maximal collection of $2\text{-}cycles$ and all arrows between frozen vertices.
\end{itemize}
We call $(\mu_{k}Q,F)$ the mutation of $(Q,F)$ at a non-frozen vertex $k$.
\end{definition}
\begin{definition}Let $(Q,F),\;(Q',F)$ be ice quivers with $Q_{0}=Q'_{0}$.
\begin{itemize}
\item[(1)] $(Q,F)$ and $(Q',F)$ are mutation-equivalent if there is a finite sequence
$(k_{1},\dots,k_{l})$ of non-frozen vetices such that 
\[(Q',F)=(\mu_{k_{l}}\mu_{k_{l-1}}\cdots\mu_{k_{1}}Q,F).\] 
Then we denote by $\mathrm{Mut}(Q,F)$ the mutation-equivalence class of $(Q,F)$.
\item[(2)]$(Q,F)$ is isomorphic to $(Q',F)$ as ice quivers if there is an isomorphism
$\varphi:Q\rightarrow Q'$ of quivers fixing any frozen vertex. In this case, we denote
$(Q,F)\simeq (Q',F)$ and denote by $[(Q,F)]$ the isomorphism class of $(Q,F)$.
\end{itemize}

\end{definition}
From now on, we assume that $Q$ is a cluster quiver and $Q'_{0}:=\{c(i)\mid i\in Q_{0}\}$ is a copy of $Q_{0}$.
\begin{definition}
The framed quiver associated with $Q$ is the quiver $\hat{Q}$ defined as follows:
\begin{itemize}
\item $\hat{Q}_{0}:=Q_{0}\sqcup Q'_{0}$.
\item $\hat{Q}_{1}:=Q_{1}\sqcup \{i\to c(i)\mid i\in Q_{0}\}$.
\end{itemize}
The coframed quiver associated with $Q$ is the quiver $\check{Q}$ defined as follows:
\begin{itemize}
\item $\check{Q}_{0}:=Q_{0}\sqcup Q'_{0}$.
\item $\check{Q}_{1}:=Q_{1}\sqcup \{c(i)\to i\mid i\in Q_{0}\}$.
\end{itemize}
Note that $(\hat{Q},Q'_{0})$ and $(\check{Q},Q'_{0})$ are ice quivers. We denote by $\mathrm{Mut}(\hat{Q})$ the
mutation-equivalence class $\mathrm{Mut}(\hat{Q},Q'_{0})$ of $(\hat{Q},Q'_{0})$.  
\end{definition}
\begin{definition}
Let $R\in \mathrm{Mut}(\hat{Q})$ and let $i$ be a non-frozen vertex.
\begin{itemize}
\item[(1)] $i$ is said to be a green vertex if $\{\alpha\in R_{1}\mid s(\alpha)\in Q'_{0},\;t(\alpha)=i\}=\emptyset$.
\item[(2)] $i$ is said to be a red vertex if $\{\alpha\in R_{1}\mid  s(\alpha)=i,\;t(\alpha)\in Q'_{0}\}=\emptyset$.
\end{itemize}  
\end{definition}
\begin{theorem}
\cite{BDP}
Let $R\in \mathrm{Mut}(\hat{Q})$. Then we have \[R_{0}=\{i\in R_{0}\mid i\ \mathrm{is\ green}\}\sqcup\{i\in R_{0}\mid i\ \mathrm{is\ red}\}.\]
\end{theorem}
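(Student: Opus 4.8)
The plan is to recast the statement as the \emph{sign-coherence of $c$-vectors} and then feed it to a representation-theoretic input. For $R=\mu_{k_{\ell}}\cdots\mu_{k_{1}}\hat{Q}\in\mathrm{Mut}(\hat Q)$ and a non-frozen vertex $i\in Q_{0}$, set $c^{R}_{i}=(c^{R}_{ij})_{j\in Q_{0}}\in\Z^{Q_{0}}$, where $c^{R}_{ij}$ is the number of arrows $i\to c(j)$ in $R$ minus the number of arrows $c(j)\to i$ in $R$ (this is well defined: $R$, being in $\mathrm{Mut}(\hat Q)$, has no arrows between two frozen vertices). Since $R$ has no $2$-cycles, for fixed $i,j$ at most one of $i\to c(j)$, $c(j)\to i$ occurs, so $|c^{R}_{ij}|$ is the total number of arrows between $i$ and $c(j)$ and its sign records the direction. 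Hence $i$ is green in $R$ exactly when $c^{R}_{ij}\ge 0$ for all $j$, and $i$ is red in $R$ exactly when $c^{R}_{ij}\le 0$ for all $j$. So the theorem is equivalent to the statement that for every $R\in\mathrm{Mut}(\hat Q)$ and every non-frozen $i$ the vector $c^{R}_{i}$ is nonzero and has all of its entries of one weak sign.

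This begs for induction on the length $\ell$ of the defining mutation sequence. The base case $\ell=0$ is immediate: in $\hat Q$ the vertex $i$ has the unique arrow $i\to c(i)$ into $Q'_{0}$, so $c^{\hat Q}_{i}$ is the $i$-th coordinate vector. For the step, compare the arrows to $Q'_{0}$ in $R$ and in $\mu_{k}R$. At the vertex $k$ it is easy: Step $1$ of the mutation rule never creates arrows incident to $k$ (a $2$-path through $k$ has $k$ in the middle, never at an endpoint), while Steps $2$ and $3$ reverse exactly the arrows incident to $k$; hence $c^{\mu_{k}R}_{k}=-c^{R}_{k}$, which is again nonzero and sign-coherent by the inductive hypothesis. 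For $i\ne k$, however, new arrows between $i$ and $Q'_{0}$ are manufactured out of $2$-paths $i\to k\to c(j)$ and $c(j)\to k\to i$ in Step $1$ and then pared down by the $2$-cycle cancellation of Step $4$, and one must rule out that this leaves $i$ with an outgoing and an incoming arrow to $Q'_{0}$ that do not cancel. \textbf{This is the genuine obstacle}: it cannot be settled from quiver combinatorics alone, because the outcome depends on how the arrows between $i$ and $k$ (an entry of the exchange matrix) interact with the $c$-vector $c^{R}_{k}$, and controlling that interaction is exactly what makes sign-coherence a theorem rather than a bookkeeping exercise.

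To supply the missing control I would categorify $\mathrm{Mut}(\hat Q)$. In the acyclic case --- which is all this paper needs --- one invokes the bijection between $\mathrm{Mut}(\hat Q)$ and the set of $2$-term silting complexes over $kQ$, equivalently support $\tau$-tilting pairs for $kQ$, equivalently functorially finite torsion classes in $\mod kQ$. Under it a seed $R$ goes to a torsion class $\mathcal T$, the arrows of $R$ to $Q'_{0}$ translate into the covering relations of $\mathcal T$ in the poset of torsion classes, and a short computation identifies $c^{R}_{i}$ with $\pm\dimvec X$ for the brick $X$ labelling the corresponding covering relation, the sign being $+$ for a downward covering and $-$ for an upward one. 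Since $\dimvec X$ is a nonzero nonnegative vector for every nonzero module $X$, sign-coherence and nonvanishing follow immediately, which closes the induction and hence proves the theorem. For a general cluster quiver $Q$ one argues in the same spirit after replacing $kQ$ by the Jacobian algebra of a nondegenerate quiver with potential on $Q$ (or, when that algebra is infinite-dimensional, the associated $2$-Calabi--Yau triangulated category) and running the decorated-representation machinery of Derksen--Weyman--Zelevinsky; the only non-formal ingredient, once again, is that the relevant dimension vectors really are the $c$-vectors.
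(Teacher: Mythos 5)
The paper does not prove this statement at all: it is imported verbatim from [BDP] with a citation, so there is no internal argument to compare yours against. Judged on its own terms, your outline is the standard and essentially correct route: the translation of the green/red dichotomy into the assertion that each $c$-vector $c^{R}_{i}$ is nonzero and sign-coherent is accurate (using that $R$ has no $2$-cycles and no arrows between frozen vertices), the base case and the computation $c^{\mu_{k}R}_{k}=-c^{R}_{k}$ are correct, and you rightly identify that the inductive step at vertices $i\neq k$ is exactly where sign-coherence stops being combinatorial bookkeeping. The one place where your write-up understates the work is the phrase ``a short computation identifies $c^{R}_{i}$ with $\pm\dimvec X$'': the identification of $c$-vectors with dimension vectors of exceptional modules (real Schur roots) in the acyclic case, or with the $\mathbf{E}$-invariant-controlled dimension vectors in the Derksen--Weyman--Zelevinsky setting, is itself a substantive theorem (Speyer--Thomas, N\'ajera Ch\'avez, DWZ) rather than a computation, so your argument is a correct reduction to cited machinery rather than a self-contained proof. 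That is entirely appropriate here, since the paper itself treats the statement as a black box; just be explicit that the brick/root labelling is the external input being invoked, not something derived on the spot.
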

\begin{definition}
A green sequence for a cluster quiver $Q$ is a sequence $\mathbf{i}=(i_{1},i_{2},\dots,i_{l})$ of $Q_{0}$
such that $i_{1}$ is green in $\hat{Q}$ and for any $2\leq k \leq l$, $i_{k}$ is green in $\mu_{i_{k-1}}\cdots\mu_{1}\hat{Q}$.
In this case, $l$ is called length of $\mathbf{i}$ and we denote it by $l(\mathbf{i})$. A green sequence $\mathbf{i}=(i_{1},i_{2},\dots,i_{l})$ is said to be maximal if
 $\mu_{i_{l}}\cdots\mu_{i_{1}}\hat{Q}$ has no green vertex. Then we denote by $\mathbf{green}(Q)$ the set of maximal green sequences for $Q$ and $\mathbf{green }_{l}(Q):=\{\mathbf{i}\in \mathbf{green}(Q) \mid l(\mathbf{i})=l\}.$
\end{definition}
\begin{conjecture}
\label{interval conj}\cite[Conjecure\;2.22]{BDP}
Let $Q$ be a cluster quiver. Then $\{l\in \Z_{\geq 0}\mid \mathbf{green}_{l}(Q)\neq \emptyset \}$ is an interval in $\Z$.
\end{conjecture}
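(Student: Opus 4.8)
The plan is to work entirely inside the oriented exchange graph $\overrightarrow{E}(Q)$, where a maximal green sequence of length $l$ is precisely a directed path of $l$ edges from the fully green vertex (the framed quiver $\hat{Q}$) to the fully red vertex (the coframed quiver $\check{Q}$). Under this dictionary, $\{l\mid \mathbf{green}_l(Q)\neq\emptyset\}$ is exactly the set of lengths of directed source-to-sink paths, and the interval property is equivalent to the absence of gaps among these lengths. My strategy is to produce, from any maximal green sequence of length $l$ that is not of minimal length, another maximal green sequence of length $l-1$; combined with connectivity of the space of such sequences, this yields the interval.

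First I would set up a system of \emph{elementary transformations} on $\mathbf{green}(Q)$. The exchange graph carries rich local structure via its description by $g$-vectors: two maximal green sequences differing only by transposing consecutive mutations $\mu_i\mu_j=\mu_j\mu_i$ with no exchange relation between $i$ and $j$ are declared equivalent (such a move preserves length), while a \emph{rank-two flip} replaces one directed route across a two-dimensional face of the $g$-vector fan by the opposite route. The length change of a rank-two flip is the difference of the two route-lengths across that face, and every such face is governed by a rank-two cluster subquiver obtained by restricting $Q$ to two vertices.

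Next I would prove that the resulting transformation graph is connected. In finite type the $g$-vector fan is the normal fan of a generalized associahedron, directed source-to-sink paths are the monotone edge-paths of a polytope for a generic linear functional, and connectivity of monotone paths under polygon flips is the affirmative answer to the Baues problem for polytopes. I would transport this to the general case using the wall-and-chamber (stability) decomposition of the space of central charges: the totally green and totally red chambers are distinguished, the length of a sequence is the number of walls a generic connecting path crosses, and deforming such a path continuously while recording each catastrophe at a codimension-two joint realizes exactly the rank-two flips above. Connectedness of the ambient chamber space then supplies connectivity of the transformation graph.

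The crux, and the main obstacle, is to force a \emph{net} length change of $\pm 1$. A single rank-two flip across a codimension-two joint of affine or wild type can alter the length by more than one, so I would have to decompose it into admissible moves each changing the length by exactly one, or reroute the path locally to descend one step at a time. For an $A_2$ joint (the only rank-two finite type beyond $A_1\times A_1$) this is a direct pentagon computation giving change $0$ or $1$, so the entire difficulty is concentrated at rank-two joints with at least two arrows, where the local scattering diagram has infinitely many—and in the wild case densely many—walls. Controlling the admissible length increments at precisely these joints is where a genuinely new input is required: it is exactly this step that the special cases of the present paper render tractable, since type $A_n$ has only $A_2$ and $A_1\times A_1$ joints, and the single affine joint of type $\tilde A_{n,1}$ can be analysed explicitly.
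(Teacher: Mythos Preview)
The statement you are addressing is a \emph{conjecture}; the paper does not prove it in general and only verifies the special cases $A_n$ and $\tilde A_{n,1}$ by computing the length set explicitly. So there is no ``paper's own proof'' to compare against, and your proposal is an attempt at something the paper does not claim.

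That said, your proposal is not a proof but a strategy with an explicitly acknowledged gap, and the gap is fatal. You reduce the problem to showing that from any maximal green sequence of non-minimal length one can manufacture one of length one less, via commutations and rank-two flips. At $A_1\times A_1$ and $A_2$ joints the length change is $0$ or $1$, as you note. But at a rank-two joint whose underlying quiver has $\geq 2$ arrows the two-dimensional cone in the $g$-vector fan is subdivided by \emph{infinitely many} walls (densely many in the wild case), so there is no well-defined ``other side'' to flip to: the local picture is not a polygon at all, and the very notion of rank-two flip collapses. Decomposing a local reroute through such a joint into moves each changing length by exactly one is precisely a local instance of the conjecture you are trying to prove; you have relocated the difficulty, not removed it. You say as much yourself (``a genuinely new input is required''), which means the argument is incomplete by your own account.

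The connectivity step is also not secured outside finite type. The Baues-problem input you invoke requires an honest polytope; in infinite type the $g$-vector fan need not be complete and no such polytope exists. Your scattering-diagram deformation sketch (``deforming a generic path and recording catastrophes at codimension-two joints'') does not obviously survive the presence of accumulating walls: a generic homotopy of paths can cross infinitely many codimension-two strata, so the resulting sequence of flips need not be finite or even well-defined. Without both connectivity and a $\pm 1$ control on each elementary move, the interval conclusion does not follow.
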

\begin{definition}
An oriented exchange graph $\overrightarrow{ \mathrm{E}}(Q)$ is defined as follows:
\begin{itemize}
\item $\overrightarrow{ \mathrm{E}}(Q)_{0}:= \mathrm{Mut}(\hat{Q})/\simeq$.
\item $[R]\to [R']$ in $\overrightarrow{ \mathrm{E}}(Q)$ if $[R']=[\mu_{k}R]$ for some green vertex $k$ of $R$.
\end{itemize} 
\end{definition}
\begin{example}
\label{example1} Let $Q=1\to 2$ and $Q'_{0}=\{c(1)=3,c(2)=4\}$. Then an orientation exchange graph $\overrightarrow{\mathrm{E}}(Q)$ is given by the following.
\[ 
{\unitlength 0.1in%
\begin{picture}( 48.1000, 37.0000)(  8.8000,-40.9000)%
\put(28.0000,-10.0000){\makebox(0,0)[lb]{$1$}}%
\put(34.0000,-10.0000){\makebox(0,0)[lb]{$2$}}%
\put(34.0000,-6.0000){\makebox(0,0)[lb]{$4$}}%
\put(28.0000,-6.0000){\makebox(0,0)[lb]{$3$}}%
\put(10.0000,-24.0000){\makebox(0,0)[lb]{$1$}}%
\put(16.0000,-24.0000){\makebox(0,0)[lb]{$2$}}%
\put(16.0000,-20.0000){\makebox(0,0)[lb]{$4$}}%
\put(10.0000,-20.0000){\makebox(0,0)[lb]{$3$}}%
\put(48.0000,-18.0000){\makebox(0,0)[lb]{$1$}}%
\put(54.0000,-18.0000){\makebox(0,0)[lb]{$2$}}%
\put(54.0000,-14.0000){\makebox(0,0)[lb]{$4$}}%
\put(48.0000,-14.0000){\makebox(0,0)[lb]{$3$}}%
\put(48.0000,-30.0000){\makebox(0,0)[lb]{$1$}}%
\put(54.0000,-30.0000){\makebox(0,0)[lb]{$2$}}%
\put(54.0000,-26.0000){\makebox(0,0)[lb]{$4$}}%
\put(48.0000,-26.0000){\makebox(0,0)[lb]{$3$}}%
\put(20.0000,-40.0000){\makebox(0,0)[lb]{$1$}}%
\put(26.0000,-40.0000){\makebox(0,0)[lb]{$2$}}%
\put(26.0000,-36.0000){\makebox(0,0)[lb]{$4$}}%
\put(20.0000,-36.0000){\makebox(0,0)[lb]{$3$}}%
\put(36.0000,-40.0000){\makebox(0,0)[lb]{$1$}}%
\put(42.0000,-40.0000){\makebox(0,0)[lb]{$2$}}%
\put(42.0000,-36.0000){\makebox(0,0)[lb]{$4$}}%
\put(36.0000,-36.0000){\makebox(0,0)[lb]{$3$}}%
%
\special{pn 8}%
\special{pa 2680 390}%
\special{pa 3690 390}%
\special{pa 3690 1090}%
\special{pa 2680 1090}%
\special{pa 2680 390}%
\special{pa 3690 390}%
\special{fp}%
%
\special{pn 8}%
\special{pa 880 1790}%
\special{pa 1890 1790}%
\special{pa 1890 2490}%
\special{pa 880 2490}%
\special{pa 880 1790}%
\special{pa 1890 1790}%
\special{fp}%
%
\special{pn 8}%
\special{pa 1880 3390}%
\special{pa 2890 3390}%
\special{pa 2890 4090}%
\special{pa 1880 4090}%
\special{pa 1880 3390}%
\special{pa 2890 3390}%
\special{fp}%
%
\special{pn 8}%
\special{pa 3480 3390}%
\special{pa 4490 3390}%
\special{pa 4490 4090}%
\special{pa 3480 4090}%
\special{pa 3480 3390}%
\special{pa 4490 3390}%
\special{fp}%
%
\special{pn 8}%
\special{pa 4680 1190}%
\special{pa 5690 1190}%
\special{pa 5690 1890}%
\special{pa 4680 1890}%
\special{pa 4680 1190}%
\special{pa 5690 1190}%
\special{fp}%
%
\special{pn 8}%
\special{pa 4680 2390}%
\special{pa 5690 2390}%
\special{pa 5690 3090}%
\special{pa 4680 3090}%
\special{pa 4680 2390}%
\special{pa 5690 2390}%
\special{fp}%
%
\special{pn 8}%
\special{pa 3020 3800}%
\special{pa 3420 3800}%
\special{fp}%
%
\special{pn 8}%
\special{pa 3040 3730}%
\special{pa 3069 3710}%
\special{pa 3097 3695}%
\special{pa 3126 3690}%
\special{pa 3155 3697}%
\special{pa 3184 3712}%
\special{pa 3214 3730}%
\special{pa 3243 3748}%
\special{pa 3273 3759}%
\special{pa 3302 3760}%
\special{pa 3332 3751}%
\special{pa 3361 3736}%
\special{pa 3400 3710}%
\special{fp}%
%
\special{pn 8}%
\special{pa 2940 940}%
\special{pa 3340 940}%
\special{fp}%
\special{sh 1}%
\special{pa 3340 940}%
\special{pa 3273 920}%
\special{pa 3287 940}%
\special{pa 3273 960}%
\special{pa 3340 940}%
\special{fp}%
%
\special{pn 8}%
\special{pa 2850 880}%
\special{pa 2850 620}%
\special{fp}%
\special{sh 1}%
\special{pa 2850 620}%
\special{pa 2830 687}%
\special{pa 2850 673}%
\special{pa 2870 687}%
\special{pa 2850 620}%
\special{fp}%
%
\special{pn 8}%
\special{pa 3450 880}%
\special{pa 3450 620}%
\special{fp}%
\special{sh 1}%
\special{pa 3450 620}%
\special{pa 3430 687}%
\special{pa 3450 673}%
\special{pa 3470 687}%
\special{pa 3450 620}%
\special{fp}%
%
\special{pn 8}%
\special{pa 1630 2260}%
\special{pa 1630 2000}%
\special{fp}%
\special{sh 1}%
\special{pa 1630 2000}%
\special{pa 1610 2067}%
\special{pa 1630 2053}%
\special{pa 1650 2067}%
\special{pa 1630 2000}%
\special{fp}%
%
\special{pn 8}%
\special{pa 1030 2020}%
\special{pa 1030 2280}%
\special{fp}%
\special{sh 1}%
\special{pa 1030 2280}%
\special{pa 1050 2213}%
\special{pa 1030 2227}%
\special{pa 1010 2213}%
\special{pa 1030 2280}%
\special{fp}%
%
\special{pn 8}%
\special{pa 1510 2340}%
\special{pa 1110 2340}%
\special{fp}%
\special{sh 1}%
\special{pa 1110 2340}%
\special{pa 1177 2360}%
\special{pa 1163 2340}%
\special{pa 1177 2320}%
\special{pa 1110 2340}%
\special{fp}%
%
\special{pn 8}%
\special{pa 2130 3950}%
\special{pa 2530 3950}%
\special{fp}%
\special{sh 1}%
\special{pa 2530 3950}%
\special{pa 2463 3930}%
\special{pa 2477 3950}%
\special{pa 2463 3970}%
\special{pa 2530 3950}%
\special{fp}%
%
\special{pn 8}%
\special{pa 2050 3630}%
\special{pa 2050 3890}%
\special{fp}%
\special{sh 1}%
\special{pa 2050 3890}%
\special{pa 2070 3823}%
\special{pa 2050 3837}%
\special{pa 2030 3823}%
\special{pa 2050 3890}%
\special{fp}%
%
\special{pn 8}%
\special{pa 2640 3620}%
\special{pa 2640 3880}%
\special{fp}%
\special{sh 1}%
\special{pa 2640 3880}%
\special{pa 2660 3813}%
\special{pa 2640 3827}%
\special{pa 2620 3813}%
\special{pa 2640 3880}%
\special{fp}%
%
\special{pn 8}%
\special{pa 5350 1740}%
\special{pa 4950 1740}%
\special{fp}%
\special{sh 1}%
\special{pa 4950 1740}%
\special{pa 5017 1760}%
\special{pa 5003 1740}%
\special{pa 5017 1720}%
\special{pa 4950 1740}%
\special{fp}%
%
\special{pn 8}%
\special{pa 4840 1680}%
\special{pa 4840 1420}%
\special{fp}%
\special{sh 1}%
\special{pa 4840 1420}%
\special{pa 4820 1487}%
\special{pa 4840 1473}%
\special{pa 4860 1487}%
\special{pa 4840 1420}%
\special{fp}%
%
\special{pn 8}%
\special{pa 5440 1410}%
\special{pa 5440 1670}%
\special{fp}%
\special{sh 1}%
\special{pa 5440 1670}%
\special{pa 5460 1603}%
\special{pa 5440 1617}%
\special{pa 5420 1603}%
\special{pa 5440 1670}%
\special{fp}%
%
\special{pn 8}%
\special{pa 4930 1670}%
\special{pa 5380 1380}%
\special{fp}%
\special{sh 1}%
\special{pa 5380 1380}%
\special{pa 5313 1399}%
\special{pa 5335 1409}%
\special{pa 5335 1433}%
\special{pa 5380 1380}%
\special{fp}%
%
\special{pn 8}%
\special{pa 4860 2610}%
\special{pa 4860 2870}%
\special{fp}%
\special{sh 1}%
\special{pa 4860 2870}%
\special{pa 4880 2803}%
\special{pa 4860 2817}%
\special{pa 4840 2803}%
\special{pa 4860 2870}%
\special{fp}%
%
\special{pn 8}%
\special{pa 5360 2560}%
\special{pa 4910 2850}%
\special{fp}%
\special{sh 1}%
\special{pa 4910 2850}%
\special{pa 4977 2831}%
\special{pa 4955 2821}%
\special{pa 4955 2797}%
\special{pa 4910 2850}%
\special{fp}%
%
\special{pn 8}%
\special{pa 4940 2940}%
\special{pa 5340 2940}%
\special{fp}%
\special{sh 1}%
\special{pa 5340 2940}%
\special{pa 5273 2920}%
\special{pa 5287 2940}%
\special{pa 5273 2960}%
\special{pa 5340 2940}%
\special{fp}%
%
\special{pn 8}%
\special{pa 5370 2890}%
\special{pa 4920 2600}%
\special{fp}%
\special{sh 1}%
\special{pa 4920 2600}%
\special{pa 4965 2653}%
\special{pa 4965 2629}%
\special{pa 4987 2619}%
\special{pa 4920 2600}%
\special{fp}%
%
\special{pn 8}%
\special{pa 4150 3940}%
\special{pa 3750 3940}%
\special{fp}%
\special{sh 1}%
\special{pa 3750 3940}%
\special{pa 3817 3960}%
\special{pa 3803 3940}%
\special{pa 3817 3920}%
\special{pa 3750 3940}%
\special{fp}%
%
\special{pn 8}%
\special{pa 3710 3600}%
\special{pa 4160 3890}%
\special{fp}%
\special{sh 1}%
\special{pa 4160 3890}%
\special{pa 4115 3837}%
\special{pa 4115 3861}%
\special{pa 4093 3871}%
\special{pa 4160 3890}%
\special{fp}%
%
\special{pn 8}%
\special{pa 4150 3610}%
\special{pa 3700 3900}%
\special{fp}%
\special{sh 1}%
\special{pa 3700 3900}%
\special{pa 3767 3881}%
\special{pa 3745 3871}%
\special{pa 3745 3847}%
\special{pa 3700 3900}%
\special{fp}%
%
\special{pn 8}%
\special{pa 2620 760}%
\special{pa 1420 1760}%
\special{fp}%
\special{sh 1}%
\special{pa 1420 1760}%
\special{pa 1484 1733}%
\special{pa 1461 1726}%
\special{pa 1458 1702}%
\special{pa 1420 1760}%
\special{fp}%
%
\special{pn 8}%
\special{pa 3720 760}%
\special{pa 5060 1150}%
\special{fp}%
\special{sh 1}%
\special{pa 5060 1150}%
\special{pa 5002 1112}%
\special{pa 5009 1135}%
\special{pa 4990 1151}%
\special{pa 5060 1150}%
\special{fp}%
%
\special{pn 8}%
\special{pa 5110 1940}%
\special{pa 5110 2340}%
\special{fp}%
\special{sh 1}%
\special{pa 5110 2340}%
\special{pa 5130 2273}%
\special{pa 5110 2287}%
\special{pa 5090 2273}%
\special{pa 5110 2340}%
\special{fp}%
%
\special{pn 8}%
\special{pa 5070 3120}%
\special{pa 4520 3750}%
\special{fp}%
\special{sh 1}%
\special{pa 4520 3750}%
\special{pa 4579 3713}%
\special{pa 4555 3710}%
\special{pa 4549 3687}%
\special{pa 4520 3750}%
\special{fp}%
%
\special{pn 8}%
\special{pa 1420 2520}%
\special{pa 1820 3720}%
\special{fp}%
\special{sh 1}%
\special{pa 1820 3720}%
\special{pa 1818 3650}%
\special{pa 1803 3669}%
\special{pa 1780 3663}%
\special{pa 1820 3720}%
\special{fp}%
\put(18.2000,-12.3000){\makebox(0,0)[lb]{$\mu_{1}$}}%
\put(13.4000,-32.1000){\makebox(0,0)[lb]{$\mu_{2}$}}%
\put(43.9000,-9.2000){\makebox(0,0)[lb]{$\mu_{2}$}}%
\put(51.5000,-22.3000){\makebox(0,0)[lb]{$\mu_{1}$}}%
\put(47.5000,-36.3000){\makebox(0,0)[lb]{$\mu_{2}$}}%
\end{picture}}\]
\end{example}
\begin{theorem}\rm{\cite{BDP}}
Let $\Lambda$ be a finite dimensional path algebra $kQ$ and denote by $\tsilt (\Lambda)$ the poset of two-term silting object of $\mathrm{D}^{\mathrm{b}}(\mod \Lambda)$ (refer to \cite{AIR} for definition).
Then the Hasse-quiver of $\tsilt (\Lambda)$ coincides to $\overrightarrow{\mathrm{E}}(Q)$. 
\end{theorem}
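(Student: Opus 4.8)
The plan is to transport the statement, via the Adachi--Iyama--Reiten $\tau$-tilting correspondence, to a statement about support tilting modules that has essentially already been recalled in the introduction, and then to record the explicit dictionary on the level of quivers that the later sections of the paper will actually use. First I would invoke \cite{AIR}: the functor $H^{0}(-)$ carries a two-term silting complex $P$ to the support $\tau$-tilting module $H^{0}(P)$, and $P\mapsto H^{0}(P)$ is an isomorphism of posets from $\tsilt(\Lambda)$, with the silting order, onto $\sttilt(\Lambda)$, ordered by inclusion of the torsion classes $\operatorname{\mathsf{Fac}}(-)$; moreover it is compatible with mutation. Since $\Lambda=kQ$ is hereditary, Auslander--Reiten duality $\Ext^{1}_{\Lambda}(M,N)\cong D\sHom_{\Lambda}(N,\tau M)$ gives ``$\tau$-rigid $=$ rigid'', so $\sttilt(kQ)$ is exactly the Ingalls--Thomas poset $\stilt(kQ)$ of \cite{IT}. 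Combined with the isomorphism $\operatorname{Hasse}(\stilt(kQ))\cong\overrightarrow{\mathrm{E}}(Q)$ of \cite{AIR,BDP} recalled above, this already produces an isomorphism of quivers $\operatorname{Hasse}(\tsilt(\Lambda))\cong\overrightarrow{\mathrm{E}}(Q)$.

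To make this identification explicit I would attach to a basic two-term silting complex $T=\bigoplus_{i=1}^{n}T_{i}$ the ice quiver $R_{T}$ on the vertex set $Q_{0}\sqcup Q'_{0}$ whose restriction to $Q_{0}$ is the Gabriel quiver of $\End_{\Db(\mod\Lambda)}(T)$ and whose arrows between $Q_{0}$ and $Q'_{0}$ record the $g$-vectors of the $T_{i}$ (equivalently, by tropical duality, their $c$-vectors). For the initial complex $T=\Lambda$ one has $\End_{\Db}(\Lambda)\cong\Lambda$, with Gabriel quiver $Q$, and the $g$-vectors of the indecomposable projectives are the standard basis, so $R_{\Lambda}=\hat{Q}$. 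One then checks that silting mutation at the summand $T_{i}$ — built, in the ``green'' case, from the triangle $T_{i}\to E\to T_{i}^{\ast}\to T_{i}[1]$ with $T_{i}\to E$ a minimal left $\add(T/T_{i})$-approximation — is intertwined with quiver mutation, $R_{\mu_{i}(T)}\simeq\mu_{i}R_{T}$, and that a non-frozen vertex $i$ is green in $R_{T}$ precisely when the mutation at $T_{i}$ is a left mutation, i.e.\ when $\mu_{i}(T)<T$, i.e.\ when there is a Hasse arrow $T\to\mu_{i}(T)$ in $\operatorname{Hasse}(\tsilt(\Lambda))$; red vertices correspond to Hasse arrows into $T$. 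Because every two-term silting complex has exactly $n$ mutation-neighbours \cite{AIR}, the assignment $T\mapsto R_{T}$ is then a bijection $\tsilt(\Lambda)/{\simeq}\longrightarrow\mathrm{Mut}(\hat{Q})/{\simeq}$ taking Hasse arrows onto green mutations, i.e.\ the asserted isomorphism of quivers.

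The $H^{0}$-reduction, the hereditary identity ``$\tau$-rigid $=$ rigid'', and the $n$-regularity of both quivers are routine. The hard part is the dictionary of the previous paragraph: one must check that $T\mapsto R_{T}$ does not depend on the chosen presentations, that it really takes values in $\mathrm{Mut}(\hat{Q})$ rather than some larger mutation class, that two-term silting mutation stays two-term, and — most delicately — that the $c$-vectors of a two-term silting complex over $kQ$ are \emph{sign-coherent}, each of them $\ge 0$ or $\le 0$. Sign-coherence is exactly what forces the green/red dichotomy of the non-frozen vertices to agree with the left/right dichotomy of silting mutations, hence with out-arrows versus in-arrows of the Hasse quiver; proving it, together with the compatibility of the Gabriel quiver of $\End_{\Db}(T)$ with the four mutation steps, is the technical core of the argument.
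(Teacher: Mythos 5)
The first thing to say is that the paper itself offers no proof of this statement: it is quoted from \cite{BDP} and used as a black box, so there is no internal argument to compare yours against. Judged on its own terms, your proposal has one genuine logical defect and one large unfilled hole. The defect is circularity in your first paragraph: the isomorphism $\operatorname{Hasse}(\stilt(kQ))\cong\overrightarrow{\mathrm{E}}(Q)$ that you ``combine'' with the AIR bijection $\tsilt(\Lambda)\cong\sttilt(\Lambda)=\stilt(\Lambda)$ is, in this paper's own logical order, a \emph{consequence} of the theorem you are proving --- the paper derives it in the sentence immediately following the theorem, precisely by composing the theorem with the AIR bijection. So the first paragraph establishes nothing, and everything rests on your second and third paragraphs.

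There the road map is the correct one, and it is essentially the route taken in \cite{BDP} and its sources: attach to each two-term silting complex $T$ an ice quiver $R_{T}$ built from the Gabriel quiver of $\End_{\Db(\mod\Lambda)}(T)$ together with the $g$-/$c$-vector data, check $R_{\Lambda}=\hat{Q}$, intertwine silting mutation with the four-step quiver mutation, and use sign-coherence of $c$-vectors to match green/red vertices with left/right mutations, i.e.\ with out-/in-arrows of the Hasse quiver. But you explicitly defer exactly the steps that carry all the weight: sign-coherence and the compatibility $R_{\mu_{i}(T)}\simeq\mu_{i}R_{T}$. Neither is routine; sign-coherence is a theorem of Derksen--Weyman--Zelevinsky (categorically, of Plamondon and Nagao), and the mutation compatibility requires controlling the Gabriel quiver of the endomorphism algebra through the approximation triangle, including the cancellation of $2$-cycles in Step $4$. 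You also need injectivity of $T\mapsto R_{T}$, which requires knowing that a two-term silting complex is determined by its $g$-matrix (again a nontrivial result of \cite{AIR}); $n$-regularity plus connectedness only gives surjectivity onto the mutation class. As written, then, your text is an accurate table of contents for the proof rather than a proof, and its only self-contained portion is the circular one.
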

In \cite{AIR}, Adachi, Iyama and Reiten shows that $\tsilt(\Lambda)$ is isomorphic to the poset of support tilting modules $\stilt(\Lambda)$\;(see Subsection \ref{subsec:2.2} below for definition).
we have that the Hasse-quiver of $\stilt(\Lambda)$ coincides to $\overrightarrow{\mathrm{E}}(Q)$. We also remark that an oriented exchange graph
 $\overrightarrow{ \mathrm{E}}(Q)$ is realized by several posets coming from representation theory. For more detail, we refer to \cite{BDP}.
\subsection{Tilting modules}
\label{subsec:2.1}
In this subsection we recall the definition of tilting modules.
For a module $M$, we denote by $|M|$ the number of non-isomorphic indecomposable direct summand of $M$.

\begin{definition}
A $\Lambda$-module $M$ is said to be a \emph{partial tilting module} if it satisfies following conditions. 
\begin{enumerate}[(i)]
\item The projective dimension of $T$ is at most 1. 
\item $\Ext_{\Lambda}^{1}(T,T)=0.$
\end{enumerate} 

If partial tilting module $T$ satisfies $|M|=|\Lambda|$, then we call $T$ a \emph{tilting module}. The set of non-isomorphic basic tilting modules of $\Lambda$ is denoted by $\tilt(\Lambda)$
\end{definition}
For a $\Lambda$-module $M$, we put $M^{\perp_{1}}:=\{X\in \mod \Lambda\mid \Ext_{\Lambda}^{1}(M,X)=0\}.$
\begin{definitiontheorem}\cite{HU1}
Let $T_{1}$ and $T_{2}$ be two tilting modules. We write $T_{1}\leq T_{2}$ if
$T^{\perp_{1}}_{1}\subset T^{\perp_{1}}_{2}$. Then $\leq$ defines a partial order
on $\tilt(\Lambda)$. 
\end{definitiontheorem}
For a module $M$, we let \[\mathrm{Fac}M:=\{X\in \mod \Lambda\mid X\ \text{ is\ a\ factor\ module\ of\ finite\ direct\ sums\ of\ copies\ of\ M}\}.\]
  It is known that if $T$ is a tilting module, then $X$ is in $T^{\perp_{1}}$ if and only if $X$ is in $\mathrm{Fac}T$.
Therefore we have the following\;\cite{HU2}\;:
\[T\geq T^{'}\Leftrightarrow \Ext_{\Lambda}^{1}(T,T^{'})=0.\]
\subsection{Support $\tau$-tilting modules}
\label{subsec:2.2}
The notion of support $\tau$-tilting modules which were introduced in \cite{AIR}
is a generalization of that of tilting modules.

Let us recall the definition of support $\tau$-tilting modules.

\begin{definition}\cite{AIR}
Let $M$ be a $\Lambda$-module and $P$ be a projective $\Lambda$-module.
\begin{enumerate}[(1)]
\item $M$ is said to be a $\tau$-\emph{rigid module} if it satisfies
$\Hom_{\Lambda}(M, \tau M)=0$.
\item  $(M,P)$ is said to be a $\tau$-\emph{rigid pair} 
if $M$ is a $\tau$-rigid module and $\Hom_{\Lambda}(P,M)=0$.
\item $(M,P)$ is called a \emph{support $\tau$-tilting pair} 
if it is a $\tau$-rigid pair with $|M|+|P|=|\Lambda|$.
We then call $M$ a \emph{support $\tau$-tilting module}.
The set of non-isomorphic basic support $\tau$-tilting modules of $\Lambda$ is denoted by $\sttilt(\Lambda)$.
\end{enumerate}
\end{definition}
We note that if $M\in \sttilt(\Lambda)$, then there is a unique (up to isomorphism) basic projective module $P$ such that $(M,P)$ is a support $\tau$-tilting pair\;\cite{AIR}.
\begin{definitiontheorem}\cite{AIR}
Let $(M,P)$ and $(M^{'},P^{'})$ be two support $\tau$-tilting pair. We write $M\leq M^{'}$ if $\Hom_{\Lambda}(M,\tau M^{'})=0$ and $\add P^{'}\subset \add P $. 
Then $M\leq M'$ if and only if $\mathrm{Fac}M\subset \mathrm{Fac}M'$. Moreover, $\leq$ defines a partial order on $\sttilt(\Lambda)$.  
\end{definitiontheorem} 
We call $(N,U)$ an almost complete support $\tau$-tilting pair if $(N,U)$ is a $\tau$-rigid pair with $|N|+|U|=|\Lambda|-1$.
\begin{theorem}\rm{\cite{AIR}}
\begin{enumerate}[{\rm (1)}]
\item Let $(N,U)$ be a basic almost complete support $\tau$-tilting pair. Then $(N,U)$ is a direct summand of exactly two support $\tau$-tilting pairs.
\item Let $(M,P)$ and $(M^{'},P^{'})$ be two support $\tau$-tilting pair. Then there is an edge $M-M^{'}$ in the underlying graph of
 the Hasse quiver of $\sttilt(\Lambda)$ if and only if there exists basic almost complete support $\tau$-tilting pair $(N,U)$ such that $(N,U)$
  is a direct summand of $(M,P)$ and $(M^{'},P^{'})$.
\end{enumerate}   
\end{theorem}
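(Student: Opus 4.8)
The plan is to prove (1) and (2) simultaneously; the bridge between them is the explicit \emph{mutation} of a support $\tau$-tilting pair, which produces the ``second'' completion in (1) and an arrow of the Hasse quiver in (2). The substance, and the main obstacle, lies in showing there are \emph{at most} two completions.

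\textbf{At least two completions.} Let $(N,U)$ be a basic almost complete support $\tau$-tilting pair, so $|N|+|U|=|\Lambda|-1$ and any completion adjoins exactly one indecomposable, of the form $(N\oplus X,U)$ with $X$ an indecomposable module or $(N,U\oplus Q)$ with $Q$ an indecomposable projective. A Bongartz-type completion for $\tau$-rigid pairs produces one such completion $(M,P)$. From it I would run the mutation at the indecomposable distinguishing $(M,P)$ from $(N,U)$: in the case $(M,P)=(N\oplus X,U)$ one takes a minimal left $\add N$-approximation $f\colon X\to N'$ of $X$ and sets $X':=\operatorname{coker}f$, the new pair being $(N\oplus X',U)$ when $X'\neq 0$ and $(N,\,U\oplus Q')$ --- deleting the vertex of the support lost with $X$ --- when $X'=0$; the case $(M,P)=(N,U\oplus Q)$ is handled by the dual construction. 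By construction $(N,U)$ is a direct summand of the resulting pair, and I would check it is a $\tau$-rigid pair of size $|\Lambda|$, hence a support $\tau$-tilting pair, by chasing the exact sequence $0\to X\to N'\to X'\to 0$ against $\Hom_{\Lambda}(N,\tau-)$, $\Hom_{\Lambda}(-,\tau N)$ and $\Hom_{\Lambda}(U,-)$, using the $\tau$-rigidity of $(M,P)$ and the approximation property of $f$; a short argument shows it differs from $(M,P)$. This yields a second completion $(M',P')$.

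\textbf{At most two completions, and adjacency.} For uniqueness I would pass to the lattice of functorially finite torsion classes of $\mod\Lambda$ via the order-preserving injection $M\mapsto\mathrm{Fac}M$ of the Definition--Theorem. Any completion $(M'',P'')$ of $(N,U)$ gives a functorially finite torsion class $\mathrm{Fac}M''$ constrained by $N\in\mathrm{Fac}M''\subseteq {}^{\perp}(\tau N)$ and by the support condition coming from $U$; among these the largest is $\mathrm{Fac}M$ and the smallest $\mathrm{Fac}M'$, and the crux is that \emph{no} functorially finite torsion class lies strictly between them. One may see this either directly --- an intermediate torsion class would have to contain $N$ together with a subobject or quotient of $N'$ incompatible with the minimality of $f$ --- or by localising at $\mathrm{Fac}M$ to reduce to the classical statement that an almost complete tilting module has at most two complements (Riedtmann--Schofield, Happel--Unger), the support $\tau$-tilting framework supplying the second complement in the non-faithful case by letting the support shrink. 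Hence $\mathrm{Fac}M''\in\{\mathrm{Fac}M,\mathrm{Fac}M'\}$, and by injectivity $(M'',P'')\in\{(M,P),(M',P')\}$, which is (1); moreover the same analysis shows $\mathrm{Fac}M$ covers $\mathrm{Fac}M'$, so $M-M'$ is an edge of the Hasse quiver of $\sttilt(\Lambda)$.

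\textbf{Part (2).} ``$\Leftarrow$'' is now immediate: if $(N,U)$ is a common almost complete direct summand of $(M,P)$ and $(M',P')$, then by (1) these are exactly the two completions of $(N,U)$ and by the previous step they are joined by an edge. For ``$\Rightarrow$'', let $M-M'$ be an edge and, after swapping if necessary, assume $M>M'$, so that $\mathrm{Fac}M'\subsetneq\mathrm{Fac}M$ with nothing functorially finite strictly between. Choosing an indecomposable summand $X$ of $M$ that is not a summand of $M'$, one uses the length-one property to check that $(M/X,P)$ is an almost complete support $\tau$-tilting pair, that $(M,P)$ is one of its two completions, and that the other --- produced by the mutation construction above --- must be $(M',P')$, since $X$ is not a summand of $M'$ and, by (1), the two completions of $(M/X,P)$ differ in exactly one indecomposable slot. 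Thus $(M/X,P)$ is the required common direct summand. The decisive point throughout is the ``at most two'' / length-one step above; the rest is the cited Bongartz-type completion together with routine $\Hom_{\Lambda}(-,\tau-)$ and $\mathrm{Fac}$ bookkeeping.
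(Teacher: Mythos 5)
First, a remark on the comparison you asked for: the paper itself gives no proof of this statement --- it is imported wholesale from [AIR] (Theorem 2.18 and Corollary 2.34 there) --- so your proposal can only be measured against the source. Your overall strategy (Bongartz-type completion plus mutation along a minimal left $\add N$-approximation to produce two completions, an ``at most two'' argument via the order on $\sttilt(\Lambda)$, and then deducing (2)) is indeed the strategy of Adachi--Iyama--Reiten, and the exchange-sequence bookkeeping you describe for the ``at least two'' half is sound and matches their Theorem 2.30.

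The gaps sit exactly at the two places you yourself flag as the crux, and in both cases what you have written is not yet an argument. (i) For ``at most two completions'', your ``direct'' suggestion --- that an intermediate functorially finite torsion class would have to contain a sub- or quotient object of $N'$ contradicting minimality of $f$ --- does not work as stated: a torsion class strictly between $\mathrm{Fac}M'$ and $\mathrm{Fac}M$ need not be generated by anything visibly tied to the approximation $f$, and minimality of $f$ constrains maps out of $X$, not arbitrary torsion classes. The workable route is your second one: pass to the Bongartz completion of the almost complete pair and use the reduction $\sttilt_{U}(\Lambda)\simeq\sttilt(\Gamma_{U})$ of Theorem \ref{reduction theorem}; since $|\Gamma_{U}|=1$, the algebra $\Gamma_{U}$ is local and has exactly two basic support $\tau$-tilting modules, namely $\Gamma_{U}$ and $0$. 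But then you must either prove that reduction or check it is not circular, since the ``exactly two'' theorem is normally an ingredient of, not a consequence of, such reduction statements. (ii) In the forward direction of (2), the claim that an edge $M-M'$ of the Hasse quiver forces $(M,P)$ and $(M',P')$ to differ in a single indecomposable slot is essentially the content of the theorem: a covering relation $M>M'$ gives you no a priori control on how many summands change. Your step ``one uses the length-one property to check that $(M/X,P)$ is almost complete \dots and that the other completion must be $(M',P')$'' assumes what must be shown, namely that for $M>M'$ there exists a mutation $M''$ of $M$ with $M>M''\geq M'$; in [AIR] this is a separate argument (their Theorem 2.33), using the exchange sequences and the description of $\mathrm{Fac}$ of the mutated module, and it needs to be supplied here.
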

For a basic $\tau$-rigid module $U$, we denote by $\sttilt_{U}(\Lambda):=\{T\in \sttilt(\Lambda)\mid U\in \add T\}$.
Then $\sttilt_{U}(\Lambda)$ has a maximum element $T_{U}$ \cite{AIR}. We call $T_{U}$ the Bongartz completion of $U$.   
\begin{theorem}\rm{\cite{J}}
\label{reduction theorem}
Let $U$ be a basic partial $\tau$-tilting module and let $T_{U}$ be the Bongartz completion of $U$. If we set $\Gamma_{U}:=\End_{\Lambda}(T_{U})/\langle e_{U}\rangle$, then
  $|\Gamma|=|\Lambda|-|U|$ and $\sttilt_{U}(\Lambda)\simeq \sttilt(\Gamma_{U})$, where $e_{U}$ is the idempotent corresponding to the projective $\End_{\Lambda}(T_{U})$-module $\Hom_{\Lambda}(T_{U},U)$.
  
\end{theorem}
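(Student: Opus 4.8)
The plan is to derive this from Jasso's $\tau$-tilting reduction; below I describe the architecture and where the work sits.

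\textbf{The numerical part.} Since $U$ is a basic partial $\tau$-tilting module, its Bongartz completion $T_{U}$ is a basic $\tau$-tilting module, so $|T_{U}|=|\Lambda|$. The algebra $B:=\End_{\Lambda}(T_{U})$ is basic, and its indecomposable projective modules are exactly the $\Hom_{\Lambda}(T_{U},X)$ with $X$ ranging over the indecomposable direct summands of $T_{U}$; under this correspondence $e_{U}$ is the sum of the primitive idempotents attached to the summands of $U$. Forming $\Gamma_{U}=B/\langle e_{U}\rangle$ deletes exactly those $|U|$ indecomposable projectives (equivalently, the corresponding vertices of the Gabriel quiver of $B$), so $\Gamma_{U}$ is basic with $|\Gamma_{U}|=|B|-|U|=|\Lambda|-|U|$.

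\textbf{The reduction category.} The heart of the argument is a subcategory $\C(U)\subseteq\mod \Lambda$ cut out by $U$. First I would record two functorially finite torsion pairs attached to $U$: the Bongartz pair, whose torsion class is ${}^{\perp}(\tau U)=\operatorname{Fac}T_{U}$ with Ext-projective generator $T_{U}$; and the pair whose torsion class is $\operatorname{Fac}U$ (for $\tau$-rigid $U$ this is the smallest torsion class containing $U$, and it is functorially finite), with torsion-free class $U^{\perp}=\{X\mid\Hom_{\Lambda}(U,X)=0\}$. Since $\operatorname{Fac}U\subseteq\operatorname{Fac}T_{U}$, the intersection $\C(U):={}^{\perp}(\tau U)\cap U^{\perp}$ is, with the exact structure reflected from $\mod \Lambda$, an abelian category. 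One then proves: (a) $\C(U)$ is functorially finite in $\mod \Lambda$ and has a projective generator; (b) the composite of $\Hom_{\Lambda}(T_{U},-)$ with restriction along $B\twoheadrightarrow\Gamma_{U}$ induces an equivalence $F\colon\C(U)\xrightarrow{\sim}\mod \Gamma_{U}$; (c) under $F$ the bifunctors $\Ext_{\Lambda}^{1}(-,-)$ and $\overline{\Hom}_{\Lambda}(-,\tau_{\Lambda}-)$ on $\C(U)$ correspond to their $\Gamma_{U}$-analogues, so that $\tau_{\Lambda}$-rigid objects of $\C(U)$ are matched with $\tau_{\Gamma_{U}}$-rigid $\Gamma_{U}$-modules.

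\textbf{The bijection and the order.} Granting (a)--(c), I would run the correspondence through torsion classes using \cite{AIR}. For $M\in\sttilt_{U}(\Lambda)$ one has $U\in\add M$, hence $\operatorname{Fac}U\subseteq\operatorname{Fac}M$, while $M\leq T_{U}$ forces $\operatorname{Fac}M\subseteq\operatorname{Fac}T_{U}$; thus $M\mapsto\operatorname{Fac}M$ identifies $\sttilt_{U}(\Lambda)$ with the functorially finite torsion classes lying in the interval $[\operatorname{Fac}U,\operatorname{Fac}T_{U}]$ of $\mod \Lambda$. Intersecting with $\C(U)$ and transporting along $F$ turns this interval, lattice-isomorphically and preserving functorial finiteness, into the full lattice of functorially finite torsion classes of $\mod \Gamma_{U}$, which \cite{AIR} identifies with $\sttilt(\Gamma_{U})$. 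Composing the three order isomorphisms yields $\sttilt_{U}(\Lambda)\simeq\sttilt(\Gamma_{U})$; concretely the map sends $M=U\oplus M'$ to $F$ applied to the image of $M'$ under the reflection of $\mod \Lambda$ onto $\C(U)$, with inverse $N\mapsto U\oplus F^{-1}(N)$.

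\textbf{Where the difficulty lies.} Step (c) is the real obstacle: $\tau$ restricts neither to subcategories nor along algebra quotients in any naive way, so to see that $\Hom_{\Lambda}(X,\tau_{\Lambda}Y)=0$ for $X,Y\in\C(U)$ is equivalent to $\Hom_{\Gamma_{U}}(FX,\tau_{\Gamma_{U}}FY)=0$ one must compare minimal projective presentations across $F$ and combine the functorial finiteness of $\C(U)$ with an Auslander--Reiten-formula (``defect'') computation. The remaining inputs --- that $T_{U}$ genuinely completes $U$ with $\operatorname{Fac}T_{U}={}^{\perp}(\tau U)$, and the idempotent bookkeeping of the first paragraph --- are routine consequences of the constructions in \cite{AIR}.
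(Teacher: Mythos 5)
The paper does not prove this statement at all: it is quoted verbatim from Jasso's $\tau$-tilting reduction paper \cite{J}, so there is no internal proof to compare against. Measured against the actual source, your outline reproduces Jasso's architecture faithfully and accurately: the count $|\Gamma_{U}|=|\Lambda|-|U|$ via the idempotent bookkeeping, the two torsion pairs $({}^{\perp}(\tau U),\,(\tau U)^{\perp})$ with ${}^{\perp}(\tau U)=\mathrm{Fac}\,T_{U}$ and $(\mathrm{Fac}\,U,\,U^{\perp})$, the reduction category $\C(U)={}^{\perp}(\tau U)\cap U^{\perp}$ with the equivalence $\Hom_{\Lambda}(T_{U},-)\colon\C(U)\xrightarrow{\sim}\mod\Gamma_{U}$, and the identification of $\sttilt_{U}(\Lambda)$ with the functorially finite torsion classes in the interval $[\mathrm{Fac}\,U,\mathrm{Fac}\,T_{U}]$, transported to all functorially finite torsion classes of $\mod\Gamma_{U}$. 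The explicit formula $M=U\oplus M'\mapsto\Hom_{\Lambda}(T_{U},f_{U}M')$, with $f_{U}$ the torsion-free-part functor for $(\mathrm{Fac}\,U,U^{\perp})$, is also the one in \cite{J}. Your diagnosis that the compatibility of $\tau$-rigidity across the equivalence is the hard point is correct as well.

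That said, what you have written is a roadmap rather than a proof: items (a)--(c) are where essentially all of the mathematical content of the theorem lives, and they are asserted, not established. In particular (b), that $\Hom_{\Lambda}(T_{U},-)$ restricted to $\C(U)$ lands in $\mod\Gamma_{U}$ and is an equivalence, and (c), the transfer of the condition $\Hom(-,\tau-)=0$ through that equivalence (which in \cite{J} goes through a comparison of minimal projective presentations and an Auslander--Reiten duality computation), each require a page or more of argument that your proposal does not supply. Since the paper itself treats the statement as a black box from \cite{J}, citing that reference is the appropriate resolution; if you intend the proposal as a self-contained proof, steps (a)--(c) must actually be carried out.
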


\subsection{Hereditary case}
\label{subsec:2.3}
Let $Q$ be a finite connected acyclic quiver. We denote by $Q_{0}$\;(resp.\;$Q_{1}$) the set of vertices\;(resp.\;arrows) of $Q$. 
From now on, we assume that $\Lambda$ is a path algebra $kQ$. In this paper, for any paths
 $w:a_{0}\stackrel{\alpha_{1}}{\rightarrow}a_{1}\stackrel{\alpha_{2}}{\rightarrow}\cdots \stackrel{\alpha_{r}}{\rightarrow} a_{r}$
  and $w^{'}:b_{0}\stackrel{\beta_{1}}{\rightarrow}b_{1}\stackrel{\beta_{2}}{\rightarrow}\cdots \stackrel{\beta_{s}}{\rightarrow} b_{s}$ in $Q$,
   the product is defined by \[w\cdot w^{'}:=\left\{\begin{array}{ll}
 a_{0}\stackrel{\alpha_{1}}{\rightarrow}a_{1}\stackrel{\alpha_{2}}{\rightarrow}\cdots \stackrel{\alpha_{r}}{\rightarrow} a_{r}
 =b_{0}\stackrel{\beta_{1}}{\rightarrow}b_{1}\stackrel{\beta_{2}}{\rightarrow}\cdots \stackrel{\beta_{s}}{\rightarrow} b_{s} & \mathrm{if\ }a_{r}=b_{0} \\ 
 0 & \mathrm{if\ }a_{r}\neq b_{0},
 \end{array}\right.\]
 in $kQ$.
 For a module $M\in \mod \Lambda$, we denote by $Q(M)$ the full subquiver of $Q$
  with $Q(M)_{0}=\mathrm{supp}(M):=\{a\in Q_{0}\mid \dim Me_{a}>0\}$, where $e_{a}$ is the primitive idempotent of $\Lambda$ corresponding to $a\in Q_{0}$.
  By definition, we can regard $M$ as a sincere $kQ(M)$-module. 
\begin{definition}\normalfont\cite{AIR,IT}
A $\Lambda$-module $M$ is said to be a \emph{support tilting module} if $M$ is a tilting $kQ(M)$-module. The set of non-isomorphic basic support tilting module of $\Lambda$ is denoted by $\stilt(\Lambda)$.
\end{definition}
Since $\Lambda$ is a finite dimensional hereditary algebra, we have $\sttilt(\Lambda)=\stilt(\Lambda)\;(\mathrm{see}\ $\cite{AIR}) and the partial order on $\stilt(\Lambda)$ is defined as follows:
\[M\geq M^{'}\Leftrightarrow \Ext_{\Lambda}^{1}(M,M^{'})=0\ \mathrm{and}\ \mathrm{supp}(M')\subset \mathrm{supp}(M).\ \ (M,M^{'}\in \stilt(\Lambda))\]
\begin{theorem}\rm{\cite{J}}
\label{stilting reduction}
 Let $U$ be a basic partial tilting module and let $\Gamma_{U}$ be an algebra considered in Theorem\;\ref{reduction theorem}. Then $\Gamma_{U}$ is hereditary and we have  
$\stilt_{U}(\Lambda)\simeq \stilt(\Gamma_{U})$. 
\end{theorem}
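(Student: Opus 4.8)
The statement has two parts --- that $\Gamma_U$ is hereditary, and that $\stilt_U(\Lambda)\simeq\stilt(\Gamma_U)$ as posets --- and the plan is to deduce the second from the first. Since $\Lambda=kQ$ is hereditary we have $\sttilt(\Lambda)=\stilt(\Lambda)$, hence also $\sttilt_U(\Lambda)=\stilt_U(\Lambda)$, and the order on $\stilt(\Lambda)$ recorded in Subsection~\ref{subsec:2.3} is the restriction of the order on $\sttilt(\Lambda)$. Theorem~\ref{reduction theorem} already supplies a poset isomorphism $\stilt_U(\Lambda)=\sttilt_U(\Lambda)\simeq\sttilt(\Gamma_U)$ together with $|\Gamma_U|=|\Lambda|-|U|$. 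So once $\Gamma_U$ is known to be hereditary, $\sttilt(\Gamma_U)=\stilt(\Gamma_U)$ and the theorem follows with no further work; in particular the partial order needs no separate treatment, as it is already transported by Theorem~\ref{reduction theorem}.

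Thus the real task is to show that $\Gamma_U=\End_\Lambda(T_U)/\langle e_U\rangle$ is hereditary, where $T_U=U\oplus V$ is the Bongartz completion of $U$ and $e_U$ is the idempotent of $B:=\End_\Lambda(T_U)$ attached to the projective module $\Hom_\Lambda(T_U,U)$. I would identify $\mod\Gamma_U$ with a perpendicular subcategory of $\mod\Lambda$. First, the defining short exact sequence $0\to\Lambda\to V_0\to U'\to 0$ of the Bongartz completion (with $U'\in\add U$ and $V_0\in\add T_U$) together with $\operatorname{gl.dim}\Lambda\le 1$ gives $\Ext^1_\Lambda(V_0,X)=0$ as soon as $\Ext^1_\Lambda(U,X)=0$, whence $\mathrm{Fac}\,T_U=T_U^{\perp_1}=\{X\mid\Ext^1_\Lambda(U,X)=0\}$. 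Next, by the Brenner--Butler correspondence $\Hom_\Lambda(T_U,-)$ is an exact, fully faithful functor on $\mathrm{Fac}\,T_U$ sending $T_U$ to $B$ and $U$ to $e_U B$; since $\mod\Gamma_U=\{N\in\mod B\mid Ne_U=0\}$ and $\Hom_\Lambda(T_U,X)\cdot e_U\cong\Hom_\Lambda(U,X)$ for $X\in\mathrm{Fac}\,T_U$, this functor matches $\mod\Gamma_U$ with the right perpendicular category $\mathcal{C}:=\{X\in\mod\Lambda\mid\Hom_\Lambda(U,X)=0=\Ext^1_\Lambda(U,X)\}$. (This identification is the module-category form of Jasso's reduction; in the hereditary case Auslander--Reiten duality turns the $\tau$-perpendicular category appearing there into the classical $\mathcal{C}$.) Finally, $\mathcal{C}$ is a wide subcategory of $\mod\Lambda$, and by results of Geigle--Lenzing and Schofield the right perpendicular category of a partial tilting module over a hereditary algebra is equivalent to $\mod\Gamma$ for a hereditary algebra $\Gamma$ with $|\Lambda|-|U|$ simples. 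Comparing with Theorem~\ref{reduction theorem} forces $\Gamma_U$ to be Morita equivalent to $\Gamma$, hence hereditary.

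The delicate step is the middle one: checking that $\Hom_\Lambda(T_U,-)$ carries $\mathcal{C}$ onto \emph{all} of $\mod\Gamma_U$ --- equivalently, that every $\Gamma_U$-module lies in the essential image of $\Hom_\Lambda(T_U,-)$, not merely in a proper subcategory of it. This is precisely the content of Jasso's reduction theorem, so in practice one simply quotes it. If one prefers to avoid the Geigle--Lenzing/Schofield input for the hereditarity itself, there is an alternative dimension-shift argument internal to $\mathcal{C}$: a $2$-extension in $\mathcal{C}$ is an exact sequence $0\to Y\to E_1\to E_0\to X\to 0$ in $\mod\Lambda$ whose image term $Z=\operatorname{im}(E_1\to E_0)$, being a kernel of a morphism between objects of $\mathcal{C}$, again lies in $\mathcal{C}$; since $\operatorname{gl.dim}\Lambda\le 1$ the relevant connecting map of first extension groups is surjective, and the resulting lift stays in $\mathcal{C}$ because $\mathcal{C}$ is extension-closed, so the $2$-extension is trivial in $\mathcal{C}$. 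This gives $\Ext^2_{\mathcal{C}}(X,Y)=0$ for all $X,Y$, i.e. $\operatorname{gl.dim}\Gamma_U\le 1$. Either way, the whole theorem reduces to the hereditarity of $\Gamma_U$; the remaining content is bookkeeping around Theorem~\ref{reduction theorem}.
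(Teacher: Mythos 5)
The paper does not prove this statement at all: it is imported verbatim from \cite{J}, just as Theorem~\ref{reduction theorem} is, so there is no in-paper argument to compare yours against. Judged on its own terms, your reduction of the poset claim to the hereditarity of $\Gamma_U$ is correct and clean: once both algebras are known to be hereditary one has $\sttilt(\Lambda)=\stilt(\Lambda)$ and $\sttilt(\Gamma_U)=\stilt(\Gamma_U)$, and the partial order is already transported by Theorem~\ref{reduction theorem}, so only the hereditarity of $\Gamma_U$ needs work. Your treatment of that point is also essentially right: the identification $\mathrm{Fac}\,T_U=\{X\mid \Ext^1_\Lambda(U,X)=0\}$ via the Bongartz exact sequence and $\operatorname{gl.dim}\Lambda\le 1$, the passage to the perpendicular category $\mathcal{C}=\{X\mid\Hom_\Lambda(U,X)=0=\Ext^1_\Lambda(U,X)\}$ (which AR duality matches with Jasso's $\tau$-perpendicular category in the hereditary case), and the Geigle--Lenzing/Schofield theorem --- or, alternatively, your $\Ext^2$-vanishing computation inside the wide subcategory $\mathcal{C}$ --- are all standard and correct. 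The one caveat, which you flag yourself, is that both routes to hereditarity require the \emph{categorical} form of Jasso's reduction, namely an exact equivalence $\mathcal{C}\simeq\mod\Gamma_U$; this is strictly more than the bare poset isomorphism and the count $|\Gamma_U|=|\Lambda|-|U|$ that Theorem~\ref{reduction theorem} records, since one must check that every $\Gamma_U$-module lies in the torsion class $\mathcal{Y}(T_U)$ of $\End_\Lambda(T_U)$ and hence in the essential image of $\Hom_\Lambda(T_U,-)$ restricted to $\mathcal{C}$. Since the paper treats all of \cite{J} as a black box, leaning on that equivalence is consistent with the paper's own level of rigour, but it is the genuinely non-trivial input and a self-contained proof would still have to supply it.
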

Let $M\in \mod \Lambda$ and let $P$ be a projective $\Lambda$-module.
We set \[\Tau_{\Lambda}(M,P)=\Tau(M,P):=(\tau M\oplus \nu(P), M_{\mathrm{pr}} )\] where $\nu$ is the Nakayama functor and $M_{\mathrm{pr}}$ is a maximal projective direct summand. We also set \[\Tau^{-}(M,P):=(\tau^{-}M\oplus P, \nu^{-}M_{\mathrm{in}})\] where $M_{\mathrm{in}}$ is a maximal injective direct summand.
 Note that
\[\Tau\Tau^{-}(M,P)=(M,P)=\Tau^{-}\Tau(M,P).\]
\begin{lemma}
\label{1}\rm{\cite{AIR}}
$(M,P)$ is a support tilting pair if and only if $\Tau(M,P)$ is a support tilting pair. In particular $\Tau$ and $\Tau^{-}$ induces a graph automorphism
\[G(\stilt(\Lambda))\simeq G(\stilt(\Lambda)),\]
where $G(\stilt(\Lambda))$ is the underlying graph of the Hasse quiver of $\stilt(\Lambda)$.
\end{lemma}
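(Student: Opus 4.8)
The plan is to reduce everything to a short computation in the bounded derived category. Since $\Lambda=kQ$ is hereditary, support tilting pairs and support $\tau$-tilting pairs coincide (as recalled above), so it suffices to prove that $\Tau$ sends support $\tau$-tilting pairs to support $\tau$-tilting pairs and, dually, that $\Tau^{-}$ does so as well; the two implications of the stated equivalence then follow by composing with $\Tau^{-}$ and using $\Tau^{-}\Tau=\Tau\Tau^{-}=\Id$. So I fix a support $\tau$-tilting pair $(M,P)$, write $M=M_{0}\oplus M_{\mathrm{pr}}$ with $M_{\mathrm{pr}}$ the maximal projective direct summand of $M$, and observe $\Tau(M,P)=(\tau M_{0}\oplus\nu P,\ M_{\mathrm{pr}})$. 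I then have to check three things: (a) the numerical identity $|\tau M_{0}\oplus\nu P|+|M_{\mathrm{pr}}|=|\Lambda|$; (b) that $\tau M_{0}\oplus\nu P$ is $\tau$-rigid; and (c) that $\Hom_{\Lambda}(M_{\mathrm{pr}},\tau M_{0}\oplus\nu P)=0$.

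For the computations I would pass to $\Db(\mod\Lambda)$, whose Auslander--Reiten translation is the autoequivalence $\tau_{\mathsf D}:=\nu[-1]$ (here $\nu$ denotes the derived Nakayama functor, which also serves as the Serre functor and agrees with the ordinary one on projectives), and use the standard facts that $\tau_{\mathsf D}X=\tau X$ in degree $0$ when $X$ has no projective summand, $\tau_{\mathsf D}(P[1])=\nu P$ in degree $0$ for $P$ projective, and $\Hom_{\Db}(A[i],B[j])=0$ unless $j-i\in\{0,1\}$. Then (c) is Serre duality: $\Hom_{\Lambda}(M_{\mathrm{pr}},\tau M_{0})=\Hom_{\Db}(M_{\mathrm{pr}},\nu(M_{0})[-1])\cong D\Hom_{\Db}(M_{0}[-1],M_{\mathrm{pr}})=D\Ext^{1}_{\Lambda}(M_{0},M_{\mathrm{pr}})$ and $\Hom_{\Lambda}(M_{\mathrm{pr}},\nu P)\cong D\Hom_{\Lambda}(P,M_{\mathrm{pr}})$, both of which are direct summands of $D\Ext^{1}_{\Lambda}(M,M)=0$ and $D\Hom_{\Lambda}(P,M)=0$. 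For (b): $\Ext^{1}_{\Lambda}(-,\nu P)=0$ because $\nu P$ is injective, while applying the autoequivalence $\tau_{\mathsf D}$ and using $\nu P=\tau_{\mathsf D}(P[1])$ gives $\Ext^{1}_{\Lambda}(\tau M_{0},\tau M_{0})\cong\Ext^{1}_{\Lambda}(M_{0},M_{0})$ and $\Ext^{1}_{\Lambda}(\nu P,\tau M_{0})\cong\Hom_{\Db}(P[1],M_{0}[1])=\Hom_{\Lambda}(P,M_{0})$, all of which vanish; hence $\tau M_{0}\oplus\nu P$ is rigid, so (hereditary) $\tau$-rigid. Finally (a) is combinatorial: $\tau$ is injective on indecomposables with no projective summand and $\nu$ is a bijection from indecomposable projectives to indecomposable injectives, so $|\tau M_{0}|=|M_{0}|$ and $|\nu P|=|P|$; the three classes of summands are pairwise disjoint since $\tau M_{0}$ has no injective summand, $\nu P$ is injective, and by (c) $M_{\mathrm{pr}}$ shares no summand with $\tau M_{0}\oplus\nu P$, so the count equals $|M_{0}|+|P|+|M_{\mathrm{pr}}|=|M|+|P|=|\Lambda|$. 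The statement for $\Tau^{-}$ is the verbatim dual (swap $\tau,\nu,$ projective for $\tau^{-},\nu^{-},$ injective, or apply $D$ over $\Lambda^{\mathrm{op}}$).

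For the ``in particular'' part: once $\Tau$ and $\Tau^{-}$ are mutually inverse bijections on $\stilt(\Lambda)$, it remains to see $\Tau$ preserves edges of the Hasse quiver. The explicit formula shows $\Tau$ is compatible with direct-sum decompositions of $\tau$-rigid pairs, and the computations of (a)--(c) apply verbatim to an arbitrary $\tau$-rigid pair $(N,U)$ to show $\Tau(N,U)$ is a $\tau$-rigid pair with $|\Tau(N,U)|=|N|+|U|$; hence $\Tau$ carries almost complete support $\tau$-tilting pairs to almost complete support $\tau$-tilting pairs and carries a common direct summand of $(M,P)$ and $(M',P')$ to a common direct summand of $\Tau(M,P)$ and $\Tau(M',P')$. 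By the theorem of \cite{AIR} describing edges of the Hasse quiver of $\sttilt(\Lambda)$ via common almost complete summands, $\Tau$ (with inverse $\Tau^{-}$) is then a graph automorphism of $G(\stilt(\Lambda))$. I expect the most delicate point to be (b): $\tau M_{0}$ may itself have projective summands and $\nu P$ may have projective summands, which would obstruct the naive Auslander--Reiten-formula manipulations inside $\mod\Lambda$ — this is precisely why I run the argument in $\Db(\mod\Lambda)$, where $\tau_{\mathsf D}$ is a genuine autoequivalence with no such edge cases. Conceptually, the lemma expresses that $\Tau$ is the transport, under the bijection $(M,P)\mapsto M[0]\oplus P[1]$ between support $\tau$-tilting pairs and basic cluster-tilting objects of $\mathcal{C}_{\Lambda}=\Db(\mod\Lambda)/\tau_{\mathsf D}^{-1}[1]$, of the shift autoequivalence $[1]\cong\tau_{\mathsf D}$ of $\mathcal{C}_{\Lambda}$.
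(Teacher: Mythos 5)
Your argument is correct, but there is nothing in the paper to compare it against: the paper states this lemma with only the citation \cite{AIR} and supplies no proof of its own (it merely records the identity $\Tau\Tau^{-}=\Id=\Tau^{-}\Tau$ beforehand). Your proposal is a sound self-contained substitute. The reduction to the three conditions (a)--(c) is exactly what is needed, and the delicate points are all handled: $\tau M_{0}$ has no injective summand (since $\tau^{-}$ of an injective vanishes), the Serre-duality computations for (c) and the transport of $\Ext^{1}$ along $\tau_{\mathsf D}=\nu[-1]$ for (b) are valid because $M_{0}$ carries no projective summand, and the passage from ``rigid'' to ``$\tau$-rigid'' uses the hereditary hypothesis exactly where you flag it. The ``in particular'' part is also complete: since $\Tau$ commutes with direct sums of $\tau$-rigid pairs and preserves the count $|N|+|U|$, it carries common almost complete summands to common almost complete summands, which by the quoted theorem of \cite{AIR} characterizes the edges of $G(\stilt(\Lambda))$; together with the inverse $\Tau^{-}$ this gives the graph automorphism. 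Your closing remark identifying $\Tau$ with the shift on the cluster category under $(M,P)\mapsto M\oplus P[1]$ is indeed the conceptual content of the statement in \cite{AIR} and is consistent with the explicit formula.
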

\begin{example}
\label{exstilta3}  We give posets of support tilting modules for type $A_{3}$ quivers. \\
(1)\;Let $Q(1)=1\to 2 \to 3$ and $\Lambda(1)=kQ(1)$. Then $\stilt(\Lambda(1))$ is given by the following. 
 \[ 
{\unitlength 0.1in%
\begin{picture}( 47.9700, 23.4300)(  9.5000,-34.1300)%
\put(23.9000,-19.4000){\makebox(0,0)[lb]{$\begin{array}{c}P(1)\oplus I(2)\\ \oplus \tau^{-1}P(3)\end{array}$}}%
\put(30.5000,-12.0000){\makebox(0,0)[lb]{$P(1)\oplus P(2)\oplus P(3)$}}%
\put(9.5000,-16.4000){\makebox(0,0)[lb]{$\begin{array}{c}P(1)\oplus P(2)\\ \oplus \tau^{-1}P(3)\end{array}$}}%
\put(52.2000,-16.4000){\makebox(0,0)[lb]{$\begin{array}{c}P(1)\oplus I(1)\\ \oplus P(3)\end{array}$}}%
\put(38.6000,-19.3000){\makebox(0,0)[lb]{$\begin{array}{c}P(1)\oplus I(2)\\ \oplus I(1)\end{array}$}}%
%
\special{pn 8}%
\special{pa 2970 1160}%
\special{pa 1970 1360}%
\special{fp}%
\special{sh 1}%
\special{pa 1970 1360}%
\special{pa 2039 1367}%
\special{pa 2022 1350}%
\special{pa 2031 1327}%
\special{pa 1970 1360}%
\special{fp}%
%
\special{pn 8}%
\special{pa 4480 1170}%
\special{pa 5190 1350}%
\special{fp}%
\special{sh 1}%
\special{pa 5190 1350}%
\special{pa 5130 1314}%
\special{pa 5138 1337}%
\special{pa 5120 1353}%
\special{pa 5190 1350}%
\special{fp}%
%
\special{pn 8}%
\special{pa 1980 1560}%
\special{pa 2380 1760}%
\special{fp}%
\special{sh 1}%
\special{pa 2380 1760}%
\special{pa 2329 1712}%
\special{pa 2332 1736}%
\special{pa 2311 1748}%
\special{pa 2380 1760}%
\special{fp}%
%
\special{pn 8}%
\special{pa 3310 1790}%
\special{pa 3910 1790}%
\special{fp}%
\special{sh 1}%
\special{pa 3910 1790}%
\special{pa 3843 1770}%
\special{pa 3857 1790}%
\special{pa 3843 1810}%
\special{pa 3910 1790}%
\special{fp}%
%
\special{pn 8}%
\special{pa 5170 1550}%
\special{pa 4770 1750}%
\special{fp}%
\special{sh 1}%
\special{pa 4770 1750}%
\special{pa 4839 1738}%
\special{pa 4818 1726}%
\special{pa 4821 1702}%
\special{pa 4770 1750}%
\special{fp}%
\put(23.9000,-25.4000){\makebox(0,0)[lb]{$I(2)\oplus S(2)$}}%
\put(39.9000,-25.4000){\makebox(0,0)[lb]{$I(2)\oplus I(1)$}}%
%
\special{pn 8}%
\special{pa 2770 1970}%
\special{pa 2770 2370}%
\special{fp}%
\special{sh 1}%
\special{pa 2770 2370}%
\special{pa 2790 2303}%
\special{pa 2770 2317}%
\special{pa 2750 2303}%
\special{pa 2770 2370}%
\special{fp}%
%
\special{pn 8}%
\special{pa 4370 1970}%
\special{pa 4370 2370}%
\special{fp}%
\special{sh 1}%
\special{pa 4370 2370}%
\special{pa 4390 2303}%
\special{pa 4370 2317}%
\special{pa 4350 2303}%
\special{pa 4370 2370}%
\special{fp}%
\put(31.2000,-22.4000){\makebox(0,0)[lb]{$P(2)\oplus P(3)$}}%
%
\special{pn 8}%
\special{pa 2980 2170}%
\special{pa 1980 2370}%
\special{dt 0.045}%
\special{sh 1}%
\special{pa 1980 2370}%
\special{pa 2049 2377}%
\special{pa 2032 2360}%
\special{pa 2041 2337}%
\special{pa 1980 2370}%
\special{fp}%
%
\special{pn 8}%
\special{pa 3550 1240}%
\special{pa 3550 2040}%
\special{dt 0.045}%
\special{sh 1}%
\special{pa 3550 2040}%
\special{pa 3570 1973}%
\special{pa 3550 1987}%
\special{pa 3530 1973}%
\special{pa 3550 2040}%
\special{fp}%
\put(10.3000,-24.4000){\makebox(0,0)[lb]{$P(2)\oplus S(2)$}}%
%
\special{pn 8}%
\special{pa 1420 1670}%
\special{pa 1420 2270}%
\special{fp}%
\special{sh 1}%
\special{pa 1420 2270}%
\special{pa 1440 2203}%
\special{pa 1420 2217}%
\special{pa 1400 2203}%
\special{pa 1420 2270}%
\special{fp}%
\put(53.4000,-22.4000){\makebox(0,0)[lb]{$P(3)\oplus I(1)$}}%
%
\special{pn 8}%
\special{pa 5720 1670}%
\special{pa 5720 2070}%
\special{fp}%
\special{sh 1}%
\special{pa 5720 2070}%
\special{pa 5740 2003}%
\special{pa 5720 2017}%
\special{pa 5700 2003}%
\special{pa 5720 2070}%
\special{fp}%
\put(39.1000,-28.0000){\makebox(0,0)[lb]{$P(3)$}}%
\put(19.2000,-30.1000){\makebox(0,0)[lb]{$P(2)$}}%
%
\special{pn 8}%
\special{pa 1480 2450}%
\special{pa 1880 2850}%
\special{fp}%
\special{sh 1}%
\special{pa 1880 2850}%
\special{pa 1847 2789}%
\special{pa 1842 2812}%
\special{pa 1819 2817}%
\special{pa 1880 2850}%
\special{fp}%
%
\special{pn 8}%
\special{pa 2720 2560}%
\special{pa 2270 2880}%
\special{fp}%
\special{sh 1}%
\special{pa 2270 2880}%
\special{pa 2336 2858}%
\special{pa 2313 2849}%
\special{pa 2313 2825}%
\special{pa 2270 2880}%
\special{fp}%
\put(52.1000,-30.2000){\makebox(0,0)[rb]{$I(1)$}}%
%
\special{pn 8}%
\special{pa 4410 2570}%
\special{pa 4860 2890}%
\special{fp}%
\special{sh 1}%
\special{pa 4860 2890}%
\special{pa 4817 2835}%
\special{pa 4817 2859}%
\special{pa 4794 2868}%
\special{pa 4860 2890}%
\special{fp}%
%
\special{pn 8}%
\special{pa 5290 2270}%
\special{pa 4250 2660}%
\special{dt 0.045}%
\special{sh 1}%
\special{pa 4250 2660}%
\special{pa 4319 2655}%
\special{pa 4300 2641}%
\special{pa 4305 2618}%
\special{pa 4250 2660}%
\special{fp}%
\special{pa 4250 2660}%
\special{pa 4250 2660}%
\special{dt 0.045}%
%
\special{pn 8}%
\special{pa 5650 2320}%
\special{pa 5230 2880}%
\special{fp}%
\special{sh 1}%
\special{pa 5230 2880}%
\special{pa 5286 2839}%
\special{pa 5262 2837}%
\special{pa 5254 2815}%
\special{pa 5230 2880}%
\special{fp}%
\put(34.4000,-35.2000){\makebox(0,0)[lb]{$0$}}%
%
\special{pn 8}%
\special{pa 2280 3030}%
\special{pa 3360 3410}%
\special{fp}%
\special{sh 1}%
\special{pa 3360 3410}%
\special{pa 3304 3369}%
\special{pa 3310 3392}%
\special{pa 3290 3407}%
\special{pa 3360 3410}%
\special{fp}%
%
\special{pn 8}%
\special{pa 4860 3030}%
\special{pa 3670 3400}%
\special{fp}%
\special{sh 1}%
\special{pa 3670 3400}%
\special{pa 3740 3399}%
\special{pa 3721 3384}%
\special{pa 3728 3361}%
\special{pa 3670 3400}%
\special{fp}%
%
\special{pn 8}%
\special{pa 3910 2820}%
\special{pa 3510 3350}%
\special{dt 0.045}%
\special{sh 1}%
\special{pa 3510 3350}%
\special{pa 3566 3309}%
\special{pa 3542 3307}%
\special{pa 3534 3285}%
\special{pa 3510 3350}%
\special{fp}%
%
\special{pn 8}%
\special{pa 3590 2240}%
\special{pa 3930 2650}%
\special{dt 0.045}%
\special{sh 1}%
\special{pa 3930 2650}%
\special{pa 3903 2586}%
\special{pa 3896 2609}%
\special{pa 3872 2611}%
\special{pa 3930 2650}%
\special{fp}%
%
\special{pn 8}%
\special{pa 3310 2470}%
\special{pa 3910 2470}%
\special{fp}%
\special{sh 1}%
\special{pa 3910 2470}%
\special{pa 3843 2450}%
\special{pa 3857 2470}%
\special{pa 3843 2490}%
\special{pa 3910 2470}%
\special{fp}%
\end{picture}}\]
 (2)\;Let $Q(2)=1\rightarrow 2\leftarrow 3$ and $\Lambda(2)=kQ(2)$. Then $\stilt (\Lambda(2))$ is given by the following.
 \[
{\unitlength 0.1in%
\begin{picture}( 46.9000, 24.5500)( 16.7000,-30.5500)%
%
\special{pn 8}%
\special{pa 4170 760}%
\special{pa 4170 1310}%
\special{dt 0.045}%
\special{sh 1}%
\special{pa 4170 1310}%
\special{pa 4190 1243}%
\special{pa 4170 1257}%
\special{pa 4150 1243}%
\special{pa 4170 1310}%
\special{fp}%
\put(35.0000,-7.3000){\makebox(0,0)[lb]{$P(1)\oplus P(2) \oplus P(3)$}}%
\put(36.7000,-16.9000){\makebox(0,0)[lb]{$\begin{array}{c}P(1)\oplus I(2)\\ \oplus P(3)\end{array}$}}%
\put(28.3000,-20.3000){\makebox(0,0)[lb]{$\begin{array}{c}I(3)\oplus I(2)\\ \oplus P(3)\end{array}$}}%
\put(36.8000,-23.1000){\makebox(0,0)[lb]{$\begin{array}{c}I(1)\oplus I(2)\\ \oplus I(3)\end{array}$}}%
\put(21.1000,-10.7000){\makebox(0,0)[lb]{$ P(2) \oplus P(3)$}}%
\put(16.7000,-18.7000){\makebox(0,0)[lb]{$ I(3) \oplus P(3)$}}%
%
\special{pn 8}%
\special{pa 2450 1120}%
\special{pa 1980 1670}%
\special{fp}%
\special{sh 1}%
\special{pa 1980 1670}%
\special{pa 2039 1632}%
\special{pa 2015 1629}%
\special{pa 2008 1606}%
\special{pa 1980 1670}%
\special{fp}%
\put(24.6000,-28.0000){\makebox(0,0)[lb]{$ I(3)$}}%
%
\special{pn 8}%
\special{pa 2010 1910}%
\special{pa 2480 2640}%
\special{fp}%
\special{sh 1}%
\special{pa 2480 2640}%
\special{pa 2461 2573}%
\special{pa 2451 2595}%
\special{pa 2427 2595}%
\special{pa 2480 2640}%
\special{fp}%
%
\special{pn 8}%
\special{pa 3690 1460}%
\special{pa 3440 1600}%
\special{dt 0.045}%
\special{sh 1}%
\special{pa 3440 1600}%
\special{pa 3508 1585}%
\special{pa 3487 1574}%
\special{pa 3488 1550}%
\special{pa 3440 1600}%
\special{fp}%
\put(37.4000,-26.7000){\makebox(0,0)[lb]{$ I(1) \oplus I(3)$}}%
%
\special{pn 8}%
\special{pa 3670 2620}%
\special{pa 2820 2720}%
\special{dt 0.045}%
\special{sh 1}%
\special{pa 2820 2720}%
\special{pa 2889 2732}%
\special{pa 2873 2714}%
\special{pa 2884 2692}%
\special{pa 2820 2720}%
\special{fp}%
%
\special{pn 8}%
\special{pa 2800 1820}%
\special{pa 2600 1820}%
\special{dt 0.045}%
\special{sh 1}%
\special{pa 2600 1820}%
\special{pa 2667 1840}%
\special{pa 2653 1820}%
\special{pa 2667 1800}%
\special{pa 2600 1820}%
\special{fp}%
%
\special{pn 8}%
\special{pa 3430 2040}%
\special{pa 3690 2130}%
\special{dt 0.045}%
\special{sh 1}%
\special{pa 3690 2130}%
\special{pa 3634 2089}%
\special{pa 3640 2113}%
\special{pa 3620 2127}%
\special{pa 3690 2130}%
\special{fp}%
\put(55.4000,-20.3000){\makebox(0,0)[rb]{$\begin{array}{c}P(1)\oplus I(2)\\ \oplus I(1)\end{array}$}}%
\put(63.0000,-10.7000){\makebox(0,0)[rb]{$ P(1) \oplus P(2) $}}%
\put(67.0000,-18.7000){\makebox(0,0)[rb]{$ P(1) \oplus I(1)$}}%
%
\special{pn 8}%
\special{pa 5770 1090}%
\special{pa 6250 1710}%
\special{fp}%
\special{sh 1}%
\special{pa 6250 1710}%
\special{pa 6225 1645}%
\special{pa 6217 1668}%
\special{pa 6193 1670}%
\special{pa 6250 1710}%
\special{fp}%
\put(59.1000,-28.0000){\makebox(0,0)[rb]{$ I(1)$}}%
%
\special{pn 8}%
\special{pa 6360 1910}%
\special{pa 5890 2640}%
\special{fp}%
\special{sh 1}%
\special{pa 5890 2640}%
\special{pa 5943 2595}%
\special{pa 5919 2595}%
\special{pa 5909 2573}%
\special{pa 5890 2640}%
\special{fp}%
%
\special{pn 8}%
\special{pa 4680 1460}%
\special{pa 4930 1600}%
\special{dt 0.045}%
\special{sh 1}%
\special{pa 4930 1600}%
\special{pa 4882 1550}%
\special{pa 4883 1574}%
\special{pa 4862 1585}%
\special{pa 4930 1600}%
\special{fp}%
%
\special{pn 8}%
\special{pa 4700 2620}%
\special{pa 5550 2720}%
\special{dt 0.045}%
\special{sh 1}%
\special{pa 5550 2720}%
\special{pa 5486 2692}%
\special{pa 5497 2714}%
\special{pa 5481 2732}%
\special{pa 5550 2720}%
\special{fp}%
%
\special{pn 8}%
\special{pa 5570 1820}%
\special{pa 5770 1820}%
\special{dt 0.045}%
\special{sh 1}%
\special{pa 5770 1820}%
\special{pa 5703 1800}%
\special{pa 5717 1820}%
\special{pa 5703 1840}%
\special{pa 5770 1820}%
\special{fp}%
%
\special{pn 8}%
\special{pa 4940 2040}%
\special{pa 4680 2130}%
\special{dt 0.045}%
\special{sh 1}%
\special{pa 4680 2130}%
\special{pa 4750 2127}%
\special{pa 4730 2113}%
\special{pa 4736 2089}%
\special{pa 4680 2130}%
\special{fp}%
\put(38.5000,-11.4000){\makebox(0,0)[lb]{$ I(3)$}}%
%
\special{pn 8}%
\special{pa 3060 1010}%
\special{pa 3790 1080}%
\special{fp}%
\special{sh 1}%
\special{pa 3790 1080}%
\special{pa 3726 1054}%
\special{pa 3737 1075}%
\special{pa 3722 1094}%
\special{pa 3790 1080}%
\special{fp}%
%
\special{pn 8}%
\special{pa 5330 1010}%
\special{pa 4190 1080}%
\special{fp}%
\special{sh 1}%
\special{pa 4190 1080}%
\special{pa 4258 1096}%
\special{pa 4243 1077}%
\special{pa 4255 1056}%
\special{pa 4190 1080}%
\special{fp}%
%
\special{pn 8}%
\special{pa 3460 690}%
\special{pa 2660 890}%
\special{fp}%
\special{sh 1}%
\special{pa 2660 890}%
\special{pa 2730 893}%
\special{pa 2712 877}%
\special{pa 2720 854}%
\special{pa 2660 890}%
\special{fp}%
%
\special{pn 8}%
\special{pa 4920 700}%
\special{pa 5680 900}%
\special{fp}%
\special{sh 1}%
\special{pa 5680 900}%
\special{pa 5621 864}%
\special{pa 5628 886}%
\special{pa 5610 902}%
\special{pa 5680 900}%
\special{fp}%
%
\special{pn 8}%
\special{pa 4170 2340}%
\special{pa 4170 2540}%
\special{dt 0.045}%
\special{sh 1}%
\special{pa 4170 2540}%
\special{pa 4190 2473}%
\special{pa 4170 2487}%
\special{pa 4150 2473}%
\special{pa 4170 2540}%
\special{fp}%
\put(38.5000,-31.0000){\makebox(0,0)[lb]{$ 0$}}%
%
\special{pn 8}%
\special{pa 3940 1170}%
\special{pa 3929 1201}%
\special{pa 3919 1232}%
\special{pa 3908 1263}%
\special{pa 3888 1325}%
\special{pa 3878 1355}%
\special{pa 3848 1448}%
\special{pa 3812 1572}%
\special{pa 3788 1665}%
\special{pa 3781 1696}%
\special{pa 3775 1727}%
\special{pa 3768 1759}%
\special{pa 3763 1790}%
\special{pa 3757 1821}%
\special{pa 3753 1852}%
\special{pa 3748 1883}%
\special{pa 3745 1915}%
\special{pa 3742 1946}%
\special{pa 3740 1977}%
\special{pa 3738 2009}%
\special{pa 3736 2071}%
\special{pa 3736 2103}%
\special{pa 3737 2134}%
\special{pa 3738 2166}%
\special{pa 3740 2197}%
\special{pa 3742 2229}%
\special{pa 3745 2261}%
\special{pa 3748 2292}%
\special{pa 3752 2324}%
\special{pa 3756 2355}%
\special{pa 3760 2387}%
\special{pa 3765 2419}%
\special{pa 3770 2450}%
\special{pa 3775 2482}%
\special{pa 3787 2546}%
\special{pa 3793 2577}%
\special{pa 3821 2705}%
\special{pa 3828 2736}%
\special{pa 3836 2768}%
\special{pa 3843 2800}%
\special{pa 3867 2896}%
\special{pa 3874 2928}%
\special{pa 3880 2950}%
\special{fp}%
%
\special{pn 8}%
\special{pa 3874 2928}%
\special{pa 3880 2950}%
\special{fp}%
\special{sh 1}%
\special{pa 3880 2950}%
\special{pa 3882 2880}%
\special{pa 3866 2899}%
\special{pa 3843 2891}%
\special{pa 3880 2950}%
\special{fp}%
%
\special{pn 8}%
\special{pa 2820 2760}%
\special{pa 3790 3040}%
\special{fp}%
\special{sh 1}%
\special{pa 3790 3040}%
\special{pa 3731 3002}%
\special{pa 3739 3025}%
\special{pa 3720 3041}%
\special{pa 3790 3040}%
\special{fp}%
%
\special{pn 8}%
\special{pa 5550 2790}%
\special{pa 4100 3040}%
\special{fp}%
\special{sh 1}%
\special{pa 4100 3040}%
\special{pa 4169 3048}%
\special{pa 4153 3031}%
\special{pa 4162 3009}%
\special{pa 4100 3040}%
\special{fp}%
\end{picture}}\]
 (3)\;Let $Q(3)=Q(2)^{\mathrm{op}}=1\leftarrow 2\rightarrow 3$ and $\Lambda(3)=kQ(3)$. Then $\stilt(\Lambda(3))\simeq \stilt(\Lambda(2))^{\mathrm{op}}. $

\end{example}  
\section{Some remarks on tilting modules of path algebras of type $A$ and $\tilde{A}_{n,1}$ }
\subsection{Ext-vanishing conditions for path algebras of type $A$}
Let $Q$ be a quiver of type $A_{n}$ and $\stackrel{1}{\circ}-\stackrel{2}{\circ}-\cdots-\stackrel{n}{\circ}$ be its underlying graph.

For $i\in Q_{0}\setminus \{n\}$ we set 
\[\mathrm{d}(i):=\left\{\begin{array}{ll}
+ & \mathrm{if}\ i\rightarrow i+1 \\ 
- & \mathrm{if}\ i\leftarrow i+1.
\end{array}\right. \]
From Gabriel's Theorem $(\dimvec):\mod kQ\rightarrow \mathbb{Z}^{Q_{0}}_{\geq 0}$ induces a bijection 
\[\mathrm{ind}\;kQ\stackrel{1:1}{\leftrightarrow}\{\mathrm{positive\ roots\ of}\ A_{n}\}. \] Now denote by $L(i,j)$\;($0\leq i<j\leq n$) the indecomposable module corresponding to $\alpha_{i+1}+\alpha_{i+2}+\cdots \alpha_{j}$ where $\alpha_{i}$ is a
simple root corresponding to $i\in Q_{0}$.
Let $L(i^{'},j^{'}):=\tau L(i,j)$
Then we can check that
\[i^{'}=\left\{\begin{array}{ll}
i+1 & \mathrm{if}\  i\ \mathrm{and}\ i+1\ \mathrm{are\ not\ sink\ and}\ \mathrm{d}(i)=+ \\ 
i-1 & \mathrm{if}\ i\ \mathrm{and}\ i+1\ \mathrm{are\ not\ sink\ and}\ \mathrm{d}(i)=- \\ 
\mathrm{max}\{a< i \mid a\ \mathrm{is\ source\ of}\ Q\}-1 & \mathrm{if}\ i\ \mathrm{is\ sink\ of}\ Q \\ 
\mathrm{min}\{a> i \mid a\ \mathrm{is\ source\ of}\ Q\} & \mathrm{if}\ i+1\ \mathrm{is\ sink\ of}\ Q.
\end{array}\right. \]
\[j^{'}=\left\{\begin{array}{ll}
j+1 & \mathrm{if}\  j\ \mathrm{and}\ j+1\ \mathrm{are\ not\ sink\ and}\ \mathrm{d}(j)=+ \\ 
j-1 & \mathrm{if}\ j\ \mathrm{and}\ j+1\ \mathrm{are\ not\ sink\ and}\ \mathrm{d}(j)=- \\ 
\mathrm{max}\{a< j \mid a\ \mathrm{is\ source\ of}\ Q\}-1 & \mathrm{if}\ j\ \mathrm{is\ sink\ of}\ Q \\ 
\mathrm{min}\{a> j \mid a\ \mathrm{is\ source\ of}\ Q\} & \mathrm{if}\ j+1\ \mathrm{is\ sink\ of}\ Q.
\end{array}\right. \]

\begin{lemma}
\label{hv}
Let $X,Y\in \mathrm{ind}\;kQ$ and $f:=(f_{a})_{a\in Q_{0}}\in \Hom_{kQ}(X,Y)$. Then $f=0$ if and only if $\mathrm{supp}(X)\cap \mathrm{supp}(Y)= \emptyset$
 or  $\exists a\in \mathrm{supp}(X)\cap \mathrm{supp}(Y)$ such that $f_{a}=0$.

\end{lemma}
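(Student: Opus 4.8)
The plan is to exploit the description of the indecomposables of $kQ$ as ``interval modules'' and the consequent rigidity of morphisms between them. Recall that by Gabriel's theorem each $X\in\mathrm{ind}\;kQ$ is isomorphic to $L(i,j)$ for some $0\le i<j\le n$, and that $L(i,j)$ has $L(i,j)e_a\cong k$ for $a\in\mathrm{supp}L(i,j)=[i+1,j]$ and $L(i,j)e_a=0$ otherwise, with all structure maps attached to arrows inside the support equal (in suitable bases) to the identity $k\to k$. I would fix such bases for $X=L(i,j)$ and $Y=L(i',j')$ once and for all, so that each component $f_a\colon Xe_a\to Ye_a$ is a scalar whenever both $Xe_a$ and $Ye_a$ are nonzero, and is forced to be $0$ otherwise.

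The forward implication is immediate: if $f=0$ then every $f_a$ vanishes, so either $\mathrm{supp}(X)\cap\mathrm{supp}(Y)=\emptyset$, or, picking any $a$ in this nonempty intersection, $f_a=0$. For the converse I would split into two cases. If $\mathrm{supp}(X)\cap\mathrm{supp}(Y)=\emptyset$, then for every $a\in Q_0$ at least one of $Xe_a$, $Ye_a$ is $0$, so $f_a=0$ and hence $f=0$. Now assume there is $a_0\in I:=\mathrm{supp}(X)\cap\mathrm{supp}(Y)$ with $f_{a_0}=0$. The key observation is that $I$, being the intersection of the two intervals $[i+1,j]$ and $[i'+1,j']$ of the path $\stackrel{1}{\circ}-\cdots-\stackrel{n}{\circ}$, is again an interval; in particular any two consecutive elements $a,b$ of $I$ are joined by an arrow $\alpha$ of $Q$ (in one orientation or the other), and the structure maps of both $X$ and $Y$ along $\alpha$ are identities. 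Writing the commutativity relation $f_b\circ X_\alpha=Y_\alpha\circ f_a$ then yields $f_a=f_b$, so $f$ is constant on $I$; since $f_{a_0}=0$ we conclude $f_a=0$ for all $a\in I$. Finally, for $a\notin I$ at least one of $Xe_a$, $Ye_a$ vanishes, so $f_a=0$ there as well, and therefore $f=0$.

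The only point that deserves to be spelled out carefully is the connectedness of $I$ in the $A_n$-graph: this is what allows the single vanishing $f_{a_0}=0$ to propagate along $I$ and is the crux of the argument; it follows at once from the fact that an intersection of two intervals in a totally ordered set is an interval. Everything else is a direct unwinding of the definition of a morphism of representations, and I expect no genuine obstacle beyond being precise about this propagation step.
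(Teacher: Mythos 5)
Your proof is correct and follows essentially the same route as the paper: both arguments reduce to propagating the vanishing of $f_{a}$ along the intersection of the two supports, which is connected because it is an intersection of intervals in the $A_{n}$-graph, using that the structure maps of $X$ and $Y$ on arrows inside that intersection are isomorphisms. Your version merely makes the interval-module description of the indecomposables more explicit; the substance is identical.
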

\begin{proof}
Assume that $f=0$ and $\mathrm{supp}(X)\cap \mathrm{supp}(Y)\neq \emptyset$. Then there is a vertex $a\in \mathrm{supp}(X)\cap \mathrm{supp}(Y)$. It is obvious that $f_{a}=0$.
Note that $\mathrm{supp}(X)\cap \mathrm{supp}(Y)= \emptyset$ implies $f=0$. Hence we assume that $\exists a\in \mathrm{supp}(X)\cap \mathrm{supp}(Y)$ such that $f_{a}=0$.
Since the full sub quiver $Q'$ of $Q$ with $Q'_{0}=\mathrm{supp}(X)\cap \mathrm{supp}(Y)$ is connected, it is sufficient to check the following:
\[x\in Q'_{0}\mathrm{\ and}\ f_{x}=0\Rightarrow f_{y}=0\ \mathrm{for\ any\ neighbor}\ y\ \mathrm{of}\ x\ \mathrm{in\ }Q.\]
Let $x\in Q'_{0}$ such that $f_{x}=0$ and let $y$ be a neighbor of $x$. Denote by $\alpha\in Q_{1}$ the arrow between $x$ and $y$. We also denote by $X_{\alpha}$\;(resp.\;$Y_{\alpha}$) the linear map in $X$\;$(resp.\;Y)$ associated with $\alpha$. If $y\not\in Q'_{0}$, then it is obvious that $f_{y}=0$. Hence we may assume that $y\in Q'_{0}$.
Then we have that $X_{\alpha}$ and $Y_{\alpha}$ are isomorphisms. Since $f_{x}=0$, we have $f_{y}=0$.     

\end{proof}

\begin{lemma}
\label{ev}
$\Ext^{1}_{kQ}(L(i,j),L(k,l))=0=\Ext^{1}_{kQ}(L(k,l),L(i,j))$ if and only if one of the following conditions holds,
\begin{itemize}
\item[(1)]$[i,j]\cap [k,l]=\emptyset$,\\
\item[(2)]$i=k$ or $j=l$,\\
\item[(3)]$i<k<j<l$ and $\mathrm{d}(j)\neq \mathrm{d}(k)$,\\
\item[(3')]$k<i<l<j$ and $\mathrm{d}(i)\neq \mathrm{d}(l)$,\\
\item[(4)]$i<k<l<j$ and $\mathrm{d}(k)= \mathrm{d}(l)$,\\
\item[(4')]$k<i<j<l$ and $\mathrm{d}(i)= \mathrm{d}(j)$.
\end{itemize}
\end{lemma}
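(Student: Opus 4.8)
The plan is to translate the two $\Ext$-conditions into $\Hom$-conditions via the Auslander--Reiten formula and then read off the answer from Lemma~\ref{hv} together with the formula for $\tau$ recorded just above. First, since $kQ$ is hereditary, one has $\Ext^{1}_{kQ}(X,Y)\cong D\Hom_{kQ}(Y,\tau X)$ for all modules, with the convention $\tau P=0$ for projective $P$ (no ``bar'' is needed in the formula, because over a hereditary algebra $\tau X$ is non-injective whenever $X$ is indecomposable non-projective, so no nonzero map into it factors through an injective). Hence the statement to be proved is equivalent to
\[
\Hom_{kQ}\!\big(L(k,l),\,\tau L(i,j)\big)=0=\Hom_{kQ}\!\big(L(i,j),\,\tau L(k,l)\big),
\]
and, writing $\tau L(i,j)=L(i^{'},j^{'})$ and $\tau L(k,l)=L(k^{'},l^{'})$ (a projective module making the corresponding side vanish automatically), I may plug in the four-way formula for $i^{'},j^{'},k^{'},l^{'}$ displayed above.

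Next I would record a $\Hom$-vanishing criterion for two interval modules in terms of their endpoints and the symbols $\mathrm{d}(\cdot)$ only. Each $L(a,b)$ is thin (all its components are $0$- or $1$-dimensional), so $\dim_{k}\Hom_{kQ}(L(a,b),L(c,d))\le 1$ and a nonzero morphism must restrict, up to a scalar, to the identity on the overlap $[a,b]\cap[c,d]$; by Lemma~\ref{hv} such a map is a well-defined homomorphism exactly when the supports meet and, at each of the two boundary vertices of the overlap, the arrow of $Q$ pointing into the interior points the correct way whenever $L(a,b)$, resp.\ $L(c,d)$, extends past that vertex. Spelling this out gives an explicit non-vanishing criterion in terms of $a,b,c,d$ and $\mathrm{d}(\cdot)$.

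With these two ingredients, the proof becomes a finite case analysis on the relative position of $[i,j]$ and $[k,l]$ --- disjoint; sharing exactly one endpoint; one contained in the other; properly crossing --- each refined according to whether the relevant endpoints are sinks or sources of $Q$, since this is what selects among the branches of the $\tau$-formula. In every branch one substitutes $i^{'},j^{'}$ (and $k^{'},l^{'}$) into the criterion of the preceding paragraph and checks that the constraints coming from $\tau$ and from that criterion either clash (forcing $\Ext^{1}\neq 0$ on one side, i.e.\ we lie in the complement of (1)--(4')) or collapse exactly to the stated inequality together with a condition on two of the $\mathrm{d}(\cdot)$'s. The hard part will be the bookkeeping in this last step: keeping track of which interval endpoints are sinks or sources, of the boundary cases $i=0$, $j=n$ and of the injective modules, and of the asymmetry between $L(i,j)$ and $L(k,l)$. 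Getting the criterion of the second paragraph into precisely the right shape is what makes the case analysis routine rather than painful, so I would settle that formulation first.
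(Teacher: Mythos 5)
Your proposal follows essentially the same route as the paper's proof: translate both $\Ext^{1}$-groups into $\Hom$-groups via the Auslander--Reiten formula, substitute the explicit $\tau$-formula for the endpoints $i',j',k',l'$ recorded before the lemma, and decide vanishing with Lemma~\ref{hv} through a case analysis on the relative position of $[i,j]$ and $[k,l]$. The only divergence is minor: for the complementary cases where the Ext-groups do not both vanish, the paper exhibits explicit non-split short exact sequences such as $0\to L(k,l)\to L(i,l)\oplus L(k,j)\to L(i,j)\to 0$, whereas you propose to read off non-vanishing from the same $\Hom$-criterion; both work.
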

\begin{proof}
We consider the cases $(1),\;(2),\;(3),\;(3^{'}),\;(4),\;(4^{'})$ and $(5),\;(5^{'}),\;(6),\;(6^{'})$ where 
\begin{itemize}
\item[(5)]$i<k\leq j<l$ and $\mathrm{d}(j)= \mathrm{d}(k)$,\\
\item[(5')]$k<i\leq l<j$ and $\mathrm{d}(i)= \mathrm{d}(l)$,\\
\item[(6)]$i<k<l<j$ and $\mathrm{d}(k)\neq \mathrm{d}(l)$,\\
\item[(6')]$k<i<j<l$ and $\mathrm{d}(i)\neq \mathrm{d}(j)$.
\end{itemize}
Set $\mathrm{E}(X,Y):=\Ext^{1}_{kQ}(X,Y)\oplus \Ext^{1}_{kQ}(Y,X)$.  Then it is sufficient to show that
\[(c)\Rightarrow \left\{\begin{array}{ll}
E(L(i,j),L(k,l))=0 & \mathrm{if}\ c\in\{1,2,3,3^{'},4,4^{'}\} \\ 
E(L(i,j),L(k,l))\neq 0 & \mathrm{if}\ c\in\{5,5^{'},6,6^{'}\}
\end{array}\right.\]

($c=1$) In this case the assertion is obvious.

($c=2$) Let $L(i^{'},j^{'}):=\tau L(i,j)$ and $f=(f_{a})_{a\in Q_{0}}\in \Hom_{kQ}(L(k,l),L(i^{'},j^{'}))$. We consider the case $i=k$. First we assume $k=i<i^{'}$. Then we get $\mathrm{d}(i^{'})=+$. If $\mathrm{supp}(L(k,l))\cap \mathrm{supp}(L(i^{'},j^{'}))\neq \emptyset$, then it is easy to check that $i^{'}+1\in \mathrm{supp}(L(k,l))\cap \mathrm{supp}(L(i^{'},j^{'}))$. Moreover  $f_{i^{'}+1}=0$. In fact, we have following commutative diagram.
\[ 
{\unitlength 0.1in%
\begin{picture}( 11.0000,  9.8700)( 12.3000,-12.5700)%
\put(14.2000,-6.5000){\makebox(0,0)[lb]{$k$}}%
\put(22.4000,-6.7000){\makebox(0,0)[lb]{$k$}}%
\put(22.4000,-12.9000){\makebox(0,0)[lb]{$k$}}%
\put(14.2000,-13.0000){\makebox(0,0)[lb]{$0$}}%
\put(14.4000,-4.1000){\makebox(0,0)[lb]{$i'$}}%
\put(21.4000,-4.0000){\makebox(0,0)[lb]{$i'+1$}}%
%
\special{pn 8}%
\special{pa 1590 630}%
\special{pa 2190 630}%
\special{fp}%
\special{sh 1}%
\special{pa 2190 630}%
\special{pa 2123 610}%
\special{pa 2137 630}%
\special{pa 2123 650}%
\special{pa 2190 630}%
\special{fp}%
%
\special{pn 8}%
\special{pa 1590 1230}%
\special{pa 2190 1230}%
\special{fp}%
\special{sh 1}%
\special{pa 2190 1230}%
\special{pa 2123 1210}%
\special{pa 2137 1230}%
\special{pa 2123 1250}%
\special{pa 2190 1230}%
\special{fp}%
%
\special{pn 8}%
\special{pa 1470 690}%
\special{pa 1470 1140}%
\special{fp}%
\special{sh 1}%
\special{pa 1470 1140}%
\special{pa 1490 1073}%
\special{pa 1470 1087}%
\special{pa 1450 1073}%
\special{pa 1470 1140}%
\special{fp}%
%
\special{pn 8}%
\special{pa 2270 690}%
\special{pa 2270 1140}%
\special{fp}%
\special{sh 1}%
\special{pa 2270 1140}%
\special{pa 2290 1073}%
\special{pa 2270 1087}%
\special{pa 2250 1073}%
\special{pa 2270 1140}%
\special{fp}%
\put(12.3000,-9.6000){\makebox(0,0)[lb]{$f_{i'}$}}%
\put(23.3000,-9.5000){\makebox(0,0)[lb]{$f_{i'+1}$}}%
%
\special{pn 8}%
\special{pa 1720 560}%
\special{pa 1751 542}%
\special{pa 1781 530}%
\special{pa 1809 533}%
\special{pa 1837 550}%
\special{pa 1864 572}%
\special{pa 1891 588}%
\special{pa 1919 589}%
\special{pa 1948 574}%
\special{pa 1977 553}%
\special{pa 1980 550}%
\special{fp}%
%
\special{pn 8}%
\special{pa 1720 1160}%
\special{pa 1751 1142}%
\special{pa 1781 1130}%
\special{pa 1809 1133}%
\special{pa 1837 1150}%
\special{pa 1864 1172}%
\special{pa 1891 1188}%
\special{pa 1919 1189}%
\special{pa 1948 1174}%
\special{pa 1977 1153}%
\special{pa 1980 1150}%
\special{fp}%
\end{picture}}
\]
   By Lemma\;\ref{hv} we get $f=0$. Next we assume $i>i^{'}$.
Then we get $\mathrm{d}(i)=-$. If $\mathrm{supp}(L(k,l))\cap \mathrm{supp}(L(i^{'},j^{'}))\neq \emptyset$,
 then it is easy to check that $i+1=k+1\in \mathrm{supp}(L(k,l))\cap \mathrm{supp}(L(i^{'},j^{'}))$ and $f_{i+1}=0$. By Lemma\;\ref{hv} we get $f=0$.
Similarly, we can check that $f=0$ if $j=l$. Since the condition $(2)$ is symmetric, we also obtain $\Hom_{\Lambda}(L(i,j),\tau L(k,l))=0$.

($c=3$) Let $L(i^{'},j^{'}):=\tau L(i,j)$, $L(k^{'},l^{'}):=\tau L(k,l)$, 
 $f=(f_{a})_{a\in Q_{0}}\in \Hom_{kQ}(L(k,l),L(i^{'},j^{'}))$ and $g=(g_{a})_{a\in Q_{0}}\in \Hom_{kQ}(L(i,j),L(k^{'},l^{'}))$. First we show that $f=0$. 
 
 Suppose that $j<j^{'}$, then we get $\mathrm{d}(j)=+$ and $\mathrm{d}(k)=-$. In the case $i^{'}<k$, we have $k+1\in \mathrm{supp}(L(k,l))\cap\mathrm{supp}(L(i^{'},j^{'}))$ and $f_{k+1}=0$. This implies $f=0$. Therefore we may assume that $i^{'}\geq k > i$. In this case $\mathrm{d}(i^{'})=+$ and this shows $i^{'}>k$. Since $i<j\to j+1$, we have $i'\leq j<l$. Now it is easy to check that
 $i^{'}+1\in \mathrm{supp}(L(k,l))\cap\mathrm{supp}(L(i^{'},j^{'}))$ and $f_{i^{'}+1}=0$. This also implies $f=0$.
 
 If $j>j^{'}$ then we get $\mathrm{d}(j^{'})=-$.  We may assume that $\mathrm{supp}(L(k,l))\cap\mathrm{supp}(L(i^{'},j^{'}))\neq \emptyset$. Then $j^{'}\in \mathrm{supp}(L(k,l))\cap\mathrm{supp}(L(i^{'},j^{'}))$. Now is is easy to check that $f_{j^{'}}=0$. Hence, by Lemma\;\ref{hv}, we obtain $f=0$.
 
Note that $\mathrm{D}L(i,j)_{Q}=L(i,j)_{Q^{\mathrm{op}}}$ and $d_{Q}(x)=-d_{Q^{\mathrm{op}}}(x)$ for any $x\in Q_{0}=Q^{\mathrm{op}}_{0}$. Thus by using above result, we have
\[\Ext_{\Lambda}^{1}(L(k,l),L(i,j))\simeq \Ext_{\Lambda^{\mathrm{op}}}^{1}(\mathrm{D}L(i,j),\mathrm{D}L(k,l))=\Ext_{\Lambda^{\mathrm{op}}}^{1}(L(i,j)_{Q^{\mathrm{op}}},L(k,l)_{Q^{\mathrm{op}}})=0.\]
 
($c=4$) Let $L(i^{'},j^{'}):=\tau L(i,j)$, $L(k^{'},l^{'}):=\tau L(k,l)$, 
 $f=(f_{a})_{a\in Q_{0}}\in \Hom_{kQ}(L(k,l),L(i^{'},j^{'}))$ and $g=(g_{a})_{a\in Q_{0}}\in \Hom_{kQ}(L(i,j),L(k^{'},l^{'}))$. First we show $f=0$. Therefore we can assume $\mathrm{supp}(L(k,l))\cap\mathrm{supp}(L(i^{'},j^{'}))\neq \emptyset$. 
 
If $k<j^{'}<l$ then $j^{'}\in \mathrm{supp}(L(k,l))\cap\mathrm{supp}(L(i^{'},j^{'}))$ and $\mathrm{d}(j^{'})=-$. Now we can easily see that $f_{j^{'}}=0$. By Lemma\;\ref{hv}, we have
$f=0$.

If $j^{'}=l$ and $i^{'}<k$. Then $\mathrm{d}(k)=\mathrm{d}(l)=\mathrm{d}(j^{'})=-$ and $k+1\in \mathrm{supp}(L(k,l))\cap\mathrm{supp}(L(i^{'},j^{'}))$.
Now it is easy to check that $f_{k+1}=0$ and $f=0$.

If $j^{'}=l$ and $i^{'}\geq k$. Then $\mathrm{d}(k)=-$ and $\mathrm{d}(i^{'})=+$. This implies $i^{'}>k$ and $i^{'}+1\in \mathrm{supp}(L(k,l))\cap\mathrm{supp}(L(i^{'},j^{'}))$.
Now it is easy to check that $f_{i^{'+1}}=0$. By Lemma\;\ref{hv} we get $f=0$.

If $j^{'}>l$ and $\mathrm{d}(k)=\mathrm{d}(l)=+$. Then it is easy to check that
$l\in \mathrm{supp}(L(k,l))\cap\mathrm{supp}(L(i^{'},j^{'}))$ and $f_{l}=0$. Lemma\;\ref{hv} shows $f=0$.

If $j^{'}>l$, $\mathrm{d}(k)=\mathrm{d}(l)=-$ and $i^{'}<k$. Then we can easily
see that $k+1\in \mathrm{supp}(L(k,l))\cap\mathrm{supp}(L(i^{'},j^{'}))$ and
$f_{k+1}=0$ and $f=0$.

Therefore we may assume that $j^{'}>l$, $\mathrm{d}(k)=\mathrm{d}(l)=-$ and $i^{'}\geq k$. Then similar to the case $j^{'}=l$ and $i^{'}\geq k$, we have $f=0$.

By taking $k$-dual, we also have $g=0$.

($c=5$) In this case there is an non split exact sequence,
\[\left\{\begin{array}{ll}
0\rightarrow L(k,l)\rightarrow L(i,l)\oplus L(k,j)\rightarrow L(i,j)\rightarrow 0 & \mathrm{if}\ \mathrm{d}(j)=\mathrm{d}(k)=+  \\ 
0\rightarrow L(i,j)\rightarrow L(i,l)\oplus L(k,j)\rightarrow L(k,l)\rightarrow 0  & \mathrm{if}\ \mathrm{d}(j)=\mathrm{d}(k)=-.
\end{array}\right. \] 
In particular we get $E(L(i,j),L(k,l))\neq 0$.

($c=6$) Similar to the case $c=5$, we get $E(L(i,j),L(k,l))\neq 0.$

\end{proof}
\subsection{Some remarks on support tilting modules of $\tilde{A}_{n,1}$ }
In this subsection, we let $\Lambda=kQ$ where $Q$ is a quiver of type $\tilde{A}_{n,1}$, i.e., $Q$ is the following quiver:
\[
{\unitlength 0.1in%
\begin{picture}( 16.0000,  5.0200)( 16.6000, -8.2700)%
\put(16.6000,-8.6000){\makebox(0,0)[lb]{$0$}}%
%
\special{pn 8}%
\special{pa 1800 800}%
\special{pa 2100 800}%
\special{fp}%
\special{sh 1}%
\special{pa 2100 800}%
\special{pa 2033 780}%
\special{pa 2047 800}%
\special{pa 2033 820}%
\special{pa 2100 800}%
\special{fp}%
%
\special{pn 4}%
\special{sh 1}%
\special{ar 2170 800 8 8 0  6.28318530717959E+0000}%
\special{sh 1}%
\special{ar 2370 800 8 8 0  6.28318530717959E+0000}%
\special{sh 1}%
\special{ar 2570 800 8 8 0  6.28318530717959E+0000}%
\special{sh 1}%
\special{ar 2770 800 8 8 0  6.28318530717959E+0000}%
%
\special{pn 8}%
\special{pa 2900 800}%
\special{pa 3200 800}%
\special{fp}%
\special{sh 1}%
\special{pa 3200 800}%
\special{pa 3133 780}%
\special{pa 3147 800}%
\special{pa 3133 820}%
\special{pa 3200 800}%
\special{fp}%
\put(32.6000,-8.6000){\makebox(0,0)[lb]{$n$}}%
%
\special{pn 8}%
\special{pa 1760 730}%
\special{pa 1781 706}%
\special{pa 1803 683}%
\special{pa 1824 659}%
\special{pa 1846 636}%
\special{pa 1891 591}%
\special{pa 1915 569}%
\special{pa 1939 548}%
\special{pa 1964 527}%
\special{pa 1990 507}%
\special{pa 2017 487}%
\special{pa 2044 469}%
\special{pa 2073 451}%
\special{pa 2101 434}%
\special{pa 2131 418}%
\special{pa 2161 404}%
\special{pa 2192 390}%
\special{pa 2222 377}%
\special{pa 2254 366}%
\special{pa 2285 356}%
\special{pa 2317 347}%
\special{pa 2349 340}%
\special{pa 2381 334}%
\special{pa 2413 330}%
\special{pa 2445 327}%
\special{pa 2477 325}%
\special{pa 2509 326}%
\special{pa 2541 328}%
\special{pa 2572 332}%
\special{pa 2604 337}%
\special{pa 2634 345}%
\special{pa 2665 354}%
\special{pa 2695 364}%
\special{pa 2725 376}%
\special{pa 2754 389}%
\special{pa 2783 403}%
\special{pa 2812 418}%
\special{pa 2841 435}%
\special{pa 2869 452}%
\special{pa 2923 488}%
\special{pa 2950 508}%
\special{pa 2976 527}%
\special{pa 3002 548}%
\special{pa 3028 568}%
\special{pa 3053 589}%
\special{pa 3077 609}%
\special{pa 3125 651}%
\special{pa 3149 673}%
\special{pa 3173 694}%
\special{pa 3180 700}%
\special{fp}%
%
\special{pn 8}%
\special{pa 3173 694}%
\special{pa 3180 700}%
\special{fp}%
\special{sh 1}%
\special{pa 3180 700}%
\special{pa 3142 641}%
\special{pa 3140 665}%
\special{pa 3116 672}%
\special{pa 3180 700}%
\special{fp}%
\end{picture}}\]
\begin{lemma}[See \cite{KT}, Lemma\;6.1]
\label{dimup}
Let $\Delta$ be a acyclic quiver. Assume that for any vertex $a\in \Delta_{0}$, we have
$\#\{\alpha\in \Delta_{1}\mid s(\alpha)=a\ \mathrm{or\ }t(\alpha)=a\}\geq 2.$ Then any irreducible morphisms between indecomposable pre-projective
modules of $k\Delta$ are injective. Dually any irreducible morphism between indecomposable pre-injective modules are surjective.
\end{lemma}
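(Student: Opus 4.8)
The plan is to reduce the statement to the combinatorial shape of the preprojective component $\mathcal{P}$ of $\Gamma(\mod k\Delta)$, together with the elementary fact that an irreducible morphism is either a monomorphism or an epimorphism. I may assume $\Delta$ connected (otherwise argue on each connected component). First I would recall that, since every vertex of $\Delta$ has degree $\geq 2$, $\Delta$ is not a Dynkin quiver, so $\mathcal{P}$ is infinite and has the standard description (see \cite{ASS}): its vertices are the modules $\tau^{-m}P(i)$ with $m\geq 0$, $i\in\Delta_0$, and for each arrow $i\to j$ of $\Delta$ the arrows of $\mathcal{P}$ are $\tau^{-m}P(j)\to\tau^{-m}P(i)$ and $\tau^{-m}P(i)\to\tau^{-(m+1)}P(j)$, $m\geq 0$. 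From this it follows that each vertex $\tau^{-m}P(j)$ has exactly $\deg_\Delta(j)$ arrows starting at it and, when $m\geq 1$, exactly $\deg_\Delta(j)$ arrows ending at it. Since $\deg_\Delta(j)\geq 2$ for all $j$, \emph{every non-projective vertex of $\mathcal{P}$ receives at least two arrows}; this is the only place the hypothesis enters, and it is the heart of the matter.

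Next I would fix a topological order $\Delta_0=\{1,\dots,n\}$ (arrows go from smaller to larger) and set $h(\tau^{-m}P(i)):=2nm+(n-i)$; a direct check shows $h$ strictly increases along every arrow of $\mathcal{P}$. The key assertion is then: \emph{for every arrow $X\to Y$ of $\mathcal{P}$ one has $\dim_k X<\dim_k Y$}, to be proved by induction on $h(Y)$. If $Y=P(j)$ is projective, an arrow into $Y$ is the inclusion of a direct summand of $\operatorname{rad}P(j)$, and $\dim_k\operatorname{rad}P(j)<\dim_k P(j)$ because $\Top P(j)\neq 0$. If $Y=\tau^{-m}P(j)$ with $m\geq 1$, take the almost split sequence $0\to\tau Y\to E\to Y\to 0$, so $\dim_k Y=\dim_k E-\dim_k\tau Y$, and write $E=\bigoplus_l X_l^{a_l}$ with the $X_l$ pairwise non-isomorphic indecomposables; by the structure above $\sum_l a_l=\deg_\Delta(j)\geq 2$, for each $l$ there is an arrow $\tau Y\to X_l$ and an arrow $X_l\to Y$, and $h(\tau Y)<h(X_l)<h(Y)$. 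Given the summand $X_{l_0}$ we want to bound, I would pick $l_1$ with $a_{l_1}\geq 1$, choosing $l_1\neq l_0$ if possible and otherwise $l_1=l_0$ (which then forces $a_{l_0}\geq 2$); in either case $\dim_k E\geq\dim_k X_{l_0}+\dim_k X_{l_1}$. Applying the inductive hypothesis to the arrow $\tau Y\to X_{l_1}$ gives $\dim_k\tau Y<\dim_k X_{l_1}$, whence $\dim_k Y=\dim_k E-\dim_k\tau Y\geq\dim_k X_{l_0}+\dim_k X_{l_1}-\dim_k\tau Y>\dim_k X_{l_0}$, completing the induction.

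Finally I would conclude: let $f\colon X\to Y$ be an irreducible morphism between indecomposable preprojective modules. If $Y$ is projective, $f$ factors through $\operatorname{rad}Y\hookrightarrow Y$ by a split monomorphism, so $f$ is a monomorphism; if $Y$ is non-projective, then up to an automorphism of $X$ the map $f$ is a component $X_{l_0}\to Y$ of the minimal right almost split map $E\to Y$, and since $\dim_k X=\dim_k X_{l_0}<\dim_k Y$ by the key assertion, $f$ cannot be an epimorphism, hence is a monomorphism. For the dual statement I would note that $k\Delta^{\mathrm{op}}$ again satisfies the hypothesis, that the duality $D=\Hom_k(-,k)$ carries the preinjective component of $\mod k\Delta$ to the preprojective component of $\mod k\Delta^{\mathrm{op}}$ and interchanges monomorphisms with epimorphisms, and then apply the first part. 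The main obstacle is the first paragraph: pinning down the precise shape of $\mathcal{P}$ and, above all, the fact that each non-projective vertex has at least two incoming arrows; once that is in place, the rest is bookkeeping with dimension vectors via the mesh relations.
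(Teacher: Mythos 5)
Your argument is correct, but there is nothing in the paper to compare it against: the paper does not prove this lemma at all, it simply imports it from Kerner--Takane \cite{KT} (Lemma 6.1). So what you have written is a self-contained proof of a quoted result. Your route is essentially the standard one for statements of this type: observe that the degree hypothesis forces $\Delta$ to be non-Dynkin (a tree always has a leaf), so the preprojective component is the full translation quiver $\mathbb{N}\Delta^{\mathrm{op}}$ and every non-projective vertex $\tau^{-m}P(j)$ has exactly $\deg_\Delta(j)\geq 2$ incoming arrows counted with multiplicity; then prove $\dim_k X<\dim_k Y$ for every arrow $X\to Y$ by induction along a height function, using additivity of dimension on the almost split sequence $0\to\tau Y\to E\to Y\to 0$ together with a second summand of $E$ to absorb $\dim_k\tau Y$; finally invoke the fact that an irreducible morphism is a proper mono or a proper epi. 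All the individual steps check out: the height function $h(\tau^{-m}P(i))=2nm+(n-i)$ does strictly increase along both types of arrows of $\mathbb{N}\Delta^{\mathrm{op}}$, the case split $l_1\neq l_0$ versus $a_{l_0}\geq 2$ correctly exploits $\sum_l a_l\geq 2$, the projective case via $\operatorname{rad}P(j)$ is sound, and the dual statement follows from $D$ and $\Delta^{\mathrm{op}}$ as you say. The only thing I would tighten is the final reduction for non-projective $Y$: you do not need to realize $f$ as a component of the minimal right almost split map up to automorphism; the mere existence of an irreducible $X\to Y$ already gives an arrow in $\Gamma(\mod k\Delta)$, hence $\dim_k X<\dim_k Y$, hence $f$ is not epi, hence mono.
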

\begin{remark}
\label{prin}
Lemma\;\ref{dimup} implies that $\tau^{<0}P(i)$ is sincere for any $i\in Q_{0}$. We also have that $\tau^{>0}I(i)$ is sincere for any $i\in Q_{0}$. 
In fact, for any non-projective indecomposable pre-projective\;(resp. non-injective indecomposable pre-injective)  module $M$, there is a path from $P(0)$ to $M$\;(resp. $M$ to $I(n)$) in the Auslander-Reiten quiver of $\mod \Lambda$ and 
$P(0)$\;(resp. $I(n)$) is sincere. This shows that there is no support tilting module which contains both a non-zero pre-projective direct summand and a non-zero pre-injective direct summand.
\end{remark} 
\begin{lemma}
\label{l1}
Let $(M,P)$ be a support-tilting pair in $\mod \Lambda$. Then there exists
$r\in \mathbb{Z}$ such that $(M^{'},P^{'}):=\Tau^{r}\;(M,P)$ with $P^{'}\neq 0.$

\end{lemma}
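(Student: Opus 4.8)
The plan is to argue by contradiction, using the explicit description of $\Tau,\Tau^{-}$ on pairs with zero projective part together with the fact that a tame hereditary algebra has no tilting module all of whose summands are regular. So suppose that the projective part of $\Tau^{r}(M,P)$ vanishes for every $r\in\Z$, and write $\Tau^{r}(M,P)=(M_{r},P_{r})$ (each such pair is again a support tilting pair by Lemma \ref{1}). From $P_{0}=P=0$ together with $\sttilt(\Lambda)=\stilt(\Lambda)$ and $|M|+|P|=|\Lambda|$, the module $M=M_{0}$ must be a basic tilting $kQ$-module, so $|M|=n+1$. Reading off from $\Tau(M_{r},0)=(\tau M_{r},(M_{r})_{\mathrm{pr}})$ and $\Tau^{-}(M_{r},0)=(\tau^{-}M_{r},\nu^{-}(M_{r})_{\mathrm{in}})$ the identities $P_{r+1}=(M_{r})_{\mathrm{pr}}$ and $P_{r-1}=\nu^{-}(M_{r})_{\mathrm{in}}$, the assumption $P_{r}=0$ for all $r$ forces, inductively, that $M_{r}=\tau^{r}M$ for all $r\in\Z$ and that no $M_{r}$ has a non-zero projective or injective direct summand.

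Next I would deduce that every indecomposable summand of $M$ is regular. If $X\cong\tau^{-m}P(i)$ were a pre-projective summand of $M$ with $m\ge 0$, then applying $\tau$ $m$ times would exhibit $P(i)$ as a direct summand of $M_{m}=\tau^{m}M$, contradicting $(M_{m})_{\mathrm{pr}}=0$; dually $M$ can have no pre-injective summand. (One should note the mild bookkeeping point that if some intermediate $M_{r}$ with $0\le r\le m$ had already acquired a projective summand the contradiction would have arisen even earlier, so the identification $M_{r}=\tau^{r}M$ is legitimate up to the relevant index, using the pre-projective/regular/pre-injective trichotomy of Remark \ref{prin}.) Hence $M$ would be a tilting module all of whose indecomposable summands are regular.

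It remains to rule this out, and this is the one non-formal input. The dimension vectors of the indecomposable summands of a rigid module over a hereditary algebra are linearly independent in $K_{0}(\Lambda)\cong\Z^{n+1}$ (order the summands so that the matrix of the Euler form becomes upper unitriangular), whereas the dimension vectors of regular $\Lambda$-modules all lie in the corank-one sublattice $\ker\partial$ cut out by the defect; therefore a regular rigid module has at most $n$ non-isomorphic indecomposable summands, while $|M|=n+1$. Alternatively, one may use that the non-homogeneous tubes of $k\tilde A_{n,1}$ have ranks $n$ and $1$, so a rigid regular module has at most $(n-1)+(1-1)=n-1$ indecomposable summands. Either way we get $n+1\le n$, a contradiction. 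Hence $P_{r}\ne 0$ for some $r\in\Z$, and setting $(M',P')=\Tau^{r}(M,P)$ for such an $r$ proves the lemma.

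The main obstacle is exactly the last step — the non-existence of a regular tilting module for $k\tilde A_{n,1}$ — which is where tame hereditary structure theory (tube ranks, or the defect/Euler-form argument) genuinely enters; everything before it is formal manipulation with $\Tau$ and with the standard trichotomy for indecomposables over a tame hereditary algebra.
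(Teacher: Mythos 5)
Your argument is correct and follows essentially the same route as the paper: reduce to the case $P=0$ so that $M$ is a tilting module, observe that $M$ must have a preprojective or preinjective indecomposable summand, and apply $\Tau^{\pm}$ enough times to turn that summand into a projective appearing in the second component of the pair. The only real difference is that the paper simply cites Happel--Ringel (\cite[Lemma~3.1]{HR}) for the nonexistence of a regular tilting module over a tame hereditary algebra, whereas you supply a proof of that input via the defect/tube-rank count; your contradiction framing is otherwise a repackaging of the same idea.
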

\begin{proof}
We can assume $P=0$ and in this case $M$ is a tilting module of $\Lambda$. Since $\Lambda$ is a tame algebra, $M$ has either a pre-projective direct summand or a pre-injective direct summand (\cite[Lemma\;3.1]{HR}). If $\tau^{-r}\;P(i)\in \mathrm{add}\;M$, then $(0,P(i))\in \mathrm{add}\;\Tau^{r+1}\;(M,P)$.  If $\tau^{r}I(i)\in \mathrm{add}\;M$, then $(0,P(i))\in \mathrm{add}\;\Tau^{-r-1}\;(M,P).$ 
\end{proof}

\begin{proposition}
\label{representative}
Let $(M,P)$ be a support-tilting pair in $\mod \Lambda$. Then there is a unique $(r,i)\in \mathbb{Z}\times Q_{0}$ such that $\Tau^{r}\;(M,P)\in \mathrm{s\text{-}tilt}_{X_{i}}(\Lambda)$, where \[X_{i}:=\left\{\begin{array}{ll}
(P(i+1),P(i)) &\mathrm{if}\ 0\leq i\leq n-1  \\ 
(0,P(1)\oplus P(n)) & \mathrm{if}\ i=n. 
\end{array}\right. \]
\end{proposition}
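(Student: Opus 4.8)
The plan is to read off $(r,i)$ from the structure of the $\Tau$-orbit of $(M,P)$, using Lemma~\ref{l1} to reach a convenient representative and Lemma~\ref{ev} (the $\Ext$-vanishing criterion in type $A$) to identify $i$. First, by Lemma~\ref{l1} I may replace $(M,P)$ by a $\Tau$-translate and assume $P\ne 0$, i.e.\ $\mathrm{supp}(M)\subsetneq Q_0$; then $Q(M)$ is a disjoint union of linearly oriented type-$A$ quivers. Combining Remark~\ref{prin} with the identities $\tau(\text{projective})=0$, $\tau^{-}(\text{injective})=0$ and the fact that a projective module is preprojective, one sees that if $M$ had a non-projective preprojective (resp.\ non-injective preinjective) summand, then $\Tau(M,P)$ (resp.\ $\Tau^{-}(M,P)$) would be a support tilting pair whose module part contains both a preprojective and a preinjective summand, contradicting Remark~\ref{prin}. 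Hence $M$ is of exactly one of the types: (P) $M=M_{\mathrm{pr}}\oplus(\text{regular})$ with $M_{\mathrm{pr}}\ne 0$; (I) the injective dual; (R) $M$ regular or $M=0$. Since $\tau^{-}$ of a projective summand of $M$ is sincere (Remark~\ref{prin}), a type (P) pair has $\Tau^{-}(M,P)$ of zero projective part (i.e.\ a tilting module), dually for type (I); so within each $\Tau$-orbit the set of $r$ with $\Tau^{r}(M,P)$ of nonzero projective part is a finite interval $[a,b]$, with $\Tau^{a}(M,P)$ of type (P) or (R) and $\Tau^{b}(M,P)$ of type (I) or (R) (finiteness uses, besides the above, that every tilting module has a preprojective or preinjective summand).

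For existence I would normalise to the left end $(M',P'):=\Tau^{a}(M,P)$. If $(M',P')$ is of type (P), pick $P(j)\in\add M'$ with $j$ minimal; sincerity of $P(0)$ forces $j\ge 1$, so $i:=j-1\in\{0,\dots,n-1\}$. I claim $i\notin\mathrm{supp}(M')$: otherwise the indecomposable summand of $M'$ covering vertex $i$ is some $L(a'-1,b)$ with $a'\le i<i+1=j$, and (since all arrows along the relevant vertices point forward) comparing it with $P(j)=L(j-1,n)$ via Lemma~\ref{ev} either exhibits a projective summand $P(a')$ of $M'$ with $a'<j$, contradicting the minimality of $j$, or yields $\Ext^{1}\ne 0$ (Lemma~\ref{ev}(5)), contradicting that $M'$ is tilting. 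So $P(i)\in\add P'$ and $P(i+1)=P(j)\in\add M'$, i.e.\ $(M',P')\in\stilt_{X_i}(\Lambda)$; in fact this already shows that every type (P) pair lies in $\stilt_{X_{j_{\min}-1}}(\Lambda)$. If $(M',P')$ is of type (R), one checks --- by tracking how the supports of the regular summands move under $\tau^{\pm}$ across the distinguished arc $\{n,0\}$ --- that after finitely many further $\Tau$-steps one reaches a pair with $\mathrm{supp}(M')\subseteq\{2,\dots,n-1\}$, which lies in $\stilt_{X_n}(\Lambda)$; the case $M'=0$ gives $(0,\Lambda)\in\stilt_{X_n}(\Lambda)$.

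For uniqueness, the subsets $\stilt_{X_0}(\Lambda),\dots,\stilt_{X_n}(\Lambda)$ are pairwise disjoint: a pair in $\stilt_{X_i}(\Lambda)$ with $i\le n-1$ has $P(i+1)\in\add M$, hence $\mathrm{supp}(M)\supseteq\{i+1,\dots,n\}$ and minimal projective-summand index equal to $i+1$, whereas $\stilt_{X_n}(\Lambda)$ consists of type (R) pairs, which have no projective summand; so membership is mutually exclusive. Moreover, a member of $\stilt_{X_i}(\Lambda)$ with $i\le n-1$ is of type (P), hence occupies the left end $r=a$ of its orbit interval --- the unique pair there --- and a member of $\stilt_{X_n}(\Lambda)$ is singled out in its orbit by the $\{n,0\}$-normalisation; thus each $\Tau$-orbit meets $\bigsqcup_{i=0}^{n}\stilt_{X_i}(\Lambda)$ in exactly one pair, which gives the unique $(r,i)$.

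The step I expect to be the main obstacle is the type (R) bookkeeping: describing the regular indecomposables of $\tilde{A}_{n,1}$ that can occur (exactly the non-sincere indecomposables that are neither projective nor injective) and controlling how their supports move under $\tau^{\pm}$ relative to the arc $\{n,0\}$, so that the normalisation $\mathrm{supp}(M')\subseteq\{2,\dots,n-1\}$ --- and hence the index $i=n$ together with the translation number $r$ --- is forced and unambiguous.
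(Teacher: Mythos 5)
Your overall architecture---normalise the $\Tau$-orbit, use Remark~\ref{prin} to sort the module part into types (P), (I), (R), read off $i$ from the smallest projective summand, and deduce uniqueness from disjointness of the $\stilt_{X_i}(\Lambda)$ together with the orbit-interval structure---is reasonable and runs parallel to the paper, which normalises instead via $a=\min I$, $b=\max I$ for $P=\bigoplus_{i\in I}P(i)$; the two bookkeepings agree where they overlap. Your type (P) step has a repairable hole: comparing \emph{the} summand covering $j-1$ with $P(j)$ via Lemma~\ref{ev} only yields a contradiction when that summand's support also reaches $j$ (a summand supported on $[c,j-1]$ is Ext-orthogonal to $P(j)$ and gives nothing), so you must first produce a summand of $M'$ covering both $j-1$ and $j$, which needs one more application of Lemma~\ref{ev}(5) to a \emph{pair} of summands; also the subcase where the support strictly contains $\mathrm{supp}\,P(j)$ invokes condition (6) rather than (5).

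The genuine gap is the type (R) case, which you defer. First, the normalisation you propose is unavailable: once $M'$ is regular and $P'\neq 0$, both $\Tau^{\pm 1}(M',P')$ have zero projective part, so there are no ``further $\Tau$-steps''; the containment must hold for $(M',P')$ itself. Second, the containment you state is false: for $n=3$ the pair $\bigl(S(1)\oplus M,\;P(0)\oplus P(3)\bigr)$ with $\dimvec M=(0,1,1,0)$ is a regular support tilting pair with support $\{1,2\}\not\subseteq\{2\}$. (This also shows $X_n$ must be read as $(0,P(0)\oplus P(n))$---compare the $n=2$ example in the paper---and the correct target is $0,n\notin\mathrm{supp}(M')$, i.e.\ $P(0)\oplus P(n)\in\add P'$.) Third, and decisively, proving that corrected claim is precisely where the paper's key computation lives: if $0,n\in\mathrm{supp}(M')$ then the component of $Q(M')$ through the arrow $0\to n$ is a type-$A$ quiver $b+1\to\cdots\to n\leftarrow 0\to\cdots\to a-1$, and one must show via Lemma~\ref{ev} that every tilting module over it contains $P(b+1)$ or $I(a-1)$ as a summand, forcing a projective or injective summand of $M'$ and contradicting type (R). This dichotomy is not routine ``support bookkeeping under $\tau^{\pm}$''; it is the argument the proposition actually rests on, and your proposal omits it.
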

\begin{proof}

(Existence) By Lemma\;\ref{l1} we can assume $P:=\oplus_{i\in I}P(i)\neq 0$. Let $a:=\mathrm{min}\;I$ and $b:=\mathrm{max}\;I$.
In the case $(a,b)=(0,n)$ it is obvious that $(M,P)\in \mathrm{s\text{-}tilt}_{X_{n}}(\Lambda)$. Thus we can assume $(a,b)\neq (0,n)$.
If $a\neq 0,\;b=n$ then there is a direct summand $N$ of $M$ such that $N\in \tilt(k(0\rightarrow \cdots \rightarrow a-1))$ and so $I(a-1)\in \mathrm{add}\;N\subset \mathrm{add}\;M$. This implies $\Tau^{-1}\;(M,P)\in \mathrm{s\text{-}tilt}_{X_{a-1}}(\Lambda)$. 
If $a=0,\;b\neq n$, then similarly we have $P(b+1)\in \mathrm{add}\;M$. This implies $(M,P)\in \mathrm{s\text{-}tilt}_{X_{b}}(\Lambda)$.

Now we assume that $a\neq 0$ and $b\neq n$. In this case there is a direct summand $N$ of $M$ such that $N\in \tilt (k(b+1\rightarrow\cdots \rightarrow n \leftarrow 0\rightarrow \cdots\rightarrow a-1))$.
 First we show that $P(b+1)\in \mathrm{add}\;N$ or $I(a-1)\in \mathrm{add}\;N$.
Let $\Lambda':=k(b+1\rightarrow\cdots \rightarrow n\leftarrow 0 \rightarrow \cdots a-1)$.  By Lemma\;\ref{ev}, we get $\Ext^{1}_{\Lambda'}(I(i),P(j))\neq 0$\;$(0\leq \forall i \leq a-1,\;b+1 \leq \forall j \leq n)$ and 
\begin{itemize}
\item $\mathrm{ind}\Ker\Ext_{\Lambda'}^{1}(-,P(b+1))=\mathrm{ind}\Lambda'\setminus \{I(i)\mid 0\leq  i \leq a-1\}.$
\item $\mathrm{ind}\Ker\Ext_{\Lambda'}^{1}(I(a-1),-)=\mathrm{ind}\Lambda'\setminus \{P(j)\mid b+1\leq  j \leq n\}.$
\end{itemize}
This implies that $N$ has either $P(b+1)$ or $I(a-1)$ as a direct summand. If $P(b+1)\in \mathrm{add}\;N$ then $(M,P)\in \mathrm{s\text{-}tilt}_{X_{b}}(\Lambda)$. If $I(a-1)\in \mathrm{add}\; N$ then
$\Tau^{-1}\;(M,P)\in \mathrm{s\text{-}tilt}_{X_{a-1}}(\Lambda)$.

(Uniqueness) Let $(M,P)\in \mathrm{s\text{-}tilt}_{X_{i}}(\Lambda)$. We note that if $i\neq j$ then 
 \[\mathrm{s\text{-}tilt}_{X_{i}}(\Lambda)\cap \mathrm{s\text{-}tilt}_{X_{j}}(\Lambda)=\emptyset.\] Therefore it is sufficient to show that 
\[\Tau^{r}(M,P)\notin \mathrm{s\text{-}tilt}_{X_{j}}(\Lambda)\ \ \forall (r,j)\in \mathbb{Z}_{<0}\times Q_{0}\cdots (\ast).\]
 We claim that $M$ has no 
 pre-injective direct summand. We first assume that $i\leq n-1$. Since $M$ has non-zero projective direct summand, $M$ has no non-zero pre-injective direct summand.\;(Remark\;\ref{prin})  
Second we consider the case $i=n$.  It is easy to check that $(\dimvec\;N)_{0}\geq 1$ for any
 pre-injective module $N$. Therefore $M$ has no pre-injective direct summand.
 
 Now we prove $(\ast)$. If $\Tau^{r}\;(M,P)\in \mathrm{s\text{-}tilt}_{X_{j}}(\Lambda)$ for some $(r,j)\in \mathbb{Z}_{<0}\times Q_{0}$. Then $\tau^{-r-1}I(j)\in \mathrm{add}\;M$
 and this is a contradiction.      
 
\end{proof}

\begin{example}
Let $n=2$. Then we have 
\[\begin{array}{lll}
\stilt_{X_{0}}(\Lambda)&=&\{(P(1)\oplus P(2),P(0)),(P(1)\oplus S(1),P(0))\} \\
\stilt_{X_{1}}(\Lambda)&=&\{(\tau S(1)\oplus P(2),P(1)),(P(2),P(0)\oplus P(1))\} \\
\stilt_{X_{2}}(\Lambda)&=&\{( S(1), P(0)\oplus P(2)),(0,P(0)\oplus P(1)\oplus P(2))\}
\end{array}.
\]
Since $\tau^{2}S(1)=S(1)$, following gives a complete list of basic support tilting modules (up to isomorphism) of $\Lambda$.
\[\begin{array}{ll}
& \{\tau^{-r}\Lambda,\;\tau^{-r}(\Lambda/P(2))\oplus \tau^{-r-1} P(2),\; \tau^{-r}P(0)\oplus \tau^{-r-1}(\Lambda /P(0)) \}\\ \vspace{3pt}
               \sqcup &\{S(1)\oplus \tau^{-r}(\Lambda / P(0)),\; S(1)\oplus \tau^{-r}P(0)\oplus \tau^{-r-1}P(1),\; S(1)\oplus \tau^{-r}(\Lambda/ P(1)) \}_{r\in \N_{\mathrm{odd}} }\\ \vspace{3pt}
               \sqcup &\{\tau S(1) \oplus \tau^{-r}(\Lambda / P(0)),\; \tau S(1)\oplus \tau^{-r}P(0)\oplus \tau^{-r-1}P(1),\; \tau S(1)\oplus \tau^{-r}(\Lambda/ P(1)) \}_{r\in \N_{\mathrm{even}}}\\ \vspace{3pt}
               \sqcup & \{\tau^{r}I_{\Lambda},\;\tau^{r}(I_{\Lambda}/I(0))\oplus \tau^{r+1} I(0),\; \tau^{r}I(2)\oplus \tau^{r+1}(I_{\Lambda} /I(2)) \}\\ \vspace{3pt}
               \sqcup &\{S(1)\oplus \tau^{r}I(0)\oplus \tau^{r-1}I(1),\; S(1)\oplus \tau^{r+1}I(1)\oplus \tau^{r}I(2),\; S(1)\oplus \tau^{r}(I_{\Lambda}/ I(1)) \}_{r\in \N_{\mathrm{odd}} }\\ \vspace{3pt}
               \sqcup &\{\tau S(1) \oplus \tau^{r+1}I(0)\oplus \tau^{r}I(1),\; \tau S(1)\oplus \tau^{r+1}I(1)\oplus \tau^{r}I(2),\; \tau S(1)\oplus \tau^{r}(I_{\Lambda}/ I(1)) \}_{r\in \N_{\mathrm{even}}}\\ \vspace{3pt}
               \sqcup &\{P(1)\oplus P(2),\; P(1)\oplus S(1),\; \tau S(1)\oplus P(2),\; \tau S(1)\oplus I(0),\; I(1)\oplus S(1),\;I(0)\oplus I(1)\}\\ \vspace{3pt}
               \sqcup &\{ P(2), S(1), I(0)\}\\ \vspace{3pt}
               \sqcup &\{0\},

\end{array}               
\] where $I_{\Lambda}=I(0)\oplus I(1) \oplus I(2)$ is a basic injective tilting module. 
Then the Hasse-quiver of $\stilt(\Lambda)$ is given by following.
\[
{\unitlength 0.1in%
\begin{picture}( 20.8000, 68.8000)( 25.6000,-74.4000)%
%
\special{pn 8}%
\special{ar 3600 600 40 40  0.0000000  6.2831853}%
%
\special{pn 8}%
\special{ar 3600 1000 40 40  0.0000000  6.2831853}%
%
\special{pn 8}%
\special{ar 3600 1400 40 40  0.0000000  6.2831853}%
%
\special{pn 8}%
\special{ar 3600 1800 40 40  0.0000000  6.2831853}%
%
\special{pn 8}%
\special{ar 3600 2200 40 40  0.0000000  6.2831853}%
%
\special{pn 8}%
\special{ar 3600 2600 40 40  0.0000000  6.2831853}%
%
\special{pn 8}%
\special{pa 3600 670}%
\special{pa 3600 930}%
\special{fp}%
\special{sh 1}%
\special{pa 3600 930}%
\special{pa 3620 863}%
\special{pa 3600 877}%
\special{pa 3580 863}%
\special{pa 3600 930}%
\special{fp}%
%
\special{pn 8}%
\special{pa 3600 1070}%
\special{pa 3600 1330}%
\special{fp}%
\special{sh 1}%
\special{pa 3600 1330}%
\special{pa 3620 1263}%
\special{pa 3600 1277}%
\special{pa 3580 1263}%
\special{pa 3600 1330}%
\special{fp}%
%
\special{pn 8}%
\special{pa 3600 1470}%
\special{pa 3600 1730}%
\special{fp}%
\special{sh 1}%
\special{pa 3600 1730}%
\special{pa 3620 1663}%
\special{pa 3600 1677}%
\special{pa 3580 1663}%
\special{pa 3600 1730}%
\special{fp}%
%
\special{pn 8}%
\special{pa 3600 1870}%
\special{pa 3600 2130}%
\special{fp}%
\special{sh 1}%
\special{pa 3600 2130}%
\special{pa 3620 2063}%
\special{pa 3600 2077}%
\special{pa 3580 2063}%
\special{pa 3600 2130}%
\special{fp}%
%
\special{pn 8}%
\special{pa 3600 2270}%
\special{pa 3600 2530}%
\special{fp}%
\special{sh 1}%
\special{pa 3600 2530}%
\special{pa 3620 2463}%
\special{pa 3600 2477}%
\special{pa 3580 2463}%
\special{pa 3600 2530}%
\special{fp}%
%
\special{pn 8}%
\special{ar 3600 5400 40 40  0.0000000  6.2831853}%
%
\special{pn 8}%
\special{ar 3600 5800 40 40  0.0000000  6.2831853}%
%
\special{pn 8}%
\special{ar 3600 6200 40 40  0.0000000  6.2831853}%
%
\special{pn 8}%
\special{ar 3600 6600 40 40  0.0000000  6.2831853}%
%
\special{pn 8}%
\special{ar 3600 7000 40 40  0.0000000  6.2831853}%
%
\special{pn 8}%
\special{ar 3600 7400 40 40  0.0000000  6.2831853}%
%
\special{pn 8}%
\special{pa 3600 5470}%
\special{pa 3600 5730}%
\special{fp}%
\special{sh 1}%
\special{pa 3600 5730}%
\special{pa 3620 5663}%
\special{pa 3600 5677}%
\special{pa 3580 5663}%
\special{pa 3600 5730}%
\special{fp}%
%
\special{pn 8}%
\special{pa 3600 5870}%
\special{pa 3600 6130}%
\special{fp}%
\special{sh 1}%
\special{pa 3600 6130}%
\special{pa 3620 6063}%
\special{pa 3600 6077}%
\special{pa 3580 6063}%
\special{pa 3600 6130}%
\special{fp}%
%
\special{pn 8}%
\special{pa 3600 6270}%
\special{pa 3600 6530}%
\special{fp}%
\special{sh 1}%
\special{pa 3600 6530}%
\special{pa 3620 6463}%
\special{pa 3600 6477}%
\special{pa 3580 6463}%
\special{pa 3600 6530}%
\special{fp}%
%
\special{pn 8}%
\special{pa 3600 6670}%
\special{pa 3600 6930}%
\special{fp}%
\special{sh 1}%
\special{pa 3600 6930}%
\special{pa 3620 6863}%
\special{pa 3600 6877}%
\special{pa 3580 6863}%
\special{pa 3600 6930}%
\special{fp}%
%
\special{pn 8}%
\special{pa 3600 7070}%
\special{pa 3600 7330}%
\special{fp}%
\special{sh 1}%
\special{pa 3600 7330}%
\special{pa 3620 7263}%
\special{pa 3600 7277}%
\special{pa 3580 7263}%
\special{pa 3600 7330}%
\special{fp}%
%
\special{pn 8}%
\special{ar 4000 1200 40 40  0.0000000  6.2831853}%
%
\special{pn 8}%
\special{ar 4000 2000 40 40  0.0000000  6.2831853}%
%
\special{pn 8}%
\special{ar 4000 2800 40 40  0.0000000  6.2831853}%
%
\special{pn 8}%
\special{ar 3200 800 40 40  0.0000000  6.2831853}%
%
\special{pn 8}%
\special{ar 3200 1600 40 40  0.0000000  6.2831853}%
%
\special{pn 8}%
\special{ar 3200 2400 40 40  0.0000000  6.2831853}%
%
\special{pn 8}%
\special{pa 3550 630}%
\special{pa 3250 770}%
\special{fp}%
\special{sh 1}%
\special{pa 3250 770}%
\special{pa 3319 760}%
\special{pa 3298 747}%
\special{pa 3302 724}%
\special{pa 3250 770}%
\special{fp}%
%
\special{pn 8}%
\special{pa 3550 1430}%
\special{pa 3250 1570}%
\special{fp}%
\special{sh 1}%
\special{pa 3250 1570}%
\special{pa 3319 1560}%
\special{pa 3298 1547}%
\special{pa 3302 1524}%
\special{pa 3250 1570}%
\special{fp}%
%
\special{pn 8}%
\special{pa 3550 2230}%
\special{pa 3250 2370}%
\special{fp}%
\special{sh 1}%
\special{pa 3250 2370}%
\special{pa 3319 2360}%
\special{pa 3298 2347}%
\special{pa 3302 2324}%
\special{pa 3250 2370}%
\special{fp}%
%
\special{pn 8}%
\special{pa 3200 860}%
\special{pa 3200 1550}%
\special{fp}%
\special{sh 1}%
\special{pa 3200 1550}%
\special{pa 3220 1483}%
\special{pa 3200 1497}%
\special{pa 3180 1483}%
\special{pa 3200 1550}%
\special{fp}%
%
\special{pn 8}%
\special{pa 3200 1660}%
\special{pa 3200 2350}%
\special{fp}%
\special{sh 1}%
\special{pa 3200 2350}%
\special{pa 3220 2283}%
\special{pa 3200 2297}%
\special{pa 3180 2283}%
\special{pa 3200 2350}%
\special{fp}%
%
\special{pn 8}%
\special{pa 4000 1260}%
\special{pa 4000 1950}%
\special{fp}%
\special{sh 1}%
\special{pa 4000 1950}%
\special{pa 4020 1883}%
\special{pa 4000 1897}%
\special{pa 3980 1883}%
\special{pa 4000 1950}%
\special{fp}%
%
\special{pn 8}%
\special{pa 4000 2060}%
\special{pa 4000 2750}%
\special{fp}%
\special{sh 1}%
\special{pa 4000 2750}%
\special{pa 4020 2683}%
\special{pa 4000 2697}%
\special{pa 3980 2683}%
\special{pa 4000 2750}%
\special{fp}%
%
\special{pn 8}%
\special{pa 3660 1050}%
\special{pa 3960 1190}%
\special{fp}%
\special{sh 1}%
\special{pa 3960 1190}%
\special{pa 3908 1144}%
\special{pa 3912 1167}%
\special{pa 3891 1180}%
\special{pa 3960 1190}%
\special{fp}%
%
\special{pn 8}%
\special{pa 3660 1850}%
\special{pa 3960 1990}%
\special{fp}%
\special{sh 1}%
\special{pa 3960 1990}%
\special{pa 3908 1944}%
\special{pa 3912 1967}%
\special{pa 3891 1980}%
\special{pa 3960 1990}%
\special{fp}%
%
\special{pn 8}%
\special{pa 3660 2650}%
\special{pa 3960 2790}%
\special{fp}%
\special{sh 1}%
\special{pa 3960 2790}%
\special{pa 3908 2744}%
\special{pa 3912 2767}%
\special{pa 3891 2780}%
\special{pa 3960 2790}%
\special{fp}%
%
\special{pn 8}%
\special{ar 4000 5600 40 40  0.0000000  6.2831853}%
%
\special{pn 8}%
\special{ar 4000 6400 40 40  0.0000000  6.2831853}%
%
\special{pn 8}%
\special{ar 4000 7200 40 40  0.0000000  6.2831853}%
%
\special{pn 8}%
\special{pa 4000 5660}%
\special{pa 4000 6350}%
\special{fp}%
\special{sh 1}%
\special{pa 4000 6350}%
\special{pa 4020 6283}%
\special{pa 4000 6297}%
\special{pa 3980 6283}%
\special{pa 4000 6350}%
\special{fp}%
%
\special{pn 8}%
\special{pa 4000 6460}%
\special{pa 4000 7150}%
\special{fp}%
\special{sh 1}%
\special{pa 4000 7150}%
\special{pa 4020 7083}%
\special{pa 4000 7097}%
\special{pa 3980 7083}%
\special{pa 4000 7150}%
\special{fp}%
%
\special{pn 8}%
\special{ar 3200 5200 40 40  0.0000000  6.2831853}%
%
\special{pn 8}%
\special{ar 3200 6000 40 40  0.0000000  6.2831853}%
%
\special{pn 8}%
\special{ar 3200 6800 40 40  0.0000000  6.2831853}%
%
\special{pn 8}%
\special{pa 3200 5260}%
\special{pa 3200 5950}%
\special{fp}%
\special{sh 1}%
\special{pa 3200 5950}%
\special{pa 3220 5883}%
\special{pa 3200 5897}%
\special{pa 3180 5883}%
\special{pa 3200 5950}%
\special{fp}%
%
\special{pn 8}%
\special{pa 3200 6060}%
\special{pa 3200 6750}%
\special{fp}%
\special{sh 1}%
\special{pa 3200 6750}%
\special{pa 3220 6683}%
\special{pa 3200 6697}%
\special{pa 3180 6683}%
\special{pa 3200 6750}%
\special{fp}%
%
\special{pn 8}%
\special{pa 3260 5250}%
\special{pa 3560 5390}%
\special{fp}%
\special{sh 1}%
\special{pa 3560 5390}%
\special{pa 3508 5344}%
\special{pa 3512 5367}%
\special{pa 3491 5380}%
\special{pa 3560 5390}%
\special{fp}%
%
\special{pn 8}%
\special{pa 3260 6050}%
\special{pa 3560 6190}%
\special{fp}%
\special{sh 1}%
\special{pa 3560 6190}%
\special{pa 3508 6144}%
\special{pa 3512 6167}%
\special{pa 3491 6180}%
\special{pa 3560 6190}%
\special{fp}%
%
\special{pn 8}%
\special{pa 3260 6850}%
\special{pa 3560 6990}%
\special{fp}%
\special{sh 1}%
\special{pa 3560 6990}%
\special{pa 3508 6944}%
\special{pa 3512 6967}%
\special{pa 3491 6980}%
\special{pa 3560 6990}%
\special{fp}%
%
\special{pn 8}%
\special{pa 3950 5630}%
\special{pa 3650 5770}%
\special{fp}%
\special{sh 1}%
\special{pa 3650 5770}%
\special{pa 3719 5760}%
\special{pa 3698 5747}%
\special{pa 3702 5724}%
\special{pa 3650 5770}%
\special{fp}%
%
\special{pn 8}%
\special{pa 3950 6430}%
\special{pa 3650 6570}%
\special{fp}%
\special{sh 1}%
\special{pa 3650 6570}%
\special{pa 3719 6560}%
\special{pa 3698 6547}%
\special{pa 3702 6524}%
\special{pa 3650 6570}%
\special{fp}%
%
\special{pn 8}%
\special{pa 3950 7230}%
\special{pa 3650 7370}%
\special{fp}%
\special{sh 1}%
\special{pa 3650 7370}%
\special{pa 3719 7360}%
\special{pa 3698 7347}%
\special{pa 3702 7324}%
\special{pa 3650 7370}%
\special{fp}%
%
\special{pn 8}%
\special{ar 2600 3800 40 40  0.0000000  6.2831853}%
%
\special{pn 8}%
\special{ar 4600 4200 40 40  0.0000000  6.2831853}%
%
\special{pn 8}%
\special{pa 3160 830}%
\special{pa 2620 3770}%
\special{fp}%
\special{sh 1}%
\special{pa 2620 3770}%
\special{pa 2652 3708}%
\special{pa 2630 3718}%
\special{pa 2612 3701}%
\special{pa 2620 3770}%
\special{fp}%
%
\special{pn 8}%
\special{pa 2610 3840}%
\special{pa 3150 6780}%
\special{fp}%
\special{sh 1}%
\special{pa 3150 6780}%
\special{pa 3158 6711}%
\special{pa 3140 6728}%
\special{pa 3118 6718}%
\special{pa 3150 6780}%
\special{fp}%
%
\special{pn 8}%
\special{pa 4040 1220}%
\special{pa 4580 4160}%
\special{fp}%
\special{sh 1}%
\special{pa 4580 4160}%
\special{pa 4588 4091}%
\special{pa 4570 4108}%
\special{pa 4548 4098}%
\special{pa 4580 4160}%
\special{fp}%
%
\special{pn 8}%
\special{pa 4590 4250}%
\special{pa 4050 7190}%
\special{fp}%
\special{sh 1}%
\special{pa 4050 7190}%
\special{pa 4082 7128}%
\special{pa 4060 7138}%
\special{pa 4042 7121}%
\special{pa 4050 7190}%
\special{fp}%
%
\special{pn 8}%
\special{ar 4600 800 40 40  0.0000000  6.2831853}%
%
\special{pn 8}%
\special{pa 3650 620}%
\special{pa 4530 790}%
\special{fp}%
\special{sh 1}%
\special{pa 4530 790}%
\special{pa 4468 758}%
\special{pa 4478 780}%
\special{pa 4461 797}%
\special{pa 4530 790}%
\special{fp}%
%
\special{pn 8}%
\special{pa 4600 860}%
\special{pa 4600 4130}%
\special{fp}%
\special{sh 1}%
\special{pa 4600 4130}%
\special{pa 4620 4063}%
\special{pa 4600 4077}%
\special{pa 4580 4063}%
\special{pa 4600 4130}%
\special{fp}%
%
\special{pn 8}%
\special{pa 2600 3860}%
\special{pa 2600 7130}%
\special{fp}%
\special{sh 1}%
\special{pa 2600 7130}%
\special{pa 2620 7063}%
\special{pa 2600 7077}%
\special{pa 2580 7063}%
\special{pa 2600 7130}%
\special{fp}%
%
\special{pn 8}%
\special{pa 2650 7220}%
\special{pa 3530 7390}%
\special{fp}%
\special{sh 1}%
\special{pa 3530 7390}%
\special{pa 3468 7358}%
\special{pa 3478 7380}%
\special{pa 3461 7397}%
\special{pa 3530 7390}%
\special{fp}%
%
\special{pn 8}%
\special{ar 2600 7200 40 40  0.0000000  6.2831853}%
%
\special{pn 8}%
\special{pa 4550 820}%
\special{pa 4535 849}%
\special{pa 4520 877}%
\special{pa 4505 906}%
\special{pa 4490 934}%
\special{pa 4474 963}%
\special{pa 4459 991}%
\special{pa 4429 1049}%
\special{pa 4414 1077}%
\special{pa 4399 1106}%
\special{pa 4384 1134}%
\special{pa 4369 1163}%
\special{pa 4354 1191}%
\special{pa 4324 1249}%
\special{pa 4309 1277}%
\special{pa 4294 1306}%
\special{pa 4279 1334}%
\special{pa 4249 1392}%
\special{pa 4234 1420}%
\special{pa 4204 1478}%
\special{pa 4190 1506}%
\special{pa 4160 1564}%
\special{pa 4145 1592}%
\special{pa 4131 1621}%
\special{pa 4116 1650}%
\special{pa 4101 1678}%
\special{pa 4087 1707}%
\special{pa 4072 1736}%
\special{pa 4058 1764}%
\special{pa 4043 1793}%
\special{pa 4029 1822}%
\special{pa 4014 1851}%
\special{pa 4000 1880}%
\special{pa 3985 1908}%
\special{pa 3943 1995}%
\special{pa 3928 2024}%
\special{pa 3914 2053}%
\special{pa 3900 2081}%
\special{pa 3844 2197}%
\special{pa 3831 2226}%
\special{pa 3789 2313}%
\special{pa 3776 2342}%
\special{pa 3762 2371}%
\special{pa 3749 2400}%
\special{pa 3735 2429}%
\special{pa 3709 2487}%
\special{pa 3695 2517}%
\special{pa 3643 2633}%
\special{pa 3630 2663}%
\special{pa 3617 2692}%
\special{pa 3605 2721}%
\special{pa 3592 2750}%
\special{pa 3579 2780}%
\special{pa 3567 2809}%
\special{pa 3554 2838}%
\special{pa 3542 2868}%
\special{pa 3530 2897}%
\special{pa 3517 2927}%
\special{pa 3505 2956}%
\special{pa 3493 2986}%
\special{pa 3481 3015}%
\special{pa 3469 3045}%
\special{pa 3457 3074}%
\special{pa 3446 3104}%
\special{pa 3434 3134}%
\special{pa 3423 3163}%
\special{pa 3411 3193}%
\special{pa 3400 3223}%
\special{pa 3388 3252}%
\special{pa 3333 3402}%
\special{pa 3323 3431}%
\special{pa 3312 3461}%
\special{pa 3302 3491}%
\special{pa 3291 3521}%
\special{pa 3271 3581}%
\special{pa 3261 3612}%
\special{pa 3221 3732}%
\special{pa 3212 3763}%
\special{pa 3202 3793}%
\special{pa 3193 3823}%
\special{pa 3184 3854}%
\special{pa 3166 3914}%
\special{pa 3157 3945}%
\special{pa 3148 3975}%
\special{pa 3139 4006}%
\special{pa 3131 4036}%
\special{pa 3122 4067}%
\special{pa 3114 4098}%
\special{pa 3105 4128}%
\special{pa 3081 4221}%
\special{pa 3073 4251}%
\special{pa 3066 4282}%
\special{pa 3050 4344}%
\special{pa 3043 4375}%
\special{pa 3035 4406}%
\special{pa 2986 4623}%
\special{pa 2980 4654}%
\special{pa 2973 4685}%
\special{pa 2966 4717}%
\special{pa 2954 4779}%
\special{pa 2947 4810}%
\special{pa 2941 4842}%
\special{pa 2929 4904}%
\special{pa 2923 4936}%
\special{pa 2911 4998}%
\special{pa 2905 5030}%
\special{pa 2900 5061}%
\special{pa 2894 5093}%
\special{pa 2888 5124}%
\special{pa 2883 5156}%
\special{pa 2878 5187}%
\special{pa 2872 5219}%
\special{pa 2867 5250}%
\special{pa 2862 5282}%
\special{pa 2856 5314}%
\special{pa 2851 5345}%
\special{pa 2841 5409}%
\special{pa 2836 5440}%
\special{pa 2831 5472}%
\special{pa 2827 5504}%
\special{pa 2822 5535}%
\special{pa 2812 5599}%
\special{pa 2808 5631}%
\special{pa 2803 5662}%
\special{pa 2799 5694}%
\special{pa 2794 5726}%
\special{pa 2790 5758}%
\special{pa 2785 5790}%
\special{pa 2777 5854}%
\special{pa 2773 5885}%
\special{pa 2768 5917}%
\special{pa 2724 6269}%
\special{pa 2721 6301}%
\special{pa 2709 6397}%
\special{pa 2706 6429}%
\special{pa 2698 6493}%
\special{pa 2695 6526}%
\special{pa 2691 6558}%
\special{pa 2688 6590}%
\special{pa 2680 6654}%
\special{pa 2677 6686}%
\special{pa 2673 6718}%
\special{pa 2670 6750}%
\special{pa 2666 6782}%
\special{pa 2663 6815}%
\special{pa 2659 6847}%
\special{pa 2656 6879}%
\special{pa 2652 6911}%
\special{pa 2646 6975}%
\special{pa 2642 7007}%
\special{pa 2639 7040}%
\special{pa 2635 7072}%
\special{pa 2632 7104}%
\special{pa 2630 7120}%
\special{fp}%
%
\special{pn 8}%
\special{pa 2632 7104}%
\special{pa 2630 7120}%
\special{fp}%
\special{sh 1}%
\special{pa 2630 7120}%
\special{pa 2658 7056}%
\special{pa 2637 7067}%
\special{pa 2618 7051}%
\special{pa 2630 7120}%
\special{fp}%
%
\special{pn 4}%
\special{sh 1}%
\special{ar 3200 2580 8 8 0  6.28318530717959E+0000}%
\special{sh 1}%
\special{ar 3200 2780 8 8 0  6.28318530717959E+0000}%
\special{sh 1}%
\special{ar 3200 2980 8 8 0  6.28318530717959E+0000}%
\special{sh 1}%
\special{ar 3200 3180 8 8 0  6.28318530717959E+0000}%
\special{sh 1}%
\special{ar 3200 3380 8 8 0  6.28318530717959E+0000}%
\special{sh 1}%
\special{ar 3200 3980 8 8 0  6.28318530717959E+0000}%
\special{sh 1}%
\special{ar 3200 4180 8 8 0  6.28318530717959E+0000}%
\special{sh 1}%
\special{ar 3200 4380 8 8 0  6.28318530717959E+0000}%
\special{sh 1}%
\special{ar 3200 4580 8 8 0  6.28318530717959E+0000}%
\special{sh 1}%
\special{ar 3200 4780 8 8 0  6.28318530717959E+0000}%
\special{sh 1}%
\special{ar 3200 4980 8 8 0  6.28318530717959E+0000}%
%
\special{pn 4}%
\special{sh 1}%
\special{ar 4000 2980 8 8 0  6.28318530717959E+0000}%
\special{sh 1}%
\special{ar 4000 3180 8 8 0  6.28318530717959E+0000}%
\special{sh 1}%
\special{ar 4000 3380 8 8 0  6.28318530717959E+0000}%
\special{sh 1}%
\special{ar 4000 3580 8 8 0  6.28318530717959E+0000}%
\special{sh 1}%
\special{ar 4000 3780 8 8 0  6.28318530717959E+0000}%
\special{sh 1}%
\special{ar 4000 4380 8 8 0  6.28318530717959E+0000}%
\special{sh 1}%
\special{ar 4000 4580 8 8 0  6.28318530717959E+0000}%
\special{sh 1}%
\special{ar 4000 4780 8 8 0  6.28318530717959E+0000}%
\special{sh 1}%
\special{ar 4000 4980 8 8 0  6.28318530717959E+0000}%
\special{sh 1}%
\special{ar 4000 5180 8 8 0  6.28318530717959E+0000}%
\special{sh 1}%
\special{ar 4000 5380 8 8 0  6.28318530717959E+0000}%
%
\special{pn 4}%
\special{sh 1}%
\special{ar 3600 2780 8 8 0  6.28318530717959E+0000}%
\special{sh 1}%
\special{ar 3600 2980 8 8 0  6.28318530717959E+0000}%
\special{sh 1}%
\special{ar 3600 3180 8 8 0  6.28318530717959E+0000}%
\special{sh 1}%
\special{ar 3600 3380 8 8 0  6.28318530717959E+0000}%
\special{sh 1}%
\special{ar 3600 3580 8 8 0  6.28318530717959E+0000}%
\special{sh 1}%
\special{ar 3600 4180 8 8 0  6.28318530717959E+0000}%
\special{sh 1}%
\special{ar 3600 4380 8 8 0  6.28318530717959E+0000}%
\special{sh 1}%
\special{ar 3600 4580 8 8 0  6.28318530717959E+0000}%
\special{sh 1}%
\special{ar 3600 4780 8 8 0  6.28318530717959E+0000}%
\special{sh 1}%
\special{ar 3600 4980 8 8 0  6.28318530717959E+0000}%
\special{sh 1}%
\special{ar 3600 5180 8 8 0  6.28318530717959E+0000}%
\end{picture}}%
\]
\end{example}
\section{Slice tilting modules for path algebras}
Let $Q$ be a connected acyclic quiver.
 For any arrow $\alpha:x\rightarrow y$ in $Q$, denote by $\alpha^{-1}:y\rightarrow x$ its formal inverse.
  For any walk \[w:x_{0}\stackrel{\alpha_{1}^{\epsilon_{1}}}{\longrightarrow}x_{1}\stackrel{\alpha_{2}^{\epsilon_{2}}}{\longrightarrow}\cdots \stackrel{\alpha_{r}^{\epsilon_{r}}}{\longrightarrow}x_{r}
\ (\alpha_{i}\in Q_{1},\;\epsilon_{i}\in\{\pm1\},\;\forall i\in\{1,\cdots,r\})\] 
  on $Q$, we put $c^{+}(w):=\#\{i\mid \epsilon_{i}=+1 \}$. Then we set \[l_{Q}(x,y):=\left\{\begin{array}{cl}
  \mathrm{min}\{c^{+}(w)\mid w:\mathrm{walk\ from}\ x\ \mathrm{to}\ y\ \mathrm{on}\ Q\}& x\neq y\\
  0& x=y\\
\end{array}\right. .\]
\begin{definition}
Let $Q$ be a connected acyclic quiver. Then the translation quiver $\Z Q $ is defined as follows.
\begin{itemize}
\item $(\Z Q)_{0}:=\Z\times Q_{0}.$
\item $(\Z Q)_{1}:=\{(r,\alpha):(r,x)\to (r,y)\mid Q_{1}\ni \alpha:x\to y \}\sqcup\{(r,\alpha^{-}):(r,x)\to (r+1,y)\mid Q_{1}\ni \alpha:y\to x\}$.
\end{itemize}
\end{definition}
It is well-known that the pre-projective component of the Auslander-Reiten quiver of $\mod \Lambda$ is viewed as a full subquiver of the translation quiver $\Z_{\geq 0} Q^{\mathrm{op}}$ via the assignment $\tau^{-r}P(x)\mapsto (r,x)$. 
More precisely if $Q$ is not a Dynkin quiver, then above assignment is induces a quiver isomorphism. If $Q$ is a Dynkin quiver, then above assignment is extended as a quiver isomorphism
 $\Gamma(\mathrm{D}^{\mathrm{b}}(\mod \Lambda)) \simeq \Z Q^{\mathrm{op}}$, where $\mathrm{D}^{\mathrm{b}}(\mod \Lambda)$ is the bounded derived category of $\mod \Lambda$ and $\Gamma(\mathrm{D}^{\mathrm{b}}(\mod \Lambda))$ is its Auslander-Reiten quiver.
\begin{lemma}
\label{chain}
Let $Q$ be a connected acyclic quiver. Then  there is a path from $(r,x)$ to $(s,y)$ on the translation quiver $\Z Q^{\mathrm{op}}$ if and only if
$s\geq r+l_{Q}(x,y)$. 
\end{lemma}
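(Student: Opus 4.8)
The plan is to set up a level-compatible dictionary between paths in $\Z Q^{\mathrm{op}}$ and walks in $Q$. First I would unwind the definition: each arrow $\alpha\colon p\to q$ of $Q$ contributes to $\Z Q^{\mathrm{op}}$ exactly the two families of arrows $(r,q)\to(r,p)$ and $(r,p)\to(r+1,q)$, for $r\in\Z$. Hence every arrow of $\Z Q^{\mathrm{op}}$ either fixes the first coordinate, in which case its projection to $Q$ traverses an arrow backwards, or raises the first coordinate by $1$, in which case its projection traverses an arrow forwards. Therefore a path $(r,x)\to\cdots\to(s,y)$ in $\Z Q^{\mathrm{op}}$ projects to a walk $w$ from $x$ to $y$ in $Q$, and the number $c^{+}(w)$ of its forward steps equals the number of coordinate-raising arrows used, which is precisely $s-r$; conversely, any walk $w$ from $x$ to $y$ in $Q$ lifts, step by step, to a path from $(r,x)$ to $(r+c^{+}(w),y)$ in $\Z Q^{\mathrm{op}}$.

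With this dictionary in hand, the ``only if'' direction is immediate: a path from $(r,x)$ to $(s,y)$ yields a walk $w$ from $x$ to $y$ with $s-r=c^{+}(w)\geq l_{Q}(x,y)$, hence $s\geq r+l_{Q}(x,y)$; the degenerate case $x=y$ is covered by $l_{Q}(x,x)=0$ together with the fact that first coordinates are non-decreasing along any path.

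For the converse, suppose $s\geq r+l_{Q}(x,y)$. I would first choose a walk $w$ from $x$ to $y$ with $c^{+}(w)=l_{Q}(x,y)$ and lift it to a path from $(r,x)$ to $(r+l_{Q}(x,y),y)$. It then remains to climb from level $r+l_{Q}(x,y)$ up to level $s$, for which I would exhibit, for each $t$, a length-two path $(t,y)\to(t+1,y)$: picking any arrow $\alpha$ of $Q$ incident with $y$ (such an $\alpha$ exists by connectedness of $Q$), the walk $y\xrightarrow{\alpha}z\xrightarrow{\alpha^{-1}}y$ if $\alpha\colon y\to z$, or $y\xrightarrow{\alpha^{-1}}z\xrightarrow{\alpha}y$ if $\alpha\colon z\to y$, has $c^{+}=1$ and lifts to such a path. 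Concatenating $s-r-l_{Q}(x,y)\geq 0$ copies of it onto the path built from $w$ produces the desired path from $(r,x)$ to $(s,y)$.

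The actual computations are routine once the dictionary is in place; the one point requiring genuine care is the orientation bookkeeping when passing to $Q^{\mathrm{op}}$, namely checking that ``raising the $\Z$-coordinate'' corresponds to traversing an arrow of $Q$ forwards and not backwards. The other small ingredient, the level-raising loop at $y$ in the converse direction, is where connectedness of $Q$ is used.
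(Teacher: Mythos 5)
Your proof is correct and takes essentially the same route as the paper: both set up the dictionary under which a path in $\Z Q^{\mathrm{op}}$ projects to a walk $w$ in $Q$ with $c^{+}(w)=s-r$, get the forward implication from the minimality defining $l_{Q}(x,y)$, and prove the converse by lifting a minimal walk and then climbing from level $r+l_{Q}(x,y)$ to level $s$. The only difference is cosmetic: you spell out the length-two level-raising loop at $y$, which the paper leaves implicit in the assertion that there is a path from $(r+l_{Q}(x,y),y)$ to $(s,y)$.
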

\begin{proof}
Assume that there is a path
\[W:(r,x)=(r_{0},x_{0})\to\cdots\to (r_{l},x_{l})=(s,y).\]
By definition of  $(\Z Q^{\mathrm{op}})_{1}$, there is a walk 
\[w:x=x_{0}-x_{1}-\cdots-x_{l}=y\]
on $Q$.
Note that there is an arrow $(t,a)\rightarrow (u,b)$ in  $(\Z Q^{\mathrm{op}})_{1}$ if and only if
one of the following holds:
\begin{itemize}
\item[(a)]$a\leftarrow b$ in $Q$ and $t=u$.
\item[(b)]$a\rightarrow b$ in $Q$ and $t+1=u$.
\end{itemize}
Therefore we have that
\[s=r+c^{+}(w)\geq r+l_{Q}(x,y).\]
Conversely, we assume that $s\geq r+l_{Q}(x,y)$.
Let $w:x=x_{0}-\cdots -x_{l}=y$ be a walk on $Q$ such that $c^{+}(w)=l_{Q}(x,y)$.
Then we have a path
\[(r,x)=(r_{0},x_{0})\to\cdots\to (r_{l},x_{l})=(r_{l},y).\]
Then it is easy to check that $r_{l}-r_{0}=c^{+}(w)=l_{Q}(x,y)$.
We conclude that there exists a path from $(r,x)$ to $(r+l_{Q}(x,y),y)$.
Since $s\geq r+l_{Q}(x,y)$, we have a path from $(r+l_{Q}(x,y),y)$ to $(s,y)$. In particular,
there is a path from $(r,x)$ to $(s,y)$.
\end{proof}
\begin{definition}
Let $A$ be a finite dimensional algebra over $k$ and let $\mathcal{S}$ be a full subquiver of
the Auslander-Reiten quiver $\Gamma(\mod A)$ of $\mod A$. Then $\mathcal{S}$ said to be a slice
if $\mathcal{S}$ satisfies the following.
\begin{itemize}
\item[(S1)] $\add \mathcal{S}_{0} $ contains a sincere module $M\in \mod A$.
\item[(S2)] Let $X,Y\in \mathcal{S}_{0}$ and let  $W$ be a path from $X$ to $Y$ on $\Gamma(\mod A)$. Then for any $Z$ in $W$
is in $\mathcal{S}_{0}$.
\item[(S3)] Let $X$ be an indecomposable module. Then at most one of $X$, $\tau^{-}X$ is in $\mathcal{S}_{0}$.
\item[(S4)] Let $X\to Y$ be an arrow in $\Gamma(\mod A)$. If $Y\in \mathcal{S}_{0}$, then either $X$ or $\tau^{-}X$ is in $\mathcal{S}_{0}$. If $X\in \mathcal{S}_{0}$, then either $Y$ or $\tau Y$ is in $\mathcal{S}_{0}$.
\end{itemize} 
\end{definition}
Let $\mathcal{S}$ be a slice of a finite dimensional algebra $A$. Then $\mathcal{S}_{0}$ is finite and $T_{\mathcal{S}}:=\bigoplus_{X\in \mathcal{S}_{0}}X$
is a tilting module such that $\End_{A}(T_{\mathcal{S}})$ is hereditary (see \cite[Section\;4.2]{R}). We call $T_{\mathcal{S}}$ a slice tilting module. 
\begin{lemma}
\label{chain-condition}We have the following.
\begin{itemize}
\item[(1)] There is a chain of irreducible maps from $\tau^{-r}P(x)$ to
$\tau^{-s}P(y)$ if and only if 
\[s\geq r+l_{Q}(x,y).\]
\item[(2)] Let $T=\bigoplus_{x=1}^{n}\tau^{-r_{x}}P(x)$ be a slice tilting module. Then there is a chain of irreducible maps
from $\tau^{-r_{x}}P(x)$ to $\tau^{-r_{y}}P(y)$ if and only if $r_{y}=r_{x}+l_{Q}(x,y)$.
\item[(3)] $T=\bigoplus_{x=1}^{n}\tau^{-r_{x}}P(x)$ is a slice tilting module if and only if $r_{y}\leq r_{x}+l_{Q}(x,y)$ for any $x,y$. 
\end{itemize}
\end{lemma}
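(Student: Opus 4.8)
The plan is to translate each of the three statements into the combinatorics of $\Z Q^{\mathrm{op}}$ through the identification $\tau^{-r}P(x)\leftrightarrow(r,x)$ recalled above, so that Lemma~\ref{chain} does essentially all of the work; throughout, a chain of irreducible maps from $A$ to $B$ is the same as a path from $A$ to $B$ in the relevant Auslander--Reiten quiver. For part (1) I would first record that, whether or not $Q$ is Dynkin, the part of $\Gamma(\mod\Lambda)$ we care about sits in $\Z Q^{\mathrm{op}}$ as a \emph{convex} full subquiver: for $Q$ non-Dynkin it is $\Z_{\geq 0}Q^{\mathrm{op}}$, convex because any path in $\Z Q^{\mathrm{op}}$ is non-decreasing in the first coordinate; for $Q$ Dynkin it is the subquiver of $\Gamma(\Db(\mod\Lambda))\simeq\Z Q^{\mathrm{op}}$ on the module classes, which lies between the sections of projectives and of injectives. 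Convexity makes a chain of irreducible maps between $\tau^{-r}P(x)$ and $\tau^{-s}P(y)$ literally a path from $(r,x)$ to $(s,y)$ in $\Z Q^{\mathrm{op}}$, and Lemma~\ref{chain} turns this into $s\geq r+l_Q(x,y)$, which is (1).

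For part (3), put $\mathcal S_0=\{\tau^{-r_x}P(x)\mid x\in Q_0\}$ and let $\mathcal S$ be the full subquiver on it; since $\mathcal S_0$ meets every $\tau$-orbit in exactly one module, (S3) is free and $|\mathcal S_0|=|\Lambda|$. For the ``only if'' direction I would argue that for each $x,y$ one of two things happens: if there is no chain $\tau^{-r_x}P(x)\to\cdots\to\tau^{-r_y}P(y)$ then $r_y<r_x+l_Q(x,y)$ by (1); and if there is one but $r_y>r_x+l_Q(x,y)$, then by (1) the module $\tau^{-s}P(y)$ with $s=r_x+l_Q(x,y)$ lies on a path from $\tau^{-r_x}P(x)$ to $\tau^{-r_y}P(y)$, so (S2) forces $\tau^{-s}P(y)\in\mathcal S_0$, hence $s=r_y$, a contradiction; either way $r_y\leq r_x+l_Q(x,y)$. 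For ``if'' I would verify the axioms one by one assuming the inequality for all $x,y$: (S2) because if $\tau^{-t}P(z)$ lies on a path between two elements of $\mathcal S_0$ then (1) and the hypothesis squeeze $r_z\leq r_x+l_Q(x,z)\leq t\leq r_y-l_Q(z,y)\leq r_z$, forcing $t=r_z$; (S4) because the hypothesis applied to an arrow $a\to b$ of $Q$ and to its reverse gives $r_a\leq r_b\leq r_a+1$ (using $l_Q(b,a)=0$, $l_Q(a,b)\leq1$), from which a short case check on the two types of arrows of $\Z Q^{\mathrm{op}}$ yields (S4); and (S1) by showing $T:=\bigoplus_x\tau^{-r_x}P(x)$ is rigid --- a nonzero $\Ext^1_\Lambda(\tau^{-r_x}P(x),\tau^{-r_y}P(y))$ would, via the Auslander--Reiten formula, give a nonzero $\Hom_\Lambda(\tau^{-r_y}P(y),\tau^{-(r_x-1)}P(x))$, hence (morphisms in these components being sums of composites of irreducible maps) a chain, hence $r_x-1\geq r_y+l_Q(y,x)$ by (1), contradicting the hypothesis for $(y,x)$; being rigid with $|\Lambda|$ non-isomorphic indecomposable summands over the hereditary algebra $kQ$, $T$ is then a tilting module, hence faithful, hence sincere, which gives (S1).

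Part (2) then drops out of (1) and (3): if a chain $\tau^{-r_x}P(x)\to\cdots\to\tau^{-r_y}P(y)$ exists then $r_y\geq r_x+l_Q(x,y)$ by (1) while $r_y\leq r_x+l_Q(x,y)$ by (3), so $r_y=r_x+l_Q(x,y)$; conversely if $r_y=r_x+l_Q(x,y)$ then (1) supplies the chain.

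The main obstacle will be the sincerity condition (S1): the axioms (S2), (S3), (S4) are literally inequalities among the integers $r_x$ and fall straight out of Lemma~\ref{chain}, but (S1) needs the extra, non-combinatorial input that a rigid $kQ$-module with the maximal number of indecomposable summands is a tilting module (and hence faithful, hence sincere). For $Q$ of type $\tilde A_{n,1}$ one can sidestep this entirely using Remark~\ref{prin}, since $\tau^{-r}P(x)$ is already sincere once $r\geq1$ and $\Lambda$ is itself sincere. A secondary technical point is the Dynkin case of part (1), where one must pass to $\Db(\mod\Lambda)$ and check convexity of the region of module classes in $\Z Q^{\mathrm{op}}$.
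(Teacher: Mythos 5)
Your proposal is correct and follows essentially the same route as the paper: part (1) via the identification of the preprojective objects with a convex piece of $\Z Q^{\mathrm{op}}$ (passing to $\mathrm{D}^{\mathrm{b}}(\mod\Lambda)$ in the Dynkin case) together with Lemma~\ref{chain}, the slice axioms checked from the inequalities $r_{y}\leq r_{x}+l_{Q}(x,y)$, and (S1) obtained by proving $T$ rigid via the Auslander--Reiten formula and invoking that a rigid module with $|\Lambda|$ indecomposable summands over a hereditary algebra is tilting, hence faithful and sincere. The only organizational difference is that you establish the ``only if'' inequality of (3) directly from axiom (S2) combined with (1) and then deduce (2) formally, whereas the paper derives that inequality inside its proof of (2) by lifting a walk of $Q$ realizing $l_{Q}(x,y)$ to a path in $\Z Q^{\mathrm{op}}$; the two arguments are interchangeable.
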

\begin{proof}
(1) If $Q$ is non-Dynkin, then the pre-projective component of $\Gamma(\mod \Lambda)$ is the translation quiver $\Z_{\geq 0} Q^{\mathrm{op}}$.
If $Q$ is Dynkin, then the Auslander-Reiten quiver of $\mathrm{D}^{\mathrm{b}}(\mod \Lambda)$ is the translation quiver $\Z Q^{\mathrm{op}}$. Note that for any pair of integers $i>j$ and for any $X,Y\in \ind \Lambda$, there is no path from
 $X[i]$ to $Y[j]$ where we denote by $X[i]$ the i-th shift of $X$ in $\mathrm{D}^{\mathrm{b}}(\mod \Lambda).$ Now the assertion follows from  Lemma\;\ref{chain}.

(2)
Let $w$ be a walk $w:x=x_{0}\stackrel{\alpha_{1}^{\epsilon_{1}}}{\longrightarrow}x_{1}\stackrel{\alpha_{2}^{\epsilon_{2}}}{\longrightarrow}\cdots \stackrel{\alpha_{l}^{\epsilon_{l}}}{\longrightarrow}x_{l}=y$ from $x$ to $y$ 
such that $l_{Q}(x,y)=c^{+}(w)$.

Since $T$ is a slice tilting module, there is a walk 
\[\tau^{-r_{x}}P(x)=\tau^{-r_{x_{0}}}P(x_{0})-\tau^{-r_{x_{1}}}P(x_{1})-\cdots - \tau^{-r_{x_{l}}}P(x_{l})=\tau^{-r_{y}}P(y).\]
We also have a path
\[(r,x)=(r_{x_{0}},x_{0})\to(s_{x_{1}},x_{1})\to\cdots \to (s_{x_{l}},x_{l})=(s_{y},y)\]
on $\Z Q^{\mathrm{op}}$ associated with $w$.
Then, by definition of $(\Z Q)_{1}$, we have $s_{x_{p}}\geq r_{x_{p}}$\;($\forall p$). 
Note that there is an arrow $(t,a)\rightarrow (u,b)$ if and only if
one of the following holds:
\begin{itemize}
\item[(a)]$a\leftarrow b$ in $Q$ and $t=u$.
\item[(b)]$a\rightarrow b$ in $Q$ and $t+1=u$.
\end{itemize}
Therefore we have \[ r_{y}\leq s_{y}=r_{x}+c_{+}(w)=r_{x}+l_{Q}(x,y).\]
We remark that above inequality implies that if $T$ is a slice tilting module, then $r_{y}\leq r_{x}+l_{Q}(x,y)$ for any $x,y$. 
If there is a chain of irreducible maps from $\tau^{-r_{x}}P(x)$ to $\tau^{-r_{y}}P(y)$, we have
\[r_{y}\geq r_{x}+l_{Q}(x,y).\]
Thus $r_{y}=r_{x}+l_{Q}(x,y)$.
On the other hand if $r_{y}=r_{x}+l_{Q}(x,y)$, then we have $r_{y}=s_{y}$. In particular, there is a chain of irreducible maps from $\tau^{-r_{x}}P(x)$ to $\tau^{-r_{y}}P(y)$.

(3) We already checked that if $T$ is a slice tilting module, then $r_{y}\leq r_{x}+l_{Q}(x,y)$ for any $x,y$. Thus we assume that $r_{y}\leq r_{x}+l_{Q}(x,y)$ for any $x,y$.
By the Auslander-Reiten duality, we have
 \[\dim\Ext_{\Lambda}(\tau^{-r_{y}}P(y),\tau^{-r_{x}}P(x))=\dim\Hom_{\Lambda}(\tau^{-r_{x}}P(x),\tau^{-r_{y}+1}P(y)). \]
Since $r_{y}-1<r_{x}+l_{Q}(x,y)$, there is no path from $\tau^{-r_{x}}P(x)$ to $\tau^{-r_{y}+1}P(y)$. In particular, we obtain that $\Hom_{\Lambda}(\tau^{-r_{x}}P(x),\tau^{-r_{y}+1}P(y))=0$.
We conclude that $T$ is a tilting module. Now let $\mathcal{S}=\mathcal{S}(T)$ be a full subquiver of $\Gamma(\mod \Lambda)$ with $\mathcal{S}(T)_{0}=\{\tau^{-r_{x}}P(x)\mid x\in Q_{0}.\}$
It is sufficient to show that $\mathcal{S}$ is a slice. Since $T$ is a tilting module, we have that $\mathcal{S}$ is sincere. 
Let $W$ be a path from $\tau^{-r_{x}}P(x)$ to $\tau^{-r_{y}}P(y)$ on $\Gamma(\mod \Lambda)$ and let $Z$ be an indecomposable module on $W$.
Note that $Z$ is pre-projective. Thus we may assume that $Z=\tau^{-s}P(z)$. If $s<r_{z}$, then 
By (1), we have $r_{z}>s\geq r_{x}+l_{Q}(x,z) $. This is a contradiction. If $s>r_{z}$, then we have that $r_{y}\geq s+l_{Q}(z,y)>r_{z}+l_{Q}(z,y)$. We reach a contradiction. Therefore we have $Z=\tau^{-r_{z}}P(Z)\in \mathcal{S}_{0}$. 
Consider an arrow $X\to \tau^{-r_{y}}P(y)$ in  $\Gamma(\mod \Lambda)$. We show that either $X$ or $\tau^{-}X$ is in $\mathcal{S}_{0}$.  
We may assume that $X=\tau^{-r}P(x)$. Note that one of the following holds:
\begin{itemize}
\item[(i)] There is a path from $\tau^{-r_{x}}P(x)$ to $\tau^{-r_{y}}P(y)$.
\item[(ii)] There is a path from $\tau^{-r_{y}}P(y)$ to $\tau^{-r_{x}}P(x)$.
\end{itemize} 
In the case of (i), we have $r\geq r_{x}$ and  $r_{y}\geq r+l_{Q}(x,y)\geq r_{x}+l_{Q}(x,y)$. We conclude that $r_{y}=r+l_{Q}(x,y)=r_{x}+l_{Q}(x,y)$. Therefore we have $X\in \mathcal{S}_{0}$.
Similarly, in the case of (ii), we can check that $\tau^{-}X\in \mathcal{S}_{0}$. Now same argument implies that if there is an arrow $\tau^{-r_{x}}P(x)\to Y$, then either $Y$ or $\tau Y$ is in $\mathcal{S}_{0}$. 
Finally it is obvious that if $X\in \mathcal{S}_{0}$ then $\tau^{-}X\not\in\mathcal{S}_{0}$. We conclude that $\mathcal{S}$ is a slice.  
\end{proof}
\begin{remark}
\label{mutforslice}
Let $\mathcal{S}$ be a slice in pre-projective component of $\Gamma(\mod \Lambda)$ and let $X$ be a source of $\mathcal{S}$. If $\mathcal{S}'$ is a full sub quiver
with $\mathcal{S}'_{0}=\mathcal{S}_{0}\sqcup\{\tau^{-}X\}\setminus\{X\}$, then we can check that $\mathcal{S}'$
satisfies (S2), (S3) and (S4). By the condition (S4) and the fact that $X$ is source of $\mathcal{S}$, we have that
for any irreducible map $X\to Y$, $Y$ is in $\mathcal{S}_{0}$. Hence $X$ is a submodule of finite direct sums of copies of $T_{\mathcal{S}}/X$. We conclude that
 $T_{\mathcal{S}}/X$ is sincere. In particular, $\mathcal{S}'$  satisfies (S1) and so
$\mathcal{S}'$ is a slice. In this case, we call $\mathcal{S}'$ a mutation of $\mathcal{S}$ at a source $X$ and denote
$\mathcal{S}\stackrel{X}{\to} \mathcal{S}'$.    
\end{remark}
 \begin{lemma}
 \label{sliceisfull}
 Let $T=\bigoplus_{x=1}^{n}\tau^{-r_{x}}P(x),T'=\bigoplus_{x=1}^{n}\tau^{-s_{x}}P(x)$ are two slice tilting modules with $T \geq T'$.
Then we have the following.
\begin{itemize}
\item[(1)] For any $x$, we have $r_{x}\leq s_{x}$.
\item[(2)]  If $T>T'$, then there is a path 
\[T=T_{0}\to T_{1}\to \cdots \to T_{l}=T'\]
on $\tilt \Lambda$ such that $\add\bigoplus_{k=0}^{l} T_{k}=\add \bigoplus_{x=1}^{n}\bigoplus_{r=r_{x}}^{r'_{x}}\tau^{-r}P(x).$
\end{itemize}  
 \end{lemma}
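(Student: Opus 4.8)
The plan is to reduce both statements to the combinatorial description of slice tilting modules in Lemma~\ref{chain-condition}(3) together with the path criterion on $\Z Q^{\mathrm{op}}$ in Lemma~\ref{chain}. For part~(1): since slices are sincere, $T\geq T'$ means $\Ext^1_\Lambda(T,T')=0$, so in particular $\Ext^1_\Lambda(\tau^{-r_x}P(x),\tau^{-s_x}P(x))=0$ for every $x$. If $r_x=0$ there is nothing to prove, so I would assume $r_x\geq 1$ and, for a contradiction, $r_x-1\geq s_x$. Then Lemma~\ref{chain-condition}(1) produces a chain of irreducible maps $\tau^{-s_x}P(x)\to M_1\to\cdots\to\tau^{-r_x+1}P(x)$ between indecomposable preprojective modules; by Remark~\ref{prin} the hypothesis of Lemma~\ref{dimup} holds for $\Lambda$, so each map in the chain is injective and their composite is a nonzero morphism $\tau^{-s_x}P(x)\to\tau^{-r_x+1}P(x)$. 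But the Auslander--Reiten duality used in the proof of Lemma~\ref{chain-condition}(3) shows that this $\Hom$-space and $\Ext^1_\Lambda(\tau^{-r_x}P(x),\tau^{-s_x}P(x))$ have the same dimension, contradicting the vanishing above. Hence $r_x\leq s_x$.

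For part~(2) I would build the path by repeated source-mutations of slices as in Remark~\ref{mutforslice}. First note that mutating a slice $\mathcal S$ at a source $\tau^{-r_z}P(z)$ replaces its tuple $(r_x)$ by the tuple obtained by raising $r_z$ to $r_z+1$ and fixing the other coordinates (as $\tau^{-}(\tau^{-r_z}P(z))=\tau^{-(r_z+1)}P(z)$); moreover, since an arrow of $Q$ has $l_Q$-value $1$ in its own direction and $0$ in the reverse, the slice inequalities for $\mathcal S(T)$ pass through Lemma~\ref{chain} and Auslander--Reiten duality to give $\Ext^1_\Lambda(T_{\mathcal S},T_{\mathcal S'})=0$, so $T_{\mathcal S}>T_{\mathcal S'}$. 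Since the mutation exchanges a single indecomposable summand and $T_{\mathcal S'}$ is again a tilting module, $T_{\mathcal S'}$ is the Riedtmann--Schofield mutation of $T_{\mathcal S}$ at $\tau^{-r_z}P(z)$, hence $T_{\mathcal S}\to T_{\mathcal S'}$ is a Hasse arrow of $\tilt\Lambda$.

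The crux is choosing the source to mutate. Assume $T>T'$ and put $A:=\{x\mid r_x<s_x\}$, which is nonempty by part~(1). As $\mathcal S(T)$ is a finite full subquiver of the acyclic translation quiver $\Z Q^{\mathrm{op}}$, the full subquiver of $\mathcal S(T)$ on $\{(r_x,x)\mid x\in A\}$ has a source $(r_z,z)$, and I claim $(r_z,z)$ is a source of $\mathcal S(T)$ itself. Indeed, any arrow of $\mathcal S(T)$ into $(r_z,z)$ comes from $(r_w,w)$ with $w$ a $Q$-neighbour of $z$, and then the slice inequality $s_z\leq s_w+l_Q(w,z)$ for $\mathcal S(T')$ (with $l_Q(w,z)\in\{0,1\}$ depending on the direction of the arrow between $w$ and $z$) together with $r_z<s_z$ forces $s_w>r_w$, i.e.\ $w\in A$, contradicting the choice of $(r_z,z)$. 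Mutating at $(r_z,z)$ yields a slice tilting module $T_1$ with $T>T_1$, whose tuple is still $\leq(s_x)$ coordinatewise (the one changed coordinate becomes $r_z+1\leq s_z$), and re-running the Lemma~\ref{chain}/Auslander--Reiten computation (using part~(1)) shows $\Ext^1_\Lambda(T_1,T')=0$, i.e.\ $T_1\geq T'$. Since $\sum_x(s_x-r_x)$ strictly decreases, induction on this nonnegative integer produces a path $T=T_0\to T_1\to\cdots\to T_l=T'$ of Hasse arrows through slice tilting modules; and as each step raises exactly one coordinate by one, along the path the $x$-th coordinate attains every value in $[r_x,s_x]$, so $\add\bigoplus_{k=0}^{l}T_k=\add\bigoplus_{x=1}^{n}\bigoplus_{r=r_x}^{s_x}\tau^{-r}P(x)$.

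The hard part is exactly this selection step — guaranteeing at every stage a source of the current slice that still lies strictly to the left of $T'$; the remainder is routine manipulation of the inequalities $r_y\leq r_x+l_Q(x,y)$. I would also check the projective and injective boundary cases when invoking Auslander--Reiten duality, but they cause no trouble here since all modules involved are preprojective and non-injective (so the relevant stable Hom-spaces coincide with the ordinary ones), and the one-summand exchange genuinely defines a covering relation because $T_{\mathcal S}/\tau^{-r_z}P(z)$ is a sincere almost complete tilting module by Remark~\ref{mutforslice}.
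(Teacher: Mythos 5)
Your part (2) is sound and is essentially the paper's argument: induction on $\sum_{x}(s_{x}-r_{x})$, mutating the slice at a source where the two tuples still differ. Your way of locating that source (take a source of the full subquiver on $A=\{x\mid r_{x}<s_{x}\}$ and show it is already a source of the whole slice) is a constructive variant of the paper's proof by contradiction, and your verification that the mutated slice $T_{1}$ still satisfies $T_{1}\geq T'$ via $r^{(1)}_{y}\leq s_{y}\leq s_{x}+l_{Q}(x,y)$, Lemma~\ref{chain} and Auslander--Reiten duality uses only the valid implication ``no path $\Rightarrow$ no Hom'' and goes through.

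Part (1), however, has a genuine gap. You reduce everything to the nonvanishing of $\Hom_{\Lambda}(\tau^{-s_{x}}P(x),\tau^{-r_{x}+1}P(x))$ and justify it by saying that the composite of the chain of irreducible maps is nonzero because each map is injective by Lemma~\ref{dimup}. But Lemma~\ref{dimup} requires every vertex of $Q$ to meet at least two arrows; this holds for $\tilde{A}_{n,1}$ but fails for a general connected acyclic quiver, which is the setting of Section~3, and in particular for type $A_{n}$, where Lemma~\ref{sliceisfull} is actually invoked in the proof of Theorem~\ref{interval conj for type a}. (Remark~\ref{prin} is a consequence of Lemma~\ref{dimup} for the specific quiver $\tilde{A}_{n,1}$; it does not verify its hypothesis here.) Without injectivity the composite can die on the mesh relations: for $Q=1\to 2$ the path $P(2)\to P(1)\to \tau^{-}P(2)$ composes to zero and $\Hom_{\Lambda}(P(2),\tau^{-}P(2))=0$. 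More generally $\Ext^{1}_{\Lambda}(\tau^{-r}P(x),\tau^{-s}P(x))$ can vanish for $r\geq s+2$ in type $A$, so the single-summand contradiction you aim for simply is not available; your argument only rules out $r_{x}=s_{x}+1$. The paper instead works with the whole module $T$: assuming $r_{x}\geq s_{x}$, it shows $\Ext^{1}_{\Lambda}(\tau^{-s_{x}}P(x),T)=0$ --- here one only needs the correct direction ``nonzero Hom between preprojectives implies a path'', after which the path is excluded numerically by Lemma~\ref{chain-condition} --- and then concludes $\tau^{-s_{x}}P(x)\in\add T$, hence $s_{x}=r_{x}$, because $T\oplus\tau^{-s_{x}}P(x)$ is rigid and a tilting module is maximal rigid. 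You would need to replace your contradiction by this (or an equivalent) argument for (1) to hold in the generality in which the lemma is used.
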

 \begin{proof}
 (1)\;Suppose that $r_{x}\geq s_{x}$. 
If $\Ext_{\Lambda}^{1}(\tau^{-s_{x}}P(x),T)\neq 0$, then by Auslander-Reiten duality, we have $\Hom_{\Lambda}(\tau^{-r_{y}}P(y),\tau^{-s_{x}+1}P(x))\neq 0$ for some $y$. Thus there is a chain of irreducible maps
from $\tau^{-r_{y}}P(y)$ to $\tau^{-s_{x}+1}P(x)$. In particular, there exists a chain of irreducible maps from $\tau^{-r_{y}}P(y)$ to $\tau^{-r_{x}}P(x)$.
 By Lemma\;\ref{chain-condition}, we obtain
\[ r_{y}+l_{Q}(y,x)= r_{x}>s_{x}-1\geq r_{y}+l_{Q}(y,x).\]
This is a contradiction. Therefore $\Ext_{\Lambda}^{1}(\tau^{-s_{x}}P(x),T)=0$. Since $\Ext_{\Lambda}^{1}(T,T')=0$, we have that $s_{x}=r_{x}$.

(2)\;Assume that $T=T_{\mathcal{S}}>T'=T_{\mathcal{S}'}$. We prove the assertion by using induction on $t=t(T,T'):=\Sigma _{x=1}^{n}(s_{x}-r_{x})$.
If $t=1$, then the assertion is follows from $(1)$.  We assume that the assertion holds for any $t<m$ $(m\geq 2)$, and consider the case $t=m$. 
Let $\psi =\{a\mid \tau^{-r_{a}}P(a)\ \mathrm{is\ a\ source\ of}\ \mathcal{S}\}$. 
Now we suppose that $r_{a}=s_{a}$ for any $a\in \psi$. Since for any $x\in \{1,2,\dots,n\}\setminus \psi$, there is a element
$a$ in $\psi$ such that there is a chain of irreducible maps from $\tau^{-r_{a}}P(a)$ to $\tau^{-r_{x}}P(x)$. By using  
 Lemma\;\ref{chain-condition}\;(2), we have that \[ s_{x}\geq r_{x}=r_{a}+l_{Q}(a,x)=s_{a}+l_{Q}(a,x).\]
Therefore, by Lemma\;\ref{chain-condition}\;(3), we obtain $s_{x}=r_{x}$. Thus $T=T'$. This is a contradiction.
 Hence there is a element $a\in \psi$ such that $s_{a}>r_{a}$. Then we have $T_{\mathcal{S}}\rightarrow T_{\mathcal{S}''}>T_{\mathcal{S}'}$,
where $\mathcal{S}''$ is a complete slice obtained from $\mathcal{S}$ by applying mutation at $\tau^{-r_{a}}P(a)$.
Thus we have $T_{\mathcal{S}''}=\bigoplus_{x=1}^{n}\tau^{-r''_{x}}P(x)$ where $r''_{x}=r_{x}$ for any $x\in\{1,\dots\, n\}\setminus\{a\}$
and $r''_{a}=r_{a}+1$.
Since $t(T'',T')=m-1$, we have  that there is a path 
\[T''=T_{1}\to T_{2}\to \cdots \to T_{l}=T'\]
on $\tilt \Lambda$ such that $\add\bigoplus_{k=1}^{l} T_{k}=\add \bigoplus_{x=1}^{n}\bigoplus_{r=r''_{x}}^{r'_{x}}\tau^{-r}P(x).$
Therefore the path
\[T=T_{0}\to T_{1}\to\cdots\to T_{l}=T'\]
satisfies the desired property. 
 \end{proof}
\section{Lengths of maximal green sequences for quivers of type $A_{n}$}
In this section we always assume that $Q$ is a quiver of type $A_{n}$ and show the following.
\begin{theorem}
\label{interval conj for type a} We have
$ \{l\in \Z_{\geq 0} \mid \mathrm{green}_{l}(Q)\neq \emptyset\}=[n,\frac{n(n+1)}{2}].$
\end{theorem}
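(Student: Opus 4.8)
The plan is to translate the statement into the Hasse quiver $\mathcal H(Q):=\mathrm{Hasse}(\stilt(kQ))$. By the results recalled above, $\mathcal H(Q)\cong\overrightarrow{\mathrm E}(Q)$ (via \cite{AIR,BDP}), maximal green sequences for $Q$ correspond bijectively to maximal directed paths in $\mathcal H(Q)$ from the maximum $\Lambda=\bigoplus_i P(i)$ to the minimum $0$, and $l(\mathbf i)$ is the number of arrows of the corresponding path. So it suffices to show that the set of lengths of such maximal paths is $[n,\tfrac{n(n+1)}2]$.

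The lower bound comes from a monovariant. For a support $\tau$-tilting pair $(M,P)$, a single mutation exchanges exactly one indecomposable, so $|M|$ changes by $0$ or $\pm1$; moreover, moving an indecomposable projective from $\add P$ into the module part strictly enlarges $\mathrm{Fac}\,M$ (the projective cannot lie in $\mathrm{Fac}\,M$ since $\Hom(P,M)=0$), hence strictly raises the element of $\stilt(kQ)$. Thus along a descending arrow $|M|$ only stays equal or drops by one, and since $|M|=n$ at $\Lambda$ and $|M|=0$ at $0$, every maximal path has at least $n$ arrows. A path of length exactly $n$ (the ``lazy'' one) is built by repeatedly mutating at a source of the current quiver: for a source $s$ of $Q$ one has $\Hom(P(s),P(j))=(\dimvec P(j))_s=0$ for all $j\neq s$ (no path ends at a source), so $\mu_s(\Lambda)=(\bigoplus_{j\neq s}P(j),\,P(s))$ is support-reducing, and the interval below it in $\stilt(kQ)$ is a copy of $\stilt(k(Q\setminus s))$ with $Q\setminus s$ again of type $A$ (possibly disconnected); iterating produces a descending path with $n$ support-reducing steps.

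For the upper bound I would attach to each arrow $T\to T'$ of $\mathcal H(Q)$ the brick it crosses. Since $Q$ is of type $A$, every mutation fits into a short exact sequence $0\to L\to E\to L'\to0$ (or its dual) with $L,L'$ interval modules $L(i,j)$, and using the explicit $\Hom$- and $\Ext$-vanishing criteria of Lemmas~\ref{hv} and \ref{ev} one checks that along a maximal green sequence the interval modules crossed at distinct steps are pairwise distinct. As $A_n$ has exactly $\tfrac{n(n+1)}2$ positive roots (equivalently indecomposables), no maximal path has more than $\tfrac{n(n+1)}2$ arrows; a path attaining this bound (the ``eager'' one) crosses every $L(i,j)$ once, and is exhibited either as the reduced word of the longest element of $S_{n+1}$ in the linear orientation (transported to other orientations by reflection functors) or via the induction below.

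The remaining and main point is that there are no gaps: every $\ell\in[n,\tfrac{n(n+1)}2]$ is attained. Here I would use that, because $Q$ is of type $A$, the only rank-two subquivers occurring in $\mathrm{Mut}(\hat Q)$ are $A_1\times A_1$ and $A_2$, so the minimal relations in $\mathcal H(Q)$ are squares and oriented pentagons. Given a maximal path of length $\ell>n$, one shows it must traverse the ``long side'' (length $3$) of some pentagon sub-configuration; rerouting it across the ``short side'' (length $2$) gives a maximal path of length $\ell-1$, and iterating brings the length down to $n$, so all of $[n,\ell_{\max}]=[n,\tfrac{n(n+1)}2]$ is realized. Alternatively one runs an induction on $n$, decomposing a maximal path at its last support-reducing step into a piece governed by a smaller type-$A$ quiver (using that the length set of a disjoint union is the Minkowski sum of those of the parts, hence again an interval) plus a bounded tail. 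I expect the genuine difficulty to be exactly this no-gaps step: bounding the lengths above and below is comparatively routine, whereas showing that a length-reducing rerouting is always available — uniformly over all orientations of $A_n$, whose posets $\stilt(kQ)$ are genuinely different — is where the real combinatorial work lies.
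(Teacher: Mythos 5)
Your bounds are essentially right: the lower bound via the support-counting monovariant and the upper bound via ``each arrow discards an indecomposable that never returns'' (there are only $\tfrac{n(n+1)}{2}$ indecomposables in type $A_n$) are both sound and close to what the paper uses implicitly. The problem is the step you yourself flag as the real content: you do not prove that every length in $[n,\tfrac{n(n+1)}{2}]$ is attained. Your first strategy rests on the claim that any maximal path of length $\ell>n$ must traverse the ``long side'' of some pentagon, so that a local rerouting shortens it by one. That claim is exactly the no-gap statement in disguise and is nowhere justified; it does not follow from the mere fact that the rank-two subconfigurations are squares and pentagons (one would need, at minimum, a polygon-deformation/connectivity theorem for maximal chains in $\stilt(kQ)$, which you neither cite nor prove, and even then one must argue the deformation can be steered monotonically in length). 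Your second strategy, ``decompose at the last support-reducing step,'' also does not work as stated: the initial segment of the path before support first drops lives in $\tilt(\Lambda)$ with full support and is not governed by a smaller type-$A$ quiver, so the inductive hypothesis does not apply to it. Likewise, transporting the longest-element reduced word by reflection functors to get a maximal path of length $\tfrac{n(n+1)}{2}$ for an arbitrary orientation is asserted, not proved (reflection functors do not obviously carry maximal green sequences to maximal green sequences of the same length).

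For contrast, the paper closes this gap constructively by induction on $n$, producing two overlapping families of maximal paths. The low range $[n,\tfrac{n(n-1)}{2}+1]$ comes from paths that immediately drop the support at a source, using $\{T\le \Lambda/P(1)\}\simeq\stilt(k(Q\setminus\{1\}))$ and the inductive hypothesis for $A_{n-1}$. The high range comes from paths forced through the unique sincere indecomposable $X$: Lemmas \ref{form of max for x} and \ref{form of min for x} identify the maximum $T$ and minimum $T'$ of $\stilt_{X}(\Lambda)$ as slice tilting modules, Theorem \ref{stilting reduction} gives $\stilt_{X}(\Lambda)\simeq\stilt(\Lambda_{1})\times\stilt(\Lambda_{2})$ for two smaller type-$A$ algebras (so the inductive hypothesis controls the lengths from $T$ to $T'$), and Lemma \ref{sliceisfull} pins down the lengths of the segments $\Lambda\to T$ and $T'\to 0$ by showing a path between comparable slice tilting modules sweeps out all intermediate preprojectives. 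The proof then verifies arithmetically that the two ranges overlap for $n\ge 6$ and checks $n\le 5$ by hand. If you want to salvage your write-up, you need either to supply a proof of the pentagon-rerouting claim (or invoke and verify a polygon-connectivity theorem for these lattices) or to switch to an explicit two-family construction of the paper's kind.
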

Let $\Gamma=\Gamma(\mod \Lambda)$ be the Auslander-Reiten quiver of $\mod \Lambda$. Then 
\[\Gamma_{1}=\Gamma_{1}(1)\sqcup\Gamma_{1}(2)\]
where $\Gamma_{1}(1)$ (resp. $\Gamma_{1}(2)$) is the set of all arrows of following form:
\[\tau^{-r_{i}}P(i)\to \tau^{-r_{j}}P(j)\ \mathrm{with\ }j=i-1\ (\mathrm{resp.}\ \tau^{-r_{i}}P(i)\to \tau^{-r_{j}}P(j)\ \mathrm{with\ }j=i+1).\]
 We denote by $X$ a unique sincere indecomposable $\Lambda$-module. Since we have
 \[\Hom_{\Lambda}(P(1),X)\neq 0\neq \Hom_{\Lambda}(P(n),X)\]
 and $\Gamma$ is standard (see \cite[Section\;2.4]{R}), we have that there is a unique path
 \[P(1)=\tau^{-r_{1}}P(1)\to \tau^{-r_{2}}P(2)\to\cdots \to\tau^{-r_{i}}P(i)=X\] in $\Gamma$ and there is a unique path
 \[P(n)=\tau^{-r_{n}}P(n)\to \tau^{-r_{n-1}P(n-1)}\to\cdots \to\tau^{-r_{i}}P(i)=X\] in $\Gamma$.
 We set $\Xi :=\{\tau^{-r}P(j)\in \ind \mod \Lambda \mid r<r_{j}\}$ and $\bar{\Xi}:=\{\tau^{-r}P(j)\in \ind \mod\Lambda\mid r \leq r_{j}\} $.
 \begin{lemma}
 \label{form of max for x}
 $\Ext_{\Lambda}^{1}(X,\Xi)\neq 0$. Moreover, $\bigoplus_{j=1}^{n}\tau^{-r_{j}}P(j)$ is a maximum element of
 $\stilt_{X}(\Lambda)$.
 \end{lemma}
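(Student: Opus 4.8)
The plan is to prove that $T:=\bigoplus_{j=1}^{n}\tau^{-r_{j}}P(j)$ is a slice tilting module, that it belongs to $\stilt_{X}(\Lambda)$, and that it dominates every element of $\stilt_{X}(\Lambda)$. The crucial preliminary observation is the identity
\[
r_{i}=r_{j}+l_{Q}(j,i)\qquad(\forall\,j\in Q_{0}).
\]
Every vertex $j$ lies on one of the two defining paths (the first realizes the second coordinates $1,2,\dots,i$ in order, the second realizes $n,n-1,\dots,i$), and along either path the running value of $r$ changes by exactly $1$ when an edge of $Q$ is crossed in its own orientation and by $0$ otherwise; this is precisely the description of the arrows of $\Z Q^{\mathrm{op}}$ recalled in the proof of Lemma~\ref{chain}. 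Summing these increments over the (unique reduced) walk from $j$ to $i$ gives $l_{Q}(j,i)$, whence the identity. In particular $r_{j}\le r_{i}$ for all $j$, and $\Xi\ne\emptyset$ precisely when $X$ is non-projective, i.e.\ $r_{i}\ge 1$.

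First I would prove $\Ext^{1}_{\Lambda}(X,N)\ne 0$ for every $N\in\Xi$ (the case $\Xi=\emptyset$ being vacuous). Write $N=\tau^{-r}P(j)$ with $0\le r<r_{j}$; then $r_{i}\ge 1$, so $\tau X=\tau^{-(r_{i}-1)}P(i)$ is an honest pre-projective module. By Auslander--Reiten duality $\Ext^{1}_{\Lambda}(X,N)\ne 0$ if and only if $\Hom_{\Lambda}(N,\tau X)\ne 0$; since $\Gamma$ is standard, its pre-projective component is the translation quiver $\Z_{\ge 0}Q^{\mathrm{op}}$, and by Lemma~\ref{chain} this $\Hom$ is non-zero exactly when $r_{i}-1\ge r+l_{Q}(j,i)$. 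The latter holds because $r\le r_{j}-1=r_{i}-l_{Q}(j,i)-1$. This settles the first assertion.

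Next, $T$ is a slice tilting module: the subadditivity $l_{Q}(y,i)\le l_{Q}(y,x)+l_{Q}(x,i)$ (concatenate minimizing walks), combined with the identity above, gives
\[
r_{x}=r_{i}-l_{Q}(x,i)\;\le\;r_{i}-l_{Q}(y,i)+l_{Q}(y,x)=r_{y}+l_{Q}(y,x)
\]
for all $x,y$, so $T$ is a slice tilting module by Lemma~\ref{chain-condition}(3). Since $X=\tau^{-r_{i}}P(i)\in\add T$, we have $T\in\stilt_{X}(\Lambda)$; and because $X$ is sincere, every member of $\stilt_{X}(\Lambda)$ is in fact a tilting $\Lambda$-module having $X$ as a direct summand.

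Finally, to see $T$ is the maximum of $\stilt_{X}(\Lambda)$, take any $T'\in\stilt_{X}(\Lambda)$. As $Q$ is of Dynkin type $A_{n}$, every indecomposable $\Lambda$-module is pre-projective, and a tilting module cannot contain two summands $\tau^{-s}P(x),\tau^{-s'}P(x)$ with the same $x$ (an $\Ext^{1}$-computation of the above kind would produce a self-extension among its summands); hence $T'=\bigoplus_{x}\tau^{-s_{x}}P(x)$. Rigidity of $T'$ together with the first assertion forces $s_{x}\ge r_{x}$ for every $x$. Then for all $x,y$,
\[
\Ext^{1}_{\Lambda}(\tau^{-r_{x}}P(x),\tau^{-s_{y}}P(y))\ne 0\iff\Hom_{\Lambda}(\tau^{-s_{y}}P(y),\tau^{-(r_{x}-1)}P(x))\ne 0,
\]
and the right-hand side would require $r_{x}-1\ge s_{y}+l_{Q}(y,x)\ge r_{y}+l_{Q}(y,x)\ge r_{x}$, which is impossible; thus $\Ext^{1}_{\Lambda}(T,T')=0$. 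Since $\mathrm{supp}(T)=\mathrm{supp}(T')=Q_{0}$, we conclude $T\ge T'$, so $T$ is the maximum. The one genuine obstacle is the preliminary identity $r_{i}=r_{j}+l_{Q}(j,i)$ — equivalently, recognizing that $\{\tau^{-r_{j}}P(j)\}_{j}$ is exactly the complete slice through the sincere indecomposable $X$; after that, everything reduces to short bookkeeping with $l_{Q}$ and the Auslander--Reiten-duality arguments already used in the proofs of Lemmas~\ref{chain-condition} and~\ref{sliceisfull}.
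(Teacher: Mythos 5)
Your overall architecture is sound and close to the paper's: exhibit $T=\bigoplus_j\tau^{-r_j}P(j)$ as a (slice) tilting module containing $X$, use the first assertion to show that no indecomposable summand of any $T'\in\stilt_X(\Lambda)$ lies in $\Xi$, and then kill $\Ext^1(T,T')$ by Auslander--Reiten duality and the non-existence of the relevant paths. The identity $r_j=r_i-l_Q(j,i)$ and the subadditivity argument are correct and give a slicker verification that $T$ is tilting than the paper's direct computation. But the proof of the first assertion has a genuine gap. You reduce $\Ext^1_\Lambda(X,N)\neq 0$ to $\Hom_\Lambda(N,\tau X)\neq 0$ and then claim this Hom ``is non-zero exactly when $r_i-1\geq r+l_Q(j,i)$,'' citing Lemma~\ref{chain}. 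That lemma (and Lemma~\ref{chain-condition}(1)) only asserts the existence of a \emph{path} in the translation quiver; the implication ``path in the AR quiver $\Rightarrow$ nonzero morphism'' is false because of the mesh relations. Already for $Q=(1\to 2)$ one has a path $P(2)\to P(1)\to\tau^{-}P(2)$ while $\Hom(P(2),\tau^{-}P(2))=\Hom(S(2),S(1))=0$, and the inequality $s\geq r+l_Q(x,x)$ holds. Everywhere else the paper only uses the safe direction (no path $\Rightarrow$ Hom $=0$); for this lemma it must, and does, do real work: for each $X'\in\bar\Xi$ it exhibits an explicit nonzero morphism to $X$ by composing a sectional (``horizontal'') path from $X'$ onto the slice module $\tau^{-r_{i-q}}P(i-q)$ --- nonzero modulo the mesh relations --- with a map $\tau^{-r_{i-q}}P(i-q)\to X$ that is injective by Lemma~\ref{hv} and the faithfulness of $X$. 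Some such construction is unavoidable; the combinatorial path criterion alone does not give you $\Ext^1(X,\Xi)\neq 0$.

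A secondary, though ultimately harmless, error: it is not true that a tilting module over a type $A$ path algebra has at most one summand in each $\tau$-orbit. For $Q=(1\to 2\to 3)$ the tilting module $P(1)\oplus I(1)\oplus P(3)$ (which appears in Example~\ref{exstilta3}(1)) has $P(3)=S(3)$ and $I(1)=S(1)=\tau^{-2}P(3)$ in the same orbit, and no self-extension arises because $\Hom(S(3),\tau S(1))=\Hom(S(3),S(2))=0$. Fortunately your maximality argument never needs ``one summand per orbit'': it only needs that each indecomposable summand of $T'$ is some $\tau^{-s}P(y)$ with $s\geq r_y$ (which follows from the first assertion plus rigidity), and that part of your computation is correct. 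So the proposal stands or falls with the first assertion, where the missing ingredient is precisely the paper's mesh-relation argument.
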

 \begin{proof}
 By following rule, we regard $\bar{\Xi}$ as a full subquiver of $\Z\times \Z$.
 \begin{itemize}
 \item $X=(0,0)$.
 \item If $Y\to Z \in \Gamma_{1}(1)$, then $Y=Z-(1,0)$.
 \item If $Y\to Z \in \Gamma_{1}(2)$, then $Y=Z-(0,1)$.
 \end{itemize}
 By definition, we have $\bar{\Xi}\subset \{(-p,-q)\mid 0\leq p\leq n-i,\ 0\leq q \leq i-1  \}$ and
 $\Xi\subset\{(-p,-q)\mid 0<p\leq n-i,\ 0< q\leq i-1\}$. 
Let $X'=(-p,-q)\in \bar{\Xi}$. We have that
\[(-p,-q)\to (-p+1,-q)\to\cdots\to (0,-q)\not\equiv 0\ \mathrm{mod}\stackrel{\mathrm{mesh}}{\sim},\]
where $\stackrel{\mathrm{mesh}}{\sim }$ is the mesh relation.
This means that $\Hom_{\Lambda}(X',\tau^{-r_{i-q}}P(i-q))\neq 0$. We remark that $X$ is faithful and 
$\Hom_{\Lambda}(\tau^{-r_{i-q}}P(i-q),X)\neq 0$. Therefore Lemma\;\ref{hv} implies that $0\neq f\in \Hom_{\Lambda}(\tau^{-r_{i-q}}P(i-q), X)$ is  injective. In particular, we have that \[\Hom_{\Lambda}(X',X)=0.\]
Note that for any $X''\in \Xi$, we have that $\tau^{-1}X''\in \bar{\Xi}$.  Therefore we have
 \[\dim\Ext_{\Lambda}^{1}(X,X'')=\dim\Hom_{\Lambda}(\tau^{-1}X'',X)\neq 0\]
 for any $X''\in \Xi$.  Since there is no path from $X$ to $\tau^{-r_{j}+1}P(j)$, we have
 \[\dim\Ext_{\Lambda}^{1}(\tau^{-r_{j}}P(j),X)=\dim\Hom_{\Lambda}(X,\tau^{-r_{j}+1}P(j))=0.\]
 Since there is no path from $\tau^{-r_{j}-1}P(j)$ to $X$, we also have
 \[\dim\Ext_{\Lambda}^{1}(X,\tau^{-r_{j}}P(j))=\dim\Hom_{\Lambda}(\tau^{-r_{j}-1}P(j),X)=0.\]
  We conclude that $\bigoplus_{j=1}^{n}\tau^{-r_{j}}P(j)$ is maximum element of $\stilt_{X}(\Lambda)$.  
 \end{proof}
\[ 
{\unitlength 0.1in%
\begin{picture}( 39.2000, 30.6000)( 14.7000,-39.8000)%
%
\special{pn 8}%
\special{pa 1790 3600}%
\special{pa 2390 3000}%
\special{pa 1990 2600}%
\special{pa 2390 2200}%
\special{pa 2190 2000}%
\special{pa 2790 1400}%
\special{pa 2390 1000}%
\special{fp}%
%
\special{pn 8}%
\special{pa 4390 1000}%
\special{pa 4990 1600}%
\special{pa 4590 2000}%
\special{pa 4990 2400}%
\special{pa 4790 2600}%
\special{pa 5390 3200}%
\special{pa 4990 3600}%
\special{fp}%
%
\special{pn 8}%
\special{pa 3400 2000}%
\special{pa 4400 1000}%
\special{dt 0.045}%
%
\special{pn 8}%
\special{pa 3400 2000}%
\special{pa 5000 3600}%
\special{dt 0.045}%
\put(33.3000,-22.8000){\makebox(0,0)[lb]{$X$}}%
%
\special{pn 8}%
\special{pa 2700 1500}%
\special{pa 3200 2000}%
\special{fp}%
%
\special{pn 8}%
\special{pa 3200 2000}%
\special{pa 2300 2900}%
\special{fp}%
%
\special{pn 4}%
\special{pa 3000 1800}%
\special{pa 2100 2700}%
\special{fp}%
\special{pa 3030 1830}%
\special{pa 2130 2730}%
\special{fp}%
\special{pa 3060 1860}%
\special{pa 2160 2760}%
\special{fp}%
\special{pa 3090 1890}%
\special{pa 2190 2790}%
\special{fp}%
\special{pa 3120 1920}%
\special{pa 2220 2820}%
\special{fp}%
\special{pa 3150 1950}%
\special{pa 2250 2850}%
\special{fp}%
\special{pa 3180 1980}%
\special{pa 2280 2880}%
\special{fp}%
\special{pa 2970 1770}%
\special{pa 2070 2670}%
\special{fp}%
\special{pa 2940 1740}%
\special{pa 2040 2640}%
\special{fp}%
\special{pa 2910 1710}%
\special{pa 2010 2610}%
\special{fp}%
\special{pa 2880 1680}%
\special{pa 2380 2180}%
\special{fp}%
\special{pa 2850 1650}%
\special{pa 2350 2150}%
\special{fp}%
\special{pa 2820 1620}%
\special{pa 2320 2120}%
\special{fp}%
\special{pa 2790 1590}%
\special{pa 2290 2090}%
\special{fp}%
\special{pa 2760 1560}%
\special{pa 2260 2060}%
\special{fp}%
\special{pa 2730 1530}%
\special{pa 2230 2030}%
\special{fp}%
%
\special{pn 4}%
\special{pa 1830 1570}%
\special{pa 1864 1565}%
\special{pa 1898 1561}%
\special{pa 1931 1557}%
\special{pa 1965 1553}%
\special{pa 1998 1550}%
\special{pa 2031 1548}%
\special{pa 2063 1547}%
\special{pa 2095 1548}%
\special{pa 2126 1550}%
\special{pa 2157 1553}%
\special{pa 2187 1558}%
\special{pa 2216 1566}%
\special{pa 2244 1575}%
\special{pa 2271 1587}%
\special{pa 2298 1600}%
\special{pa 2323 1616}%
\special{pa 2348 1633}%
\special{pa 2372 1652}%
\special{pa 2395 1673}%
\special{pa 2418 1695}%
\special{pa 2441 1718}%
\special{pa 2463 1742}%
\special{pa 2505 1794}%
\special{pa 2526 1821}%
\special{pa 2546 1849}%
\special{pa 2567 1878}%
\special{pa 2607 1936}%
\special{pa 2626 1965}%
\special{pa 2630 1970}%
\special{fp}%
%
\special{pn 4}%
\special{pa 2626 1965}%
\special{pa 2630 1970}%
\special{fp}%
\special{sh 1}%
\special{pa 2630 1970}%
\special{pa 2604 1905}%
\special{pa 2597 1928}%
\special{pa 2573 1930}%
\special{pa 2630 1970}%
\special{fp}%
\put(16.5000,-16.4000){\makebox(0,0)[lb]{$\Xi$}}%
%
\special{pn 8}%
\special{pa 2400 1000}%
\special{pa 4400 1000}%
\special{fp}%
%
\special{pn 8}%
\special{pa 1800 3600}%
\special{pa 5000 3600}%
\special{fp}%
\put(14.7000,-36.6000){\makebox(0,0)[lb]{$P(n)$}}%
\put(20.6000,-10.5000){\makebox(0,0)[lb]{$P(1)$}}%
%
\special{pn 20}%
\special{pa 1800 3600}%
\special{pa 3400 2000}%
\special{pa 2400 1000}%
\special{fp}%
%
\special{pn 8}%
\special{pa 3410 3860}%
\special{pa 3415 3827}%
\special{pa 3419 3793}%
\special{pa 3424 3760}%
\special{pa 3428 3726}%
\special{pa 3444 3594}%
\special{pa 3447 3561}%
\special{pa 3451 3495}%
\special{pa 3453 3463}%
\special{pa 3454 3431}%
\special{pa 3455 3398}%
\special{pa 3455 3366}%
\special{pa 3454 3335}%
\special{pa 3453 3303}%
\special{pa 3444 3210}%
\special{pa 3439 3179}%
\special{pa 3433 3149}%
\special{pa 3419 3089}%
\special{pa 3410 3060}%
\special{pa 3400 3031}%
\special{pa 3389 3002}%
\special{pa 3377 2974}%
\special{pa 3364 2946}%
\special{pa 3350 2918}%
\special{pa 3336 2891}%
\special{pa 3320 2863}%
\special{pa 3304 2836}%
\special{pa 3287 2809}%
\special{pa 3269 2783}%
\special{pa 3251 2756}%
\special{pa 3232 2730}%
\special{pa 3172 2652}%
\special{pa 3151 2626}%
\special{pa 3130 2601}%
\special{pa 3109 2575}%
\special{pa 3087 2550}%
\special{pa 3066 2524}%
\special{pa 3022 2474}%
\special{pa 3010 2460}%
\special{fp}%
%
\special{pn 8}%
\special{pa 3022 2474}%
\special{pa 3010 2460}%
\special{fp}%
\special{sh 1}%
\special{pa 3010 2460}%
\special{pa 3038 2524}%
\special{pa 3045 2500}%
\special{pa 3069 2498}%
\special{pa 3010 2460}%
\special{fp}%
\put(32.0000,-41.1000){\makebox(0,0)[lb]{$\bigoplus_{j=1}^{n}\tau^{-r_{j}}P(j)$}}%
\end{picture}}\]\vspace{5pt}
 
 Let $X=\tau^{s_{n-i}}I(n-i)\to \tau^{s_{n-i+1}}I(n-i+1)\to \cdots\to \tau^{s_{n}}I(n)=I(n) $ be a unique path
 from $X$ to $I(n)$ and $X=\tau^{s_{n-i}}I(n-i)\to \tau^{s_{n-i-1}}I(n-i-1)\to \cdots\to \tau^{s_{1}}I(1)=I(1)$
 a unique path from $X$ to $I(1)$. Then we have following dual result for Lemma\;\ref{form of max for x}.
\begin{lemma}
\label{form of min for x}
$\bigoplus_{j=1}^{n}\tau^s_{j}I(j)$ is a minimum element of $\stilt_{X}(\Lambda)$.
\end{lemma}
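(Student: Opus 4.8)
The plan is to deduce this from Lemma \ref{form of max for x} applied to the opposite algebra $\Lambda^{\mathrm{op}}=kQ^{\mathrm{op}}$ (again of type $A_n$), transported back along the standard $k$-duality $D:=\Hom_k(-,k)$. First I would record the functoriality of $D$: it is a duality preserving dimension vectors (hence supports), it sends indecomposables to indecomposables with $D\,P_{\Lambda^{\mathrm{op}}}(j)=I(j)$, and it induces an isomorphism of translation quivers $\Gamma(\mod\Lambda^{\mathrm{op}})\xrightarrow{\sim}\Gamma(\mod\Lambda)^{\mathrm{op}}$ intertwining $\tau_{\Lambda^{\mathrm{op}}}$ with $\tau_\Lambda^{-1}$, so that $D(\tau^{-r}_{\Lambda^{\mathrm{op}}}P_{\Lambda^{\mathrm{op}}}(j))=\tau^{r}_\Lambda I(j)$. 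Since $D$ exchanges the two arguments of $\Ext^1$ and fixes supports, it restricts (by the description of the order in Subsection \ref{subsec:2.3}) to an anti-isomorphism of posets $\stilt(\Lambda^{\mathrm{op}})\to\stilt(\Lambda)$; and since $DN$ has $X$ as a direct summand if and only if $N$ has $DX$ as one, this further restricts to an anti-isomorphism $\stilt_{DX}(\Lambda^{\mathrm{op}})\to\stilt_X(\Lambda)$, where $DX$ is the (unique) sincere indecomposable $\Lambda^{\mathrm{op}}$-module.

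Next I would identify the combinatorial data. Applying $D$ to the two unique paths from $P_{\Lambda^{\mathrm{op}}}(1)$ and from $P_{\Lambda^{\mathrm{op}}}(n)$ to $DX$ in $\Gamma(\mod\Lambda^{\mathrm{op}})$ produces exactly the two unique paths from $X$ to $I(1)$ and from $X$ to $I(n)$ in $\Gamma(\mod\Lambda)$ that were fixed before the statement, and comparing $\tau$-exponents term by term shows that the integers produced by Lemma \ref{form of max for x} for $\Lambda^{\mathrm{op}}$ are precisely the $s_j$ occurring here. By that lemma, $\bigoplus_{j=1}^{n}\tau^{-s_j}_{\Lambda^{\mathrm{op}}}P_{\Lambda^{\mathrm{op}}}(j)$ is the maximum element of $\stilt_{DX}(\Lambda^{\mathrm{op}})$, and the anti-isomorphism above carries it to the minimum element of $\stilt_X(\Lambda)$, namely $D\bigl(\bigoplus_{j}\tau^{-s_j}_{\Lambda^{\mathrm{op}}}P_{\Lambda^{\mathrm{op}}}(j)\bigr)=\bigoplus_{j=1}^{n}\tau^{s_j}_\Lambda I(j)$, as claimed.

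The only genuine obstacle is the bookkeeping in the middle step: checking that $D$ carries the distinguished paths together with their $\tau$-exponents to the ones in the statement, and that the poset anti-isomorphism really restricts to the subposets of modules admitting $X$ (resp. $DX$) as a summand. Both are routine once the functoriality of $D$ is set up. One could equally avoid $\Lambda^{\mathrm{op}}$ and instead rerun the proof of Lemma \ref{form of max for x} verbatim with $P(j),\tau^{-1}$ and $\Xi$ replaced by $I(j),\tau$ and $\Xi^{-}:=\{\tau^{s}I(j)\mid s<s_j\}$, reversing all arrows and exchanging the roles of $\Hom$ and $\Ext^1$ via Auslander--Reiten duality; the duality argument above is shorter.
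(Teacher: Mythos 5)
Your approach is the same as the paper's: the paper offers no proof at all for this lemma, simply declaring it the ``dual result'' of Lemma~\ref{form of max for x}, and your write-up is exactly that duality argument made explicit via $D=\Hom_k(-,k)$ and $\Lambda^{\mathrm{op}}$. One intermediate claim, however, is false as stated: $D$ is \emph{not} an anti-isomorphism of the full posets $\stilt(\Lambda^{\mathrm{op}})\to\stilt(\Lambda)$. The order requires $\mathrm{supp}(M')\subset\mathrm{supp}(M)$ for $M\geq M'$, and since $D$ preserves supports this containment is not reversed; concretely, $D$ sends the maximum $\Lambda^{\mathrm{op}}$ to the injective cogenerator $D\Lambda^{\mathrm{op}}\neq 0$ rather than to the minimum $0$. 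What rescues the argument is precisely that $X$ (hence $DX$) is sincere: every element of $\stilt_{X}(\Lambda)$ and of $\stilt_{DX}(\Lambda^{\mathrm{op}})$ then has full support and is an honest tilting module, the order on these subposets is pure $\Ext^{1}$-vanishing, and $D$ carries cotilting to tilting (harmless here since $\Lambda$ is hereditary), so the restricted anti-isomorphism $\tilt_{DX}(\Lambda^{\mathrm{op}})\to\tilt_{X}(\Lambda)$ that you actually use does hold and swaps maximum with minimum. State that sincerity observation explicitly and drop the full-poset claim; with that repair the remaining bookkeeping on paths and $\tau$-exponents is fine, and your closing remark that one could instead rerun the proof of Lemma~\ref{form of max for x} dually is the other standard way to fill the gap the paper leaves.
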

 
We now prove Theorem\;\ref{interval conj for type a}
\begin{proof}
Let $X$ be a unique indecomposable sincere module, $T$ a maximum element of $\stilt_{X}(\Lambda)$ and $T'$ a minimum element of $\stilt_{X}(\Lambda)$.
Then there is a vertex $i$ of $Q$ and a non negative integer $r$ such that $X\simeq \tau^{-r}P(i)$. Then by Theorem\;\ref{stilting reduction}, we have that
 \[(\ast)\ \ \stilt_{X}(\Lambda)\simeq \stilt(\Lambda_{1}\times \Lambda_{2})=\stilt(\Lambda_{1})\times \stilt(\Lambda_{2}),\]
where $\Lambda_{1}=k(1\leftarrow 2\leftarrow\cdots \leftarrow i-1)$ and $\Lambda_{2}=k(i+1\rightarrow i+2\rightarrow\cdots \rightarrow n)$.

We first show that the assertion holds in the case $n\leq 5$.
It is easy to check in the case $n=1$. If $n=2,3$, then the assertion directly follows from Example\;\ref{example1} and Example\;\ref{exstilta3}.
We assume that $n=4$. Since $\stilt(\Lambda^{\mathrm{op}})\simeq \stilt(\Lambda)^{\mathrm{op}}$, we may assume that $1$ is source of $Q$.
Then $\Lambda/P(1)\in \stilt (\Lambda)$ and \[\{T\in \stilt(\Lambda)\mid T\leq \Lambda/P(1)\}=\{T\in \stilt(\Lambda)\mid 1\not\in \mathrm{supp}(T)\}\simeq
 \stilt(k(Q\setminus\{1\}))\cdots (\ast\ast).\]
 Therefore, for any $l'\in [3,6]$, there is a path from $\Lambda$ to $0$ in $\stilt(\Lambda)$ with length $l'+1$. Therefore for any $l\in[4,7]$, there is a maximal green sequence for $Q$ with length $l$.
 Lemma\;\ref{sliceisfull} and $(\ast)$ also implies that for any $l\in[9,10]$, there exists a maximal green sequence for $Q$ with length $l$. In fact,
we have that  
\[\frac{(i-1)i+(n-i)(n-i+1)}{2}-(n-1)\geq 1\ (n=4).\]
Hence, it is sufficient to show that there is a maximal green sequence for $Q$ with length $8$.
 Then we have that $\bigoplus_{i\in \mathrm{supp}(P(1))}I(i)$ is a minimum element of $\stilt_{P(1)}(\Lambda)$.
Since $\stilt_{P(1)}(\Lambda)\simeq\stilt(k(Q\setminus\{1\}))$, for any $l'\in[3,6]$, there is a path from $\Lambda$ to $\bigoplus_{i\in \mathrm{supp}(P(1))}I(i)$
with length $l'$.
We conclude that for any $l\in [3+\#\mathrm{supp}(P(1)),6+\#\mathrm{supp}(P(1))]$, there exists a maximal green sequence for $Q$ with length $l$.
Then the assertion follows from the fact that $2\leq \#\mathrm{supp}(P(1))\leq 4$.
Similarly, we can check that the assertion holds in the case $n=5$.  
We now prove the assertion for $n>5$ by using induction on $n$. 
Therefore we may assume that $n \geq 6$ and the assertion holds for any $k<n$.

Note that $(\ast\ast)$ and hypothesis of induction imply that for any $l\in [n,\frac{n(n-1)}{2}+1]$, there is a path from $\Lambda$ to $0$ in $\stilt(\Lambda)$ with length $l'+1$.
By hypothesis of induction, we see that for any $l'\in [n-1,\frac{(i-1)i}{2}+\frac{(n-i)(n-i+1)}{2}]$, there is a path from $T$ to $T'$ with length $l'$.
Hence Lemma\;\ref{sliceisfull} shows that for any \[l\in[\frac{n(n+1)}{2}-(\frac{(i-1)i+(n-i)(n-i+1)}{2}-(n-1)),\frac{n(n+1)}{2}],\] there exists a maximal green sequence for $Q$ with length $l$.
Therefore it is sufficient to show that 
\[\frac{n(n+1)}{2}-(\frac{(i-1)i+(n-i)(n-i+1)}{2}-(n-1))\leq \frac{(n-1)n}{2}+2.\]
If $n=2k$ $(k\geq 3)$ is  even, then we have
\[\begin{array}{lll}
\frac{n(n+1)}{2}-(\frac{(i-1)i+(n-i)(n-i+1)}{2}-(n-1)) &\leq & \frac{n(n+1)}{2}-(\frac{(k-1)k+(n-k)(n-k+1)}{2}-(n-1))\\
& =& k^{2}+3k-1 \\
& \leq & 2k^{2}-k+2=\frac{(n-1)n}{2}+2.
\end{array}
\]
If $n=2k+1$ $(k\geq 3)$ is odd, then we have
\[\begin{array}{lll}
\frac{n(n+1)}{2}-(\frac{(i-1)i+(n-i)(n-i+1)}{2}-(n-1)) &\leq & \frac{n(n+1)}{2}-(\frac{k(k+1)+(n-k-1)(n-k)}{2}-(n-1))\\
& =& k^{2}+4k+1 \\
& \leq & 2k^{2}+k+2=\frac{(n-1)n}{2}+2.
\end{array}
\]
 
\end{proof}
\section{Lengths of maximal green sequences for a quiver of type $\tilde{A}_{n,1}$}
In this section, we consider a path algebra $\Lambda=kQ$ of type $\tilde{A}_{n,1}$. 
\begin{lemma}
\label{having projective}
Let $\Lambda=T_{0}\rightarrow T_{1} \rightarrow\cdots \rightarrow T_{l}=0 $ be a path in $\stilt(\Lambda)$.
Assume that $T_{r-1}\in\tilt \Lambda$ and $T_{r}\notin \tilt \Lambda$ with $\mathrm{supp}(T_{r})=Q_{0}\setminus\{i\}$.
Then $i\neq n$ and $P(i+1)\in \add T_{r}$.
\end{lemma}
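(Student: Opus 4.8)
The plan is to reduce everything to the structure of the last tilting module $T_{r-1}$ on the path, and then to a case analysis on the deleted vertex $i$ using the classification of "reference'' support tilting pairs from Proposition~\ref{representative}. Two preliminary observations are immediate. First, along any path in the Hasse quiver of $\stilt(\Lambda)$ the supports are non‑increasing (if $M\to M'$ then $\mathrm{supp}(M')\subseteq\mathrm{supp}(M)$), so $\mathrm{supp}(T_0)=\cdots=\mathrm{supp}(T_{r-1})=Q_0$ and hence $T_0=\Lambda,\dots,T_{r-1}$ are honest tilting modules. Second, by the mutation theory of \cite{AIR} the arrow $T_{r-1}\to T_r$ is realised by $T_{r-1}=X\oplus T_r$ for a single indecomposable $X$, and since $T_{r-1}$ is sincere while $T_r$ is not sincere at $i$, this $X$ is the \emph{unique} indecomposable summand of $T_{r-1}$ whose support contains $i$.

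The heart of the argument — and the step I expect to be the main obstacle — is to show that each of $T_0,\dots,T_{r-1}$ is of \emph{pre‑projective type}, meaning it has at least two indecomposable pre‑projective summands and no pre‑injective summand. Here I would combine: (i) Remark~\ref{prin}, which forbids a support tilting module from containing both a pre‑projective and a pre‑injective summand; (ii) the fact that over the tame algebra $\Lambda=k\tilde A_{n,1}$ a rigid regular module has at most $n-1$ indecomposable summands (the non‑homogeneous tubes have ranks $n$ and $1$, and $\sum_j(\mathrm{rank}_j-1)=(n+1)-2$), so that every tilting module has at least two non‑regular summands, necessarily all of the same type by (i); and (iii) induction along the path: $T_0=\Lambda$ has all summands projective, hence is pre‑projective; and if $T_k$ is pre‑projective, then $T_k/X$ still has a pre‑projective summand for any summand $X$, so $T_{k+1}$ has a pre‑projective summand, hence no pre‑injective summand by (i), hence at least two pre‑projective summands by (ii). (The genuinely $\tilde A_{n,1}$‑specific content of the lemma lives entirely in this step.)

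Granting this, $i\neq n$ is quick: $n$ is the unique sink of $Q$, so every indecomposable pre‑projective $\Lambda$‑module has $n$ in its support — a projective $P(a)$ because of the path $a\to\cdots\to n$, and $\tau^{-m}P(a)$ with $m\ge 1$ because it is sincere by Remark~\ref{prin}. Thus $T_{r-1}$ has at least two summands supported at $n$; but $X$ is the only summand supported at $i$, so $i=n$ is impossible.

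It remains to show $P(i+1)\in\add T_r$, and for this I would re‑run the Existence part of the proof of Proposition~\ref{representative} for the pair $(T_r,P(i))$. Since $i\neq n$, $Q'=Q\setminus\{i\}$ is a connected quiver of type $A_n$ and $T_r$ is a tilting $kQ'$‑module. If $i=0$ then $Q'$ is the linear quiver $1\to\cdots\to n$, whose source projective $P_{Q'}(1)=P(1)$ is projective–injective and therefore a direct summand of every tilting module, so $P(1)=P(i+1)\in\add T_r$. If $0<i<n$ then $Q'$ is $(i-1)\leftarrow\cdots\leftarrow 0\to n\leftarrow\cdots\leftarrow(i+1)$, and the Ext‑computations used in that proof (based on Lemma~\ref{ev}, notably $\Ext^1_{kQ'}(I_{Q'}(i-1),P_{Q'}(i+1))\neq 0$ together with the two $\mathrm{ind}\,\Ker$ descriptions) force $T_r$ to have $P_{Q'}(i+1)$ or $I_{Q'}(i-1)$ as a summand. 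Finally one checks that $I_{Q'}(i-1)$ coincides with the $\Lambda$‑injective $I_\Lambda(i-1)$ — both are the module supported on $\{0,\dots,i-1\}$ with all multiplicities $1$ — hence is a pre‑injective $\Lambda$‑module; since $T_r\mid T_{r-1}$ and $T_{r-1}$ has no pre‑injective summand, $I_{Q'}(i-1)\notin\add T_r$, and therefore $P(i+1)=P_{Q'}(i+1)\in\add T_r$, as claimed.
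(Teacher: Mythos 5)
Your proof is correct, but it reaches the conclusion by a route that differs from the paper's in two essential places. The paper runs everything through Proposition~\ref{representative}: it first shows $T_r$ has no pre-injective summand (if a pre-injective summand first appeared at some $T_k$, then $T_{k-1}$, having none, would contain the two pre-projective modules coming from $\Tau^{-s}X_j$ with $s\geq 1$, so $T_k$ would retain one and violate Remark~\ref{prin}), and then pins down the representative $(s,j)$ with $\Tau^{s}T_r\in\stilt_{X_j}(\Lambda)$, using sincerity of $\tau^{<0}P(j')$ and of $\tau^{\leq 0}P(0)$ to force $s=0$ and $j=i$; this yields $i\neq n$ and $P(i+1)\in\add T_r$ in one stroke. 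You instead obtain the key structural fact --- that every $T_k$ with $k\leq r-1$ has at least two pre-projective and no pre-injective summands --- from a tube-rank count (a rigid regular module has at most $n-1$ indecomposable summands, so a tilting module has at least two non-regular summands, all of one type by Remark~\ref{prin}) together with induction along the path; you then get $i\neq n$ from the observation that every indecomposable pre-projective module is supported at the sink $n$, and you get $P(i+1)\in\add T_r$ by re-running only the existence half of Proposition~\ref{representative} (the $P(i+1)$-or-$I(i-1)$ dichotomy coming from Lemma~\ref{ev}) and discarding $I(i-1)$ because it is pre-injective while $T_r$ divides $T_{r-1}$. Both arguments are sound: the paper's is shorter because it reuses the uniqueness machinery already established, while yours avoids the $(s,j)$ bookkeeping but imports one standard fact that the paper never states --- the bound $n-1$ on rigid regular summands over $k\tilde{A}_{n,1}$ --- which is true and available in \cite{R}, but which you should cite explicitly since it is the only genuinely type-specific input of your first step.
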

\begin{proof}
We first shows that $T_{r}$ has no non-zero pre-injective direct summand. If $T_{r}$ has non-zero pre-injective direct summand,
then there is an integer $k\in\{1,2,\dots,r-1\}$ such that $T_{k-1}$ has no non-zero pre-injective direct summand and $T_{k}$
has non-zero pre-injective direct summand $I$. By Proposition\;\ref{representative}, we have that $T_{k-1}$ has at least two pre-projective
indecomposable direct summands. Therefore $T_{k}$ has non-zero pre-projective direct summand $P$. We conclude that $T_{k}$ has both a non-zero pre-projective direct summand $P$ and a pre-injective direct summand $I$.
This is a contradiction. In particular, $T_{r}$ has no non-zero pre-injective direct summand. Now Proposition\;\ref{representative} implies that there is a pair $(s,j)\in Z_{\geq 0}\times Q_{0}$ 
such that $\Tau^{s} T_{r}\in \stilt_{X_{j}}\Lambda$.
Suppose that $i=n$. Then, by definition of $r$, we have $s>0$. Thus $\tau^{-s+1}P(0)\oplus \tau^{-s+1}P(n)\in \add T_{r}$. Since $\tau^{\leq 0}P(0)$ is sincere (see Remark\;\ref{prin}), we reach a contradiction.
Therefore $i\neq n$. If $s>0$, then $\tau^{-s}P(j)\oplus \tau^{-s+1}P(j+1)\in \add T_{r}$. Since $\tau^{<0} P(j') $ is sincere for any $j'\in Q_{0}$, we reach a contradiction. Therefore we have that $s=0$ and $j=i$.   
\end{proof}
\begin{proposition}
Let $\Lambda=T_{0}\rightarrow T_{1} \rightarrow\cdots \rightarrow T_{l}=0 $ be a path in $\stilt(\Lambda)$.
Then we have \[l\leq \frac{n(n+3)}{2}=n+\frac{n(n+1)}{2}\]
\end{proposition}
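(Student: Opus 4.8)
Write $(T_m,P_m)$ for the support tilting pair represented by $T_m$. The first point is purely combinatorial: the function $m\mapsto\mathrm{supp}(T_m)$ is weakly decreasing along the path, and each Hasse edge $T_{m-1}\to T_m$ is of exactly one of two kinds. Either it exchanges one indecomposable summand of the module part for another indecomposable (then $|T_m|=|\mathrm{supp}(T_m)|$ is unchanged, so the support is unchanged), or it deletes one module summand and enlarges the projective part (then $|\mathrm{supp}(T_m)|$ drops by exactly one). Since $|\mathrm{supp}(T_0)|=|Q_0|=n+1$ and $|\mathrm{supp}(T_l)|=0$, exactly $n+1$ of the $l$ edges are of the second, ``support–drop'', kind.

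Next I split the path at the first support–drop. Let $r$ be the index with $T_0,\dots,T_{r-1}$ tilting (full support) and $T_r$ of proper support, so that $T_{r-1}\to T_r$ is the first drop. By Lemma~\ref{having projective} we have $\mathrm{supp}(T_r)=Q_0\setminus\{i\}$ with $i\neq n$ and $P(i+1)\in\add T_r$. Deleting the vertex $i\neq n$ from the cycle underlying $\tilde A_{n,1}$ leaves a connected quiver of type $A_n$, and since the support can only keep shrinking after step $r$, each $T_m$ with $m\ge r$ is a support tilting module over $k(Q|_{Q_0\setminus\{i\}})$; thus $T_r\to\dots\to T_l=0$ is a path in $\stilt(k(Q|_{Q_0\setminus\{i\}}))$. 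Prepending to it a maximal chain from the maximum $k(Q|_{Q_0\setminus\{i\}})$ down to $T_r$ yields a maximal green sequence for a type $A_n$ quiver, so Theorem~\ref{interval conj for type a} gives
\[
l-r\ \le\ \frac{n(n+1)}{2}.
\]

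It remains to bound $r\le n$, and this is the main obstacle. The idea is that the initial segment $\Lambda=T_0\to\dots\to T_{r-1}\to T_r$ can contain only few exchange edges. First, a projective indecomposable that has been deleted along a descending Hasse path is never re‑introduced, so $P(i+1)\in\add T_m$ for all $m\le r$; since the Bongartz completion of $P(i+1)$ is $\Lambda$ itself, Theorems~\ref{reduction theorem} and \ref{stilting reduction} identify $\stilt_{P(i+1)}(\Lambda)$ with $\stilt(k(Q|_{Q_0\setminus\{i+1\}}))$, again of type $A_n$, and carry $T_0\to\dots\to T_r$ to an initial segment of a path in a type $A_n$ support tilting poset. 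Second, by Remark~\ref{prin} no support tilting module of $\Lambda$ has both a non‑zero pre‑projective and a non‑zero pre‑injective summand; since $T_0=\Lambda$ is pre‑projective and a single exchange changes only one summand, every $T_m$ with $m\le r-1$ has only pre‑projective and regular indecomposable summands, and (for $n\ge 2$) the unique non‑homogeneous tube, of rank $n$, contributes at most $n-1$ summands to any tilting module. Combining these constraints with the concrete shape of $\tilde A_{n,1}$ — its unique source $0$, unique sink $n$, and the sincerity of $\tau^{-r}P(j)$ for all $j$ and all $r\ge 1$ (Remark~\ref{prin}), together with the normal form of Proposition~\ref{representative} — one checks that a descending chain of tilting modules of $\tilde A_{n,1}$ that starts at $\Lambda$, ends with a support–drop, and extends to a path reaching $0$, has at most $n$ edges; equivalently $r\le n$. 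The delicate point here is that too long an exchange prefix makes the module ``stuck'', in the sense that every vertex of its support is covered by at least two indecomposable summands, and from such a module $0$ can no longer be reached; quantifying exactly how long the prefix may be is where the bulk of the work lies.

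Putting the two estimates together gives
\[
l\ =\ r\ +\ (l-r)\ \le\ n\ +\ \frac{n(n+1)}{2}\ =\ \frac{n(n+3)}{2},
\]
as claimed.
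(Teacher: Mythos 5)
Your treatment of the tail is fine: after the first support drop every $T_m$ is supported on $Q_0\setminus\{i\}$, the segment $T_r\to\cdots\to T_l=0$ extends to a path from the maximum to $0$ in $\stilt(k(Q|_{Q_0\setminus\{i\}}))$, which is a maximal green sequence for a type $A_n$ quiver, so Theorem \ref{interval conj for type a} gives $l-r\le\frac{n(n+1)}{2}$. The problem is the other half. The inequality $r\le n$ is exactly the step you defer ("one checks\dots", "where the bulk of the work lies"), so no proof is actually given; worse, the inequality is false. Take $n=2$, so $Q$ is $0\to1\to2$, $0\to2$, and let $S(1)$ denote the regular simple at vertex $1$, so that $\dimvec\tau S(1)=(1,0,1)$ and $\dimvec\tau^{-1}P(2)=(1,2,2)$. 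A direct check (using $\Hom_{\Lambda}(P(1),\tau S(1))=0$, $\Hom_{\Lambda}(\tau S(1),P(2))=0$ and the Euler form) shows that $P(1)\oplus S(1)\oplus\tau^{-1}P(2)$ is sincere and rigid, hence a tilting module, and it shares only $P(1)$ with $\Lambda$. Consequently
\[
\Lambda\ \to\ P(0)\oplus P(1)\oplus\tau^{-1}P(2)\ \to\ P(1)\oplus S(1)\oplus\tau^{-1}P(2)\ \to\ P(1)\oplus S(1)\ \to\ S(1)\ \to\ 0
\]
is a path in $\stilt(\Lambda)$ of length $5=\frac{n(n+3)}{2}$ (each consecutive pair shares an almost complete summand, so each arrow is a Hasse arrow) whose first three vertices are tilting modules; here $r=3>n=2$. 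None of the constraints you list (persistence of $P(i+1)$, absence of pre-injective summands, the rank-$n$ tube) rules this out, and since every Hasse path from $\Lambda$ to the tilting completion of $P(1)\oplus S(1)$ must change at least two summands, $r\ge 3$ is forced for any path through $P(1)\oplus S(1)$.

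What is true is only a trade-off: a long tilting prefix forces a short tail, and a position-wise split of the path cannot see this. The paper's proof captures the trade-off by a global count: $l$ equals the number of distinct indecomposables occurring as summands along the whole path, all of which lie in $\Ker\Ext^1_{\Lambda}(-,T_{r-1})\cup\mathrm{Fac}\,T_{r}$, and this set is counted by separating indecomposables according to whether they are supported at the dropped vertex $i$. Those not supported at $i$ number at most $\frac{n(n+1)}{2}$ (all indecomposables of an $A_n$ quiver) --- this is the analogue of your tail estimate --- while those supported at $i$ must (using $P(i+1)\in\add T_{r-1}$ from Lemma \ref{having projective}) avoid vertex $i+1$, which caps them at $n$, and one of these is excluded by a separate argument, giving $n-1+\frac{n(n+1)}{2}+1$. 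To rescue your strategy you would need an inequality of the form $r\le n+\bigl(\frac{n(n+1)}{2}-(l-r)\bigr)$, which is the paper's count in disguise rather than a statement about $r$ alone.
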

\begin{proof}
We may assume that $T_{r-1}\in \tilt \Lambda$ and $T_{r}\not\in\tilt \Lambda$.
Then there is a unique vertex $i$ of $Q$ such that $\Hom_{\Lambda}(P(i),T_{r})=0$, or equivalently $(\dimvec\;T_{r})_{i}=0.$
By Lemma\;\ref{having projective}, $T_{r}$ has an indecomposable direct summand $P(i+1)$.
Note that $\add\bigoplus_{k=0}^{r-1}T_{k}\subset \Ker\Ext_{\Lambda}^{1}(-,T_{r-1})$ and $\add\bigoplus_{k=r}^{l}T_{k}\subset \mathrm{Fac}\;T_{r}$.
Therefore we have \[l=\#\ind \add \bigoplus_{k=0}^{l}T_{k}\leq\#\ind\Ker\Ext_{\Lambda}^{1}(-,T_{r-1})\cup \mathrm{Fac}\;T_{r}.\]
If $\Ext_{\Lambda}^{1}(X,T_{r-1})=0$ and $X\in \mathrm{Fac}\;T_{r}$, then $X\in \add T$. In particular, we have
\[\#\ind\Ker\Ext_{\Lambda}^{1}(-,T_{r-1})\cup \mathrm{Fac}\;T_{r}=\#\ind\Ker\Ext_{\Lambda}^{1}(-,T_{r})+ \#\ind\mathrm{Fac}\;T_{r}-n.\]
By Auslander-Reiten duality, we have
\[\Ext_{\Lambda}^{1}(X,T_{r-1})=0\Leftrightarrow \Hom_{\Lambda}(T_{r-1},\tau X)=0.\]
Hence we have
\[\#\ind\Ker\Ext_{\Lambda}^{1}(-,T_{r-1})-n-1\leq \#\ind\Ker \Hom_{\Lambda}(T_{r-1},-).\]
We conclude that
\[l\leq \#\ind\Ker \Hom_{\Lambda}(T_{r-1},-)+\#\ind\mathrm{Fac}\;T_{r}+1.\]
Now we let $\chi:=\{X\in \Ker \Hom_{\Lambda}(T_{r-1},-)\mid \Hom_{\Lambda}(P(i),X)\neq 0 \}.$
We claim that $\#\ind \chi\leq n-1$. Let $\bar{\chi}:=\{X\in \mod \Lambda\mid \Hom_{\Lambda}(P(i+1),X)=0, \Hom_{\Lambda}(P(i),X)\neq 0 \}$.
We note that $I(i)\in \bar{\chi}$ and $\#\ind \chi\leq n$.
Since $P(i+1)\in\add T_{r}$, we have $P(i+1)\in \add T_{r-1}$. 
   In particular, $\chi\subset\bar{\chi}$.
Therefore it is sufficient to show that $ \chi\neq \bar{\chi}$.
Since $T_{r-1}\in \tilt(\Lambda)$, there is an indecomposable pre-projective direct summand $P=\tau^{-r}P(j)\not\simeq P(i+1) $ of $T_{r-1}$.
If $r>0$, then $\tau^{r}I(i)$ is sincere and we have 
\[\Hom_{\Lambda}(P,I(i))\simeq\Hom_{\Lambda}(P(j),\tau^{r}I(i))\neq 0.\]
 Thus $I(i)\not\in \chi$. Therefore we may assume that $P=P(j)$. Let $X$ be a unique sincere indecomposable $k(Q\setminus\{i+1\})$-module.
 Then we can regard $X$ as an indecomposable $\Lambda$-module. Since $j\neq i+1$, we have 
 \[\Hom_{\Lambda}(P(j),X)\neq 0.\] Therefore we have $X\not\in \chi$. We conclude that $\chi\neq \bar{\chi}$.
 
 Now we have 
\[\begin{array}{lll}
l&\leq &\#\ind \chi+\#\ind(\Ker \Hom_{\Lambda}(T_{r-1},-)\cup\mathrm{Fac}\;T_{r})\cap \Ker\Hom_{\Lambda}(P(i),-)+1\\
&\leq & n-1+\frac{n(n+1)}{2}+1=\frac{n(n+3)}{2}.
\end{array}\] 
\end{proof}
\begin{theorem} Let $Q=Q^{(n)}$ be a quiver of type $\tilde{A}_{n,1}$. Then we have
\[\{l\in \Z_{\geq 0}\mid \mathbf{green}_{l}(Q)\neq \emptyset\}=[n+1,\frac{n(n+3)}{2}].\]
\end{theorem}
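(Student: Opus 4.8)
The plan is to pass to the Hasse quiver of $\stilt(\Lambda)$, where by the results of Subsection~1.1 together with $\sttilt(\Lambda)=\stilt(\Lambda)$ the set $\mathbf{green}_l(Q)$ is identified with the set of directed paths of length $l$ from $\Lambda$ to $0$. It then suffices to bound such lengths from above and below and to realize every length in between. The upper bound $l\le\frac{n(n+3)}{2}$ is exactly the Proposition proved just above. For the lower bound, note that along a downward Hasse edge $M\to M'$ the support $\tau$-tilting mutation either replaces one indecomposable summand by another or deletes one, so $|M'|\in\{|M|,|M|-1\}$; since $|\Lambda|=n+1$ and $|0|=0$, any directed path $\Lambda\to\cdots\to0$ has at least $n+1$ edges, so $l\ge n+1$.

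For the realization I would use Theorem~\ref{interval conj for type a} together with $n$ "reduction slots". For $k\in\{0,1,\dots,n-1\}$ put $Q^{(k)}:=Q\setminus\{k\}$; deleting one vertex from the $(n+1)$-cycle underlying $Q$ leaves a path, so $Q^{(k)}$ is of type $A_n$ and $\Lambda/\Lambda e_k\Lambda\cong kQ^{(k)}$. Exactly as in the proof of Theorem~\ref{interval conj for type a} (the step "$(\ast\ast)$" there, valid here since $k\in\mathrm{supp}(T)$ forces $k\in\mathrm{supp}(T')$ for any $T'\geq T$), the set $\{T\in\stilt(\Lambda):\mathrm{supp}(T)\subseteq Q_0\setminus\{k\}\}$ is an order ideal of $\stilt(\Lambda)$ whose induced Hasse quiver coincides with that of $\stilt(kQ^{(k)})$; let $W_k\in\stilt(\Lambda)$ be its maximum (the image of $kQ^{(k)}$). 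The heart of the argument is to produce, for each such $k$, a directed path
\[\Lambda=V_0\to V_1\to\cdots\to V_k\to W_k\]
in the Hasse quiver of $\stilt(\Lambda)$ whose intermediate terms $V_0,\dots,V_k$ are (sincere) tilting modules and whose last arrow deletes the summand $P(0)$, so that $V_k=W_k\oplus P(0)$. Granting this, concatenating with a directed path $W_k\to\cdots\to0$ inside the $A_n$-ideal below $W_k$ — which by Theorem~\ref{interval conj for type a} exists of every length in $[n,\frac{n(n+1)}{2}]$ — gives a maximal green sequence of every length in $(k+1)+[n,\frac{n(n+1)}{2}]$. As $k$ runs over $0,\dots,n-1$ these ranges overlap (their left endpoints differ by $1$ and each has length $\ge n\ge 1$) and their union is $[n+1,\,n+\frac{n(n+1)}{2}]=[n+1,\frac{n(n+3)}{2}]$, which with the two bounds gives the stated equality.

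The main obstacle is the construction and verification of these chains; this is where the real work lies. Concretely one takes $V_j$ to be obtained from $V_{j-1}$ by the mutation exchanging $P(k-j+1)$ for the indecomposable interval module with support $\{k-j,\dots,k-1\}$ when $1\le j\le k-1$, and $V_k$ obtained from $V_{k-1}$ by exchanging $P(1)$ for the indecomposable with support $\{0,1,\dots,k-1,n\}$ and top $S(0)$ (for $k=0$ the path is simply $\Lambda\to\Lambda/P(0)=W_0$). One must then check that each $V_j$ really is a tilting module and each step a \emph{downward} Hasse edge. Using the Auslander--Reiten formula $\Ext^1_\Lambda(X,Y)\cong\mathrm{D}\Hom_\Lambda(Y,\tau X)$ (valid without the stable correction since $\Lambda$ is hereditary, as already used in the proof of Lemma~\ref{sliceisfull}), both reduce to computing the Auslander--Reiten translates of the few modules involved. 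Reading off two-term projective presentations, these translates come out to be again interval-type modules whose supports avoid the vertex $0$ and the vertices $k+1,\dots,n$; since $\Hom_\Lambda(P(l),N)\cong Ne_l$, this gives $\Ext^1_\Lambda(-,P(l))=0$ for every projective summand $P(l)$ still present in $V_j$ (the $\Ext^1$-vanishing among the new interval summands themselves being Lemma~\ref{ev} read inside $kQ^{(k)}$), so each $V_j$ is indeed $\tau$-rigid of the right rank. The same computation yields $\Ext^1_\Lambda(V_j,P(k-j+1))\ne0$, i.e.\ $P(k-j+1)\notin\mathrm{Fac}\,V_j$; as the newly introduced summand is visibly a proper factor module of a surviving summand of $V_{j-1}$, this forces $\mathrm{Fac}\,V_j\subsetneq\mathrm{Fac}\,V_{j-1}$, so each step is downward, and the final deletion of the sincere module $P(0)$ drops the support precisely to $Q_0\setminus\{k\}$ and lands on $W_k$ (matching Lemma~\ref{having projective}). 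The remaining content is just the bookkeeping of these finitely many, $n$-independent, Auslander--Reiten translates and $\Ext$-pairs.
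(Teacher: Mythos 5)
Your proposal is correct, and its skeleton matches the paper's: the upper bound is the preceding Proposition, and every length in $[n+1,\frac{n(n+3)}{2}]$ is realized by gluing a segment of fixed length $c$ onto a variable-length segment ranging over $[n,\frac{n(n+1)}{2}]$ supplied by Theorem~\ref{interval conj for type a} applied to a type $A_n$ subposet, with $c$ running over $1,\dots,n$. But you assemble the concatenation in the opposite order from the paper. The paper works inside $\stilt_{P(i)}(\Lambda)$ for $i=1,\dots,n$, which by Theorem~\ref{stilting reduction} is the support tilting poset of a type $A_n$ path algebra with top $\Lambda$ and bottom $I_i$, and then observes that below $I_i$ the Hasse quiver of $\stilt(\Lambda)$ is simply a chain of length $n-i+1$ down to $0$; so the variable piece comes first and the fixed tail costs nothing to exhibit. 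You instead place the type $A_n$ poset at the bottom, as the order ideal $\{T:\mathrm{supp}(T)\subseteq Q_0\setminus\{k\}\}$ with top $W_k$, and prepend an explicit chain $\Lambda=V_0\to\cdots\to V_k\to W_k$ of length $k+1$; here the fixed head carries all the work. Your chain is the right one: trading $P(k-j+1)$ for the indecomposable with support $[k-j,k-1]$, and finally $P(1)$ for the cokernel of $P(k)\hookrightarrow P(0)$, and the presentations $0\to P(k)\to P(k-j)\to L\to 0$ make the rigidity of each $V_j$, the orientation of each Hasse arrow, and the identification $V_k/P(0)=W_k$ go through exactly as you indicate (I checked the Ext computations; they work, and your new summands and $W_k$ really are the projectives of $k(Q\setminus\{k\})$ viewed as $\Lambda$-modules). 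The trade-off: the paper's fixed tail requires no mutation to be verified, whereas your fixed head requires the $k+1$ explicit checks you defer to bookkeeping; in exchange your route replaces the appeal to the reduction theorem over the $\tilde{A}_{n,1}$ algebra by the elementary order-ideal identification, and it makes the lower bound $l\ge n+1$ explicit via the summand count, a point the paper leaves implicit.
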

\begin{proof}
Let $1\leq i\leq n$. We consider $\stilt_{P(i)}(\Lambda)\simeq \stilt(kQ\setminus\{i\})$. Note that $\stilt_{P(i)}(\Lambda)$ has a maximum element $\Lambda$ and a minimum element $I_{i}$, 
where $I_{i}$ is a injective tilting module of $k(i\rightarrow \cdots\rightarrow n)$.\;($I_{i}$ is not injective as a $\Lambda$-module)
Then we can check that there is a unique path  from $I_{i}$ to  $0$ and its length is equal to $n-i+1$. Since $\stilt_{P(i)}(\Lambda)\simeq \stilt(kQ\setminus\{i\})$
, for any maximal green sequence $\mathbf{i}$ of $Q\setminus\{i\}$, there is a path from $\Lambda$ to $I_{i}$ whose length is equal to the length of $\mathbf{i}$. Therefore Theorem\;\ref{interval conj for type a}
implies that for any $l'\in [n,\dots,\frac{n(n+1)}{2}]$,
there exists a path from $\Lambda$ to $I_{i}$ in $\stilt(\Lambda)$ with length $l'$. Hence, for any $l\in\{n+n-i+1,\dots,\frac{n(n+1)}{2}+n-i+1\}$, there exists a maximal green sequence
with length $l$. 
\end{proof}
\begin{remark} Under Conjecture\;\ref{interval conj}, T.~Br$\ddot{\mathrm{u}}$stle, G.~Dupont and M.~P$\acute{\mathrm{e}}$rotin
already calculated $\mathrm{max}\{l\in Z_{\geq 0}\mid \mathbf{green}_{l}(Q^{(n)})\neq \emptyset \}$
for $n<8$. 
\end{remark}


\begin{thebibliography}{99}
\bibitem[AIR]{AIR}
 T. Adachi, O. Iyama, I. Reiten,
$\tau$-tilting theory.
{\it Compos. Math.} {\bf 150}, no. 3, 415--452 (2014).



\bibitem[ASS]{ASS}
 I.~Assem, D.~Simson and A.~Skowro\'{n}ski,
 Elements of the representation theory of associative algebras. Vol.~\textbf{1}.
  London Mathematical Society Student Texts \textbf{65}, Cambridge University Press (2006).
  
\bibitem[ARS]{ARS}
 M.~Auslander, I.~Reiten and S.~Smal\o ,
 Representation Theory of Artin Algebras.
Cambridge studies in advanced mathematics \textbf{36}, Cambridge University Press (1995).



\bibitem[BB]{BB}
S. Brenner; M.C.R. Butler,
Generalizations of the Bernstein-Gelfand-Ponomarev reflection functors.
Representation theory, II (Proc. Second Internat. Conf., Carleton Univ., Ottawa, Ont., 1979), pp.103-169,
Lecture Notes in Math., \textbf{832}, Springer, Berlin-New York (1980).
\bibitem[BDP]{BDP}
 T.~Br$\ddot{\mathrm{u}}$stle, G.~Dupont, M.~P$\acute{\mathrm{e}}$rotin,
On maximal green sequences. 
{\it Int. Math. Res. Not. IMRN} 2014, no. 16, 4547-4586. 
\bibitem[FZ]{FZ}
S. Fomin, A. Zelevinsky, 
Cluster algebras. I. Foundations. {\it J. Amer. Math .Soc}. \textbf{15} (2002), no. 2, 497-529. 


\bibitem[H]{H} 
D. Happel, 
Triangulated categories in the representation theory of finite-dimensional algebras.
London Mathematical Society Lecture Note Series, {\bf 119}. 
Cambridge University Press, Cambridge (1988).

\bibitem[HR]{HR}
D. Happel, C.M. Ringel,
 Construction of tilted algebras. Representations of Algebras, 125-143, Lecture Notes in Mathematics \textbf{903}, Springer-Verlag, Belrin-New York (1981).

\bibitem[HU1]{HU1}
 D. Happel, L. Unger,
On a partial order of tilting modules.
{\it Algebr. Represent. Theory} {\bf 8}, no. 2, 147--156 (2005).

\bibitem[HU2]{HU2}
 D. Happel, L. Unger,
On the quiver of tilting modules.
{\it J. Algebra} {\bf 284}, no. 2, 857--868 (2005).

 D.~Happel, L.~Unger,
 Reconstruction of path algebras from their posets of tilting modules.
 {\it Trans. Amer . Math. Soc} {\bf 361}, no.7, 3633-3660 (2009).
 

\bibitem[IT]{IT}
 C. Ingalls, H. Thomas,
Noncrossing partitions and representations of quivers.
{\it Compos. Math.} {\bf 145}, no. 6, 1533-1562 (2009).



\bibitem[J]{J}
 G. Jasso,
Reduction of $\tau$-tilting modules and torsion pairs.
 arxiv:1302.2709

\bibitem[Ke]{Ke}
B. Keller, On cluster theory and quantum dilogarithm identities. arXiv:1102.4148v4, 2011

\bibitem[KT]{KT}
O. Kerner, M. Takane, Mono orbits, epi orbits and elementary vertices of representation infinite quivers, 
{\it Comm. Algebra} {\bf 25}, no. 1, 51--77 (1997).

\bibitem[R]{R}
C.M. Ringel, Tame Algebras and Integral Quadratic Forms, Lecture Notes in Mathematics \textbf{1099}, Springer-Verlag, Berlin (1984). 

\bibitem[RS]{RS}
 C. Riedtmann, A. Schofield,
On a simplicial complex associated with tilting modules.
{\it Comment. Math. Helv.} {\bf 66}, no. 1, 70--78 (1991).




\end{thebibliography}
\end{document}